\numberwithin{equation}{section}
\renewcommand\vec{\bm}
\newcommand{\n}[1]{\|{#1}\|}
\newtheorem{theorem}{Theorem}[section]
\newtheorem{lemma}[theorem]{Lemma}
\newtheorem{Proposition}[theorem]{Proposition}
\newtheorem{Conjecture}[theorem]{Conjecture}
\newtheorem{Corollary}[theorem]{Corollary}
\theoremstyle{definition}
\theoremstyle{remark}
\def\ZZ{\mathbb{Z}}
\def\RR{\mathbb{R}}
\def\NN{\mathbb{N}}
\def\FF{\mathbb{F}}
\def\TT{\mathbb{T}}
\def\supp{\rm supp}
\def\L{\mathcal{L}}
\def\A{\mathcal{A}}
\def\M{\mathrm{Mat}}
\title{On Commuting pairs in Arbitrary sets of $2\times 2$ matrices}
\author{Akshat Mudgal}
\address{Mathematics Institute, Zeeman Building, University of Warwick, Coventry CV4 7AL, UK}
\email{Akshat.Mudgal@warwick.ac.uk}
 \subjclass[2020]{11B30,  11D45,  15B36} 
\keywords{Commuting matrices,  Szemer\'{e}di--Trotter theorem,  Sum--product phenomenon, Growth in groups}
\renewcommand\vec{\bm}
\begin{document}

\def\TT{\mathbb{T}}
\def\RR{\mathbb R}
\def\d{\,\mathrm d}
\def\ZZ{\mathbb{Z}}
\def\cS{\mathcal{S}}
\def\cY{\mathcal{Y}}
\def\cB{\mathcal{B}}

\maketitle

% \vspace{-4mm}
\begin{abstract}
Let $\textrm{Mat}_2(\mathbb{R})$ be the set of $2 \times 2$ matrices with real entries. For any $\varepsilon>0$ and any finitely--supported probability measure $\mu$ on $\textrm{Mat}_2(\RR)$, we prove that either
\[ T(\mu) =  \sum_{X, Y \in {\rm supp}(\mu), XY = YX} \mu(X) \mu(Y) < \varepsilon \]
or there exists some finite set $\mathcal{S}$ contained in a $2$-dimensional subspace of $\textrm{Mat}_2(\mathbb{R})$ such that $\mu(\mathcal{S}) \geq \varepsilon/8$. This is sharp up to the multiplicative constant. We prove quantitatively stronger results when 
% $\mu$ has product-type structure, that is, when 
\[ \mu \bigg( \begin{pmatrix}
    a_1 & a_2\\
    a_3 & a_4
\end{pmatrix} \bigg) = \nu(a_1) \dots \nu(a_4) \ \  \text{for every} \ a_1, \dots, a_4 \in \RR, \]
 with $\nu$ being some finitely--supported probability measure on $\mathbb{R}$. For instance, when $\mathcal{A} \subset \mathbb{R}$ is a generalised arithmetic progression or multiplicative progression of dimension $d$ and $\nu = \mathds{1}_{\mathcal{A}}/|\A|$, our techniques imply that $|\A|^{-3} \ll_d T(\mu) \ll_d |\A|^{-3}$. Our methods highlight the connections of this problem to results in incidence geometry, growth in groups phenomenon as well as Bourgain--Chang type sum-product estimates over $\mathbb{R}$. 
 The latter includes applications of Schmidt's subspace theorem and the resolution of the weak polynomial Freiman--Ruzsa conjecture over integers. 
\end{abstract}

\section{Introduction}

Given a finite group $G$, the question of estimating the number $\mathfrak{C}(G)$ of \emph{commuting pairs} in $G\times G$, that is the number of pairs $(g,h) \in G \times G$ such that $gh=hg$, has had a rich history. Erd\H{o}s--Tur\'{a}n \cite{ET1968} seemed to have been the first to analyse this question for arbitrary finite groups as well as the cases when $G$ is the symmetric group $S_n$. Much work has since been done on this topic, see, for example, a note by Gustafson \cite{Gu1973} proving that $\mathfrak{C}(G) \leq 5|G|^2/8$ for all finite, non-abelian group $G$, or work of Neumann \cite{Ne1989} characterising all finite groups $G$ satisfying $\mathfrak{C}(G) \geq \delta |G|^2$ for any fixed $\delta >0$, or work of Eberhard \cite{Eb2015} proving that the set $\{ |G|^{-2} \mathfrak{C}(G)  :  G \ \text{a finite group}\}$ is well-ordered and all its limit points are rational.

% We refer the reader to \cite{GG2006} for further details and references concerning this topic.

These type of problems have also been analysed when $G$ is replaced by a suitable set of matrices. For example, given $d \in \mathbb{N}$, Feit--Fine \cite{FF1960} proved an exact formula for the number of commuting pairs of $d \times d$ matrices with elements arising from some finite field. More recently, for every $d \geq 3$, Browning--Sawin--Wang \cite{BSW2024} proved that the number of commuting pairs of $d \times d$ matrices with entries lying in $[-N,N]\cap \mathbb{Z}$ is $O_d(N^{d^2 + 2 - \frac{2}{d+1}})$. When $d=2$, the divisor function estimate implies that the number of such pairs is $O_{\varepsilon}(N^{5 + \varepsilon})$ for every $\varepsilon >0$.

There are two natural generalisations of this line of investigation, first, one might consider estimates for the number of commuting pairs $(A,B) \in \mathscr{A}^2$ where $\mathscr{A}$ is the set of all $d \times d$ matrices with entries arising from an arbitrary finite set $\mathcal{A} \subset \RR$. Secondly, given an arbitrary set $\mathscr{A}$ of $d \times d$ matrices with real entries, one might be interested in obtaining a structure theorem about when $\mathscr{A}$ has many commuting pairs.  In this paper, we investigate these two questions when $d=2$ and highlight their connections to topics in incidence geometry, sum-product phenomenon and growth in groups.
% and analyse these using ideas from incidence geometry, growth in groups phenomenon and results surrounding the recent progress on Bourgain--Chang type sum-product estimates.

% Given $d \in \mathbb{N}$ and a finite set $\mathscr{A}$ of $d \times d$ matrices with real entries, we are interested in estimating the number of \emph{commuting pairs} $(A,B) \in \mathscr{A}^2$, that is, pairs $(A,B)$ satisfying $AB = BA$. In the specific setting when $d\geq 3$ and $\mathscr{A}$ is the set of all $d \times d$ matrices with entries lying in $[-N,N]\cap \ZZ$, Browning--Sawin--Wang \cite{BSW2024} have recently shown that the number of such pairs is $O_d(N^{d^2 + 2 - \frac{2}{d+1}})$, see also some classical work of .
% Moreover, when $d=2$, a standard application of the divisor function estimate implies that the number of such pairs is $O_{\varepsilon}(N^{5 + \varepsilon})$ for every $\varepsilon >0$. 

In order to set the scene, let $\textrm{Mat}_2(\mathbb{R})$ be the set of $2 \times 2$ matrices with real entries, let $\mu$ be a finitely-supported probability measure on $\M_2(\RR)$ and let ${\rm supp}(\mu)$ be the support of $\mu$. We are interested in studying 
\[ T(\mu) = \sum_{X, Y \in {\rm supp}(\mu)} \mu(X) \mu(Y) \mathds{1}_{XY = YX}. \]
% Our first result shows that whenever $T(\mu)$ is large, then $\mu$ should have a positive fraction of its mass concentrated on a ``lower-dimensional" subset of $\M_2(\RR)$.
% Thus, let $\pi : \M_2(\RR) \to \RR^4$ be the map satisfying $\pi( (a_{i,j})_{1 \leq i,j \leq 2}) = (a_{11}, a_{12}, a_{21}, a_{22})$ for every $(a_{i,j})_{1 \leq i,j \leq 2} \in \M_2(\RR)$.
Given some finite, non-empty set $\mathscr{A} \subset \M_2(\RR)$, denote $\dim(\mathscr{A})$ to be the dimension of the additive vector space generated by elements of $\mathscr{A}$ over $\RR$. This allows us to define 
\[ \delta(\mu) = \max_{\substack{ \mathcal{S'} \subseteq {\rm supp}(\mu) , \\  \dim(\mathcal{S}') \leq 2
  }} \mu(\mathcal{S}').\]
  With this notation in hand, we state our first result.

\begin{theorem} \label{t1}
    Let  $\mu$ be a finitely-supported probability measure on $\M_2(\RR)$. Then 
    \begin{equation} \label{ub1}
        T(\mu) \leq 8 \ \delta(\mu). 
    \end{equation} 
\end{theorem}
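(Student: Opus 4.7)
The plan is to exploit the well-known centralizer structure in $\M_2(\RR)$: for any non-scalar $X \in \M_2(\RR)$, the centralizer $C(X) = \{Y \in \M_2(\RR) : XY = YX\}$ is exactly the two-dimensional subspace $\mathrm{span}(I, X)$. The inclusion $\mathrm{span}(I, X) \subseteq C(X)$ is immediate, and the reverse follows from a short direct calculation with the four scalar equations comprising $XY - YX = 0$; equivalently, any non-derogatory matrix $X$ has centralizer $\RR[X]$, which by Cayley--Hamilton equals $\mathrm{span}(I, X)$ in the $2\times 2$ case. On the other hand, any scalar matrix $X = \lambda I$ commutes with everything.

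With this structural input in hand, I would partition $\supp(\mu) = \mathcal{Z} \sqcup \mathcal{N}$, where $\mathcal{Z}$ consists of the scalar matrices in the support and $\mathcal{N}$ is its complement. Since $\mathcal{Z}$ lies in the one-dimensional subspace $\RR I$, the definition of $\delta(\mu)$ already yields $\mu(\mathcal{Z}) \leq \delta(\mu)$. I would then decompose
\[ T(\mu) = \sum_{X \in \mathcal{Z}} \mu(X) + \sum_{X \in \mathcal{N}} \mu(X) \sum_{Y \in \supp(\mu) \cap \mathrm{span}(I, X)} \mu(Y), \]
using that every $Y \in \supp(\mu)$ commutes with each scalar $X$ (so the inner sum equals $1$), and using the centralizer description for $X \in \mathcal{N}$. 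Each inner sum on the right is at most $\delta(\mu)$, because $\supp(\mu) \cap \mathrm{span}(I, X)$ sits inside a two-dimensional subspace. Therefore
\[ T(\mu) \leq \mu(\mathcal{Z}) + \bigl(1 - \mu(\mathcal{Z})\bigr)\, \delta(\mu) \leq 2\, \delta(\mu), \]
which is comfortably within the constant $8$ appearing in \eqref{ub1}.

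I do not anticipate any serious obstacle: the only structural input is the centralizer dichotomy, which is elementary linear algebra, and the rest is a one-line double-counting argument. The rather generous constant $8$ in \eqref{ub1} probably reflects the author setting up a uniform framework accommodating less sharp centralizer information (as would arise, for instance, when $d \geq 3$, where non-scalar centralizers can have dimension up to $d$ and a dyadic decomposition by ``centralizer dimension'' would be required). For $2 \times 2$ matrices, however, the clean dichotomy between scalar and non-scalar matrices suffices and in fact gives the quantitatively stronger bound $T(\mu) \leq 2\delta(\mu)$.
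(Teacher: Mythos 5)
Your proposal is correct, and it takes a genuinely different route from the paper. The paper first strips out the scalar matrices $\mathcal{I}=\{\lambda I\}$ (contributing at most $2\delta(\mu)$), and then, rather than invoking the exact centralizer structure, applies H\"{o}lder's inequality to pass to triples $(Y_1,Y_2,Y_3)$ commuting with a common non-scalar $X$; its Lemma \ref{s3dep} shows that any such triple must be linearly dependent, so one of the $Y_i$ lies in a set of dimension at most $2$, which produces the self-improving inequality $\Sigma \leq 6\,\delta(\mu)\,\Sigma^{2/3}$ and hence the constant $8$ in \eqref{ub1}. You instead use the sharper pointwise fact that the centralizer of any non-scalar $X \in \M_2(\RR)$ is exactly $\mathrm{span}(I,X)$ (a standard cyclic-vector/Cayley--Hamilton argument, and in substance equivalent to the paper's Lemma \ref{s3dep}, which amounts to saying this centralizer has dimension at most $2$), so that for each non-scalar $X$ the inner sum over commuting $Y$ is $\mu(\supp(\mu)\cap\mathrm{span}(I,X)) \leq \delta(\mu)$, while the scalar part satisfies $\mu(\mathcal{Z})\leq\delta(\mu)$; this linearizes the count and gives $T(\mu) \leq \mu(\mathcal{Z}) + (1-\mu(\mathcal{Z}))\delta(\mu) \leq 2\,\delta(\mu)$, which is stronger than \eqref{ub1} and closer to the lower bound $(2/3-o(1))\delta(\mu)$ exhibited in \eqref{jrlk}. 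One small correction to your closing speculation: the constant $8$ is not an artefact of a framework designed for $d\geq 3$; it simply reflects the $2\delta(\mu)$ scalar contribution plus the factor $6$ coming from the H\"{o}lder/permutation step in the paper's triple-counting argument. For a complete write-up you should include (or cite) the short proof of the centralizer dichotomy, but that is elementary and presents no obstacle.
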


% Thus, Theorem \ref{t1} implies that if $T(\mu)$ is large, then there exists a $2$-dimensional set $\mathcal{S} \subset \M_2(\RR)$ such that a large proportion of the $L^1$ mass of $\mu$ is concentrated on $\mathcal{S}$. 

The conclusion of Theorem \ref{t1} is optimal up to the multiplicative constant. Indeed, for any $\varepsilon >0$, there exists a finitely-supported probability measure $\mu$ such that $T(\mu), \delta(\mu) < \varepsilon$ and $T(\mu) \geq (2/3 - \varepsilon)\delta(\mu)$, see \eqref{examplesharp} and \eqref{jrlk}.  Moreover, the definition of $\delta(\mu)$ is optimal, in the sense that one genuinely has to consider the maximum of $\mu(H \cap {\rm supp}(\mu))$ for infinitely many choices of $2$-dimensional subspaces $H$. Indeed, given non-zero $A, B \in \M_2(\RR)$ such that $A \neq \lambda B$ for any $\lambda\in \RR$ and $AB = BA$, let 
\[ \mathscr{A} = \{ i A + j B : i, j \in [N] \}  \  \ \text{and} 
\ \ \mu = N^{-2} \mathds{1}_{\mathscr{A}} , \]
where $[N]$ denotes the set $\{1,2,\dots, N\}$ and $\mathds{1}_{\mathscr{A}}$ is the indicator function of the set $\mathscr{A}$.
% Here, we see that 
% \begin{align*}
%     (i X + jY)(i' X + j' Y) & = i i' X^2 + i j' XY + i'j YX + j j' Y^2 \\
%    & = i i' X^2 + (i j' + i' j ) XY + j j' Y^2 \\
%   &  = (i'X + j' Y)(iX + jY)
% \end{align*} 
% holds for any $i,i', j,j' \in [N]$.
Since $XY = YX$ for every $X,Y \in \mathscr{A}$, we have
$T(\mu) = 1$.
% \[ T(\mu) = N^{-4} \sum_{X, Y \in \mathscr{A}} \mathds{1}_{XY = XY} = N^{-4} |\mathscr{A}|^2 = 1.  \]

One can strengthen Theorem \ref{t1} when $\mu$ has a product type structure. In particular, given a finitely-supported probability measure $\nu$ on $\RR$, we may induce a finitely-supported probability measure $\mu_{\nu}$ on $\M_2(\RR)$ by defining 
\[ \mu_{\nu}((a_{i,j})_{1 \leq i,j \leq 2}) = \nu(a_{1,1}) \dots \nu(a_{2,2}) \]
for every $a_{1,1}, \dots, a_{2,2} \in \RR$.
% \[ \mu_{\nu}  \bigg( \begin{pmatrix}
%     a_1 & a_2\\
%     a_3 & a_4
% \end{pmatrix} \bigg) = \nu(a_1) \dots \nu(a_4)  \]
% for every $a_1, \dots, a_4 \in \RR$. 
Next, we define the multiplicative energy $M(\nu)$ as 
\[ M(\nu) = \sum_{a_1, a_2, a_3, a_4 \in \supp(\nu)\setminus\{0\}} \nu(a_1) \nu(a_2) \nu(a_4) \nu(a_4) \mathds{1}_{a_1/a_2 = a_3/a_4} \]
and the norms 
% $\n{\nu}_p^p =  \sum_{x \in \supp(\nu)} \nu(x)^p$ and $\n{\nu}_{\infty} = \max_{x \in \supp(\nu)} \nu(x)$
\[ \n{\nu}_p = ( \sum_{x \in \supp(\nu)} \nu(x)^p )^{1/p} \ \ \text{and} \  \ \n{\nu}_{\infty} = \max_{x \in \supp(\nu)} \nu(x)\]
for every $1 \leq p < \infty$. We now state our second result.

\begin{theorem} \label{ptpl}
Let $\nu$ be a finitely-supported probability measure on $\RR$. Then 
\[ T(\mu_{\nu}) \ll  \n{\nu}_2^{4}  M(\nu)^{1/2}  +  \n{\nu}_{\infty}^2 M(\nu) + \n{\nu}_2^6 + \nu(0)^3.  \] 
\end{theorem}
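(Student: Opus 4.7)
The plan is to decompose $T(\mu_\nu)$ according to the scalar structure of $X,Y$ and then exploit the explicit commutant of a non-scalar matrix in $\M_2(\RR)$. \emph{Step 1 (scalar reduction):} A scalar matrix $aI$ has mass $\nu(a)^2\nu(0)^2$ under $\mu_\nu$, so the total mass of scalars is $\nu(0)^2 \n{\nu}_2^2$. Since scalars commute with every $Y$, the contribution of pairs $(X,Y)$ with at least one scalar factor is $\leq 2\nu(0)^2 \n{\nu}_2^2$, and the Young-type inequality $p^2 q \leq p^3 + q^3$ (applied with $p=\nu(0)$, $q=\n{\nu}_2^2$) absorbs this into $\nu(0)^3 + \n{\nu}_2^6$. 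So I may assume both $X$ and $Y$ are non-scalar.

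\emph{Step 2 (commutant parametrization):} For any non-scalar $X \in \M_2(\RR)$ the commutant is the $2$-dimensional subspace $\{\alpha I + \beta X : \alpha,\beta \in \RR\}$, so a commuting non-scalar $Y$ is uniquely $Y = \alpha I + \beta X$ with $\beta \neq 0$. Writing $X = \begin{pmatrix} a & b \\ c & d \end{pmatrix}$ and $u := a-d$, one has $f=\beta b$, $g=\beta c$, $e-h = \beta u$; summing $\nu(e)\nu(h)$ over the shift $\alpha$ yields $r(\beta u)$, where $r(v):=\sum_t \nu(t)\nu(t-v)$. Collecting the $(a,d)$-sum with $u$ fixed produces a further factor $r(u)$, so (up to degenerate vanishing-entry pieces absorbed into the error terms) the non-scalar contribution is bounded by
\[
\sum_{\beta\neq 0} \sum_{(u,b,c)\neq (0,0,0)} r(u)\, r(\beta u)\, \nu(b)\nu(c)\nu(\beta b)\nu(\beta c).
\]

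\emph{Step 3 (splitting by $u$):} The case $u=0$ gives $r(0)=\n{\nu}_2^2$, and the remaining sum collapses to $\n{\nu}_2^4 \sum_\beta R(\beta)^2 = \n{\nu}_2^4\, M(\nu)$, where $R(\beta):= \sum_b \nu(b)\nu(\beta b)$; since $M(\nu)\leq 1$ this is at most $\n{\nu}_2^4 M(\nu)^{1/2}$, producing the first term of the theorem. The case $u\neq 0$ reduces to bounding $\sum_{\beta\neq 0} R(\beta)^2 s(\beta)$ where $s(\beta):= \sum_{u\neq 0} r(u)r(\beta u)$.

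\emph{Step 4 (main estimate and principal obstacle):} I would apply Cauchy--Schwarz in $\beta$,
\[
\Bigl(\sum_\beta R(\beta)^2 s(\beta)\Bigr)^2 \leq M(\nu) \cdot \sum_\beta R(\beta)^2 s(\beta)^2,
\]
and bound $\sum R^2 s^2 \leq \n{\nu}_\infty^2 M(\nu)$ by expanding $s(\beta)^2 = \sum_{u_1,u_2} r(u_1)r(u_2)r(\beta u_1)r(\beta u_2)$ and estimating $r(\beta u_i)\leq \n{\nu}_\infty$ pointwise on each of the two factors (the remaining $\sum_{u_1,u_2} r(u_1)r(u_2) = 1$). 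This yields $\sum R^2 s \leq \n{\nu}_\infty M(\nu)$, which is weaker than the target $\n{\nu}_\infty^2 M(\nu)$ by a factor of $\n{\nu}_\infty$. The principal obstacle is closing this gap: it appears to require either a Szemer\'{e}di--Trotter--type incidence bound for the lines through the origin in the $(u,v)$-plane weighted by the multiplicities $R(v/u)^2$, or a more delicate second-order energy argument exploiting the multiplicative--additive dichotomy (in the spirit of the Bourgain--Chang and sum-product methods advertised in the introduction). The sub-cases with $b=0$ or $c=0$ but $u\neq 0$ contribute only an extra $\nu(0)$-factor, absorbed again into $\nu(0)^3$ and $\n{\nu}_2^6$ by Young's inequality.
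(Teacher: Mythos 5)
Your Steps 1--3 are sound, and in fact your reduction is essentially the same as the paper's: parametrising the commutant of a non-scalar $X$ by $\alpha I+\beta X$ is equivalent to the system \eqref{xy}, and the quantity you arrive at, $\sum_{\beta\neq 0}R(\beta)^2 s(\beta)$ with $s(\beta)=\sum_{u\neq 0}r(u)r(\beta u)$, is (up to the $u=0$ diagonal and the degenerate $b=0$ or $c=0$ pieces) exactly the quantity $\sum_{z}q(z)^2\sum_{y}r_z(y)^2$ that the paper bounds, since $\sum_y r_z(y)^2=\sum_{v}r(v)r(zv)$. (Two small points: your $R(\beta)=\sum_b\nu(b)\nu(\beta b)$ must exclude $b=0$, otherwise $\sum_{\beta\neq 0}R(\beta)^2$ diverges when $\nu(0)>0$; and the $u=0$, $bc=0$ terms also need to be absorbed, as you do for $u\neq0$. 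Both are routine, and the paper's Lemma \ref{tn4} handles all such degeneracies.) The genuine gap is the one you identify yourself in Step 4: Cauchy--Schwarz in $\beta$ together with the pointwise bound $r\leq\n{\nu}_\infty$ only gives $\sum_\beta R(\beta)^2 s(\beta)\ll\n{\nu}_\infty M(\nu)$, and this is truly weaker than the theorem, not just a cosmetic loss. For $\nu$ uniform on $\{2,4,\dots,2^n\}$ one has $\n{\nu}_\infty\asymp n^{-1}$, $\n{\nu}_2^2=n^{-1}$, $M(\nu)\asymp n^{-1}$, so your bound is $\asymp n^{-2}$ while the asserted bound is $\ll n^{-5/2}$; hence no rearrangement of these one-dimensional (Cauchy--Schwarz/$L^\infty$) estimates can close the gap, and a genuinely two-dimensional input is unavoidable.

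The missing ingredient is precisely the weighted Szemer\'edi--Trotter theorem, which your closing sentence correctly anticipates. The paper (Lemma \ref{wtst} and Lemma \ref{ender2}) writes the non-degenerate contribution as
\[
\sum_{z\in\supp(q)}q(z)^2\sum_{y\in\supp(r_z)}r_z(y)^2=\sum_{z,y}q(z)^2 r_z(y)\sum_{a_1,a_2}\nu(a_1)\nu(a_2)\mathds{1}_{y=a_1+a_2z},
\]
interprets this as weighted incidences between the points $(z,y)$ with weights $q(z)^2r_z(y)$ and the lines $y=a_1+a_2z$ with weights $\nu(a_1)\nu(a_2)$, and feeds in $\n{q^2 r}_\infty\leq\n{\nu}_2^6$, $\n{q}_2^2=M(\nu)$, $\n{\nu\otimes\nu}_2=\n{\nu}_2^2$, $\n{\nu\otimes\nu}_1\leq 1$. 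After a short bootstrap (the incidence count reappears on the right with exponent $1/3$) this yields exactly $\n{\nu}_2^{4}M(\nu)^{1/2}+\n{\nu}_\infty^2M(\nu)+\n{\nu}_2^6$. So your reduction is compatible with the paper's proof, but as it stands the argument is incomplete at its decisive step; to finish it you would need to import a weighted incidence bound of the type of Lemma \ref{wtst} rather than elementary energy manipulations.
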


In order to illustrate the above upper bound, we set $\nu = |\A|^{-1} \mathds{1}_{\A}$ for some finite, non-empty set $\A \subset \RR$. Given such an $\mathcal{A}$, we define
\[     T(\mathcal{A}) =  \sum_{A,B\in \mathscr{A}}\mathds{1}_{AB = BA}, \ \ \ \text{where} \ \ \ \mathscr{A} =  \bigg\{ \begin{pmatrix}
    a_1 & a_2\\
    a_3 & a_4
\end{pmatrix}  : a_i \in \mathcal{A}  \bigg\}. \]
Moreover, writing $M(\mathcal{A})$ to be the number of $a_1, a_2, a_3, a_4 \in \mathcal{A} \setminus \{0\}$ such that $a_1/ a_2 = a_3/ a_4$, Theorem \ref{ptpl} implies that whenever $|\A| \geq 2$, one has
\begin{equation} \label{nim}
   T(\mathcal{A})  \ll |\A|^{4} M (\mathcal{A})^{1/2} .
\end{equation} 
% While the first upper bound in \eqref{nim} is sharp up to the implicit constant in Vinogradov notation when $0 \in \A$ and $M(\A) \ll |\A|^2$, since for such sets, one further has $T(\A) \gg |\A|^5$. 
Inequality \eqref{nim} is sharp up to the implicit constant in Vinogradov notation when $0 \in \A$ and $M(\A) \ll |\A|^2$.
% although here, one can combine Lemma \ref{tn4}, \eqref{sop} and \eqref{tauba} to deduce that $T(\A) \ll  |\A|^3 M(\A)$, thus obtaining a similar bound in a simpler fashion.

Before recording further upper bounds, we quickly consider two examples of sets $\A$ that provide many commuting pairs of matrices. Writing $I$ to be the $2 \times 2$ identity matrix, observe that for any $A \in {\rm Mat}_2(\RR)$ and any $\lambda \in \RR$, the matrices $A + \lambda I$ and $\lambda A$ always commute with $A$. Thus, setting $\A = [N]$, since we have $|\A \cap (\lambda + \A)| \gg N$ for at least $N/4$ many choices of $\lambda \in \mathbb{N}$, we see that $T(\A) \gg N^5$. Similarly, setting $\A = \{2,4,\dots, 2^N\}$, since $|\A \cap (\lambda \cdot \A)| \gg N$ for at least $N/4$ many choices of $\lambda \in \mathbb{N}$, we have $T(\A) \gg N^5$. 

Noting the above two examples, it is natural to consider estimates for $T(\A)$ when the underlying set $\A$ is either additively structured or multiplicatively structured. Thus, given a finite, non-empty set $\mathcal{A} \subset \RR$, we define the sumset $\A + \A$ and the additive doubling ${\rm K}$ as 
\[ \mathcal{A} + \mathcal{A} = \{a + b : a, b \in \mathcal{A}\} \ \ \text{and} \ \ {\rm K} = |\A+\A|/|\A| \]
and the product set $\A \cdot \A$ and the multiplicative doubling ${\rm M}$ as
\[ \A \cdot \A  = \{ a\cdot b : a, b \in \A\} \ \ \text{and} \ \ {\rm M} = |\A \cdot \A| /|\A| .\]
Roughly speaking, a set $\A$ may considered to be additively structured if ${\rm K}$ is small in terms of $\A$, say, for instance, when ${\rm K} \ll |\A|^{\varepsilon}$ for some small $\varepsilon>0$. A similar notion holds for multiplicative structure. As in the preceding two examples, one can show that if $\A$ is either additively or multiplicatively structured, then $T(\A)$ must be large. Moreover, our next two results will prove corresponding upper bounds of similar order in both of these cases.
% One may combine \eqref{nim} with the work of Solymosi \cite{So2009} on the sum-product conjecture to furnish the estimate
% \begin{equation} \label{sol56}
% T(\A) \ll {\rm K} |\A|^5 (\log |\A|)^{1/2}. 
% \end{equation}
% When ${\rm K} \ll 1$, the above upper bound is sharp up to the logarithmic factor. In fact, 

Upon incorporating ideas from work of Solymosi \cite{So2009} on the sum-product conjecture along with some of the incidence geometric techniques involved in the proof of Theorem \ref{ptpl}, which were themselves inspired by work of Murphy--Roche-Newton--Shkredov \cite{MRNS2015}, we can prove almost optimal estimates for $T(\A)$ whenever ${\rm K}$ is small.

\begin{Corollary} \label{dmf2}
    Let ${\rm K} >1$ and let $\mathcal{A} \subset \RR$ be a finite, non-empty set with $|\A+\A| = {\rm K}|\A|$. Then
\[  {\rm K}^{-1} |\A|^5 \leq T(\mathcal{A}) \ll {\rm K}^{4/3} |\A|^{5} .  \]
\end{Corollary}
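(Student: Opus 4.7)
For the lower bound, I use that $A$ and $A + \lambda I$ commute for every $A$ and every $\lambda \in \RR$. Given such $A$ and $\lambda$, the matrix $A + \lambda I$ belongs to $\mathscr{A}$ precisely when $a_{1,1} + \lambda, a_{2,2} + \lambda \in \A$, while its off-diagonal entries $a_{1,2}, a_{2,1}$ are unchanged. Summing over the free off-diagonal entries and over $(a_{1,1}, a_{2,2}, \lambda)$, one obtains
\[ T(\A) \ \geq \ |\A|^2 \sum_{a_{1,1}, a_{2,2} \in \A} |(\A - a_{1,1}) \cap (\A - a_{2,2})| \ = \ |\A|^2 E^{+}(\A), \]
where $E^{+}(\A)$ is the additive energy of $\A$. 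Cauchy--Schwarz yields $E^{+}(\A) \geq |\A|^4/|\A+\A| = |\A|^3/{\rm K}$, hence $T(\A) \geq {\rm K}^{-1}|\A|^5$.

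For the upper bound, the first step is a reformulation. A short computation shows that $A, B \in \M_2(\RR)$ commute if and only if their traceless parts are proportional, equivalently, if and only if the vectors $v(A) := (a_{1,1} - a_{2,2}, a_{1,2}, a_{2,1})$ and $v(B)$ are parallel in $\RR^3$. Up to an $O(|\A|^5)$ contribution from pairs involving a scalar matrix $cI \in \mathscr{A}$, this identifies $T(\A)$ with a count of pairs of points in the image of $\mathscr{A}$ under $v$ that lie on a common line through the origin. Passing to the affine chart $a_{2,1} \neq 0$ (with the degenerate cases treated separately) and setting $N(\lambda, \mu) := \#\{A \in \mathscr{A} : a_{1,2} = \lambda a_{2,1}, \ a_{1,1} - a_{2,2} = \mu a_{2,1}\}$, the problem reduces to bounding the energy $\sum_{(\lambda,\mu)} N(\lambda,\mu)^2$ by $\ll {\rm K}^{4/3}|\A|^5$.

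At this stage I would integrate Solymosi's convex position argument from \cite{So2009} with the incidence-geometric framework used in the proof of Theorem \ref{ptpl} (which itself follows \cite{MRNS2015}). One would dyadically decompose by the richness of the level sets of $N$ and bound each class via a Szemer\'{e}di--Trotter style incidence count on point--line configurations determined by $\A$ and its sumsets. The hypothesis $|\A+\A| = {\rm K}|\A|$ enters through the Pl\"{u}nnecke--Ruzsa inequalities $|n\A - m\A| \ll_{n,m} {\rm K}^{n+m}|\A|$ on the iterated sumsets that appear in these estimates, with the final exponent $4/3$ reflecting the sum--product exponent built into Solymosi's argument.

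The main obstacle I foresee is producing the sharp exponent ${\rm K}^{4/3}$ without a logarithmic loss. A naive route via \eqref{nim} combined with Solymosi's bound $M(\A) \ll {\rm K}^2|\A|^2 \log|\A|$ only yields $T(\A) \ll {\rm K}|\A|^5(\log|\A|)^{1/2}$. To reach the cleaner ${\rm K}^{4/3}|\A|^5$ one must fold the convex position argument directly into the incidence machinery underlying Theorem \ref{ptpl} rather than invoke \eqref{nim} as a black box, and this requires careful bookkeeping of the sumsets of $\A$ appearing at each stage of the incidence count.
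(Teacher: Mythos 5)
Your lower bound is correct and is in substance the paper's own argument: counting the pairs $(A, A+\lambda I)$ is exactly the family of solutions of \eqref{xy} with equal off-diagonal entries and equal diagonal differences, which is how the paper gets $T(\A) \geq |\A|^2 E(\A) \geq {\rm K}^{-1}|\A|^5$ via \eqref{gkid} and Cauchy--Schwarz. Your reformulation of commutativity as proportionality of the traceless parts, i.e.\ of the vectors $(a_{1,1}-a_{2,2}, a_{1,2}, a_{2,1})$, is also correct and is equivalent to \eqref{xy}.

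The upper bound, however, is not proved: from the reduction to bounding $\sum_{(\lambda,\mu)} N(\lambda,\mu)^2$ onwards you only describe a strategy (``integrate Solymosi's convex position argument with the incidence framework, dyadically decompose, use Pl\"{u}nnecke--Ruzsa''), and you yourself flag that the decisive step --- getting ${\rm K}^{4/3}|\A|^5$ with no logarithmic loss --- is unresolved. The missing argument in the paper runs as follows. After removing the degenerate solutions via Lemma \ref{tn4} (a contribution $O(|\A|^5)$), the nondegenerate count is at most $\sum_{z} q_1(z)^2 r_1(z)$, where $q_1(z)$ counts representations $z=a/b$ with $a,b \in \A\setminus\{0\}$ and $r_1(z)$ counts representations $z=(a-b)/(c-d)$ with $a\neq b$, $c \neq d$; H\"{o}lder then gives $\big(\sum_z q_1(z)^3\big)^{2/3}\big(\sum_z r_1(z)^3\big)^{1/3}$, and the two third moments are bounded separately: $\sum_z q_1(z)^3 \ll |\A+\A|^2|\A|$ using Solymosi's level-set bound $|Q_\tau| \ll |\A+\A|^2/\tau^2$ (Lemma \ref{so}), and $\sum_z r_1(z)^3 \ll |\A|^9$ using the Szemer\'{e}di--Trotter-based bound $|R_\tau| \ll |\A|^6/\tau^2$ (Lemma \ref{jfh}), yielding $|\A+\A|^{4/3}|\A|^{11/3} = {\rm K}^{4/3}|\A|^5$. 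This also answers the obstacle you raise: the logarithm in $M(\A) \ll {\rm K}^2|\A|^2\log|\A|$ arises because the second moment spreads over all dyadic scales, whereas the third moments above are dominated by their top dyadic scale, so no log is lost --- the price is the exponent ${\rm K}^{4/3}$ in place of ${\rm K}$. Note also that the doubling hypothesis enters only through Solymosi's bound on $\sum_z q_1(z)^3$; no Pl\"{u}nnecke--Ruzsa estimates on iterated sumsets are used, so that part of your plan points in the wrong direction. As it stands, the proposal establishes the lower bound but leaves the upper bound essentially open.
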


This is sharp up to the $O(1)$ factor whenever ${\rm K} \ll 1$. For instance, given  $d, L_1, \dots, L_d \in \NN$ and $v_0, \dots, v_d \in \mathbb{Z}$, we may set  $\A = \{ v_0 + l_1 v_1 + \dots + l_d v_d : 0 \leq l_i < L_i \}$. 
% \end{equation} 
 Such sets are called \emph{generalised arithmetic progressions} of \emph{dimension} $d$ and satisfy $1 \leq {\rm K} =\exp(O(d))$, whence, Corollary \ref{dmf2} implies that $|\A|^5 \ll_d T(\A) \ll_d |\A|^5$. A special case of this is when $\A = [-N,N] \cap \ZZ$. In forthcoming work with Chapman  \cite{CM2025}, we prove an asymptotic formula for $T(\A)$ in the latter setting.

% Since our upper bound in \eqref{nim} depends so crucially on the multiplicative energy of $\A$, 
% it is natural to consider the case when $\A$ is multiplicatively structured. Hence, we define the product set $\A \cdot \A$ and the multiplicative doubling ${\rm M}$ as
% \[ \A \cdot \A  = \{ a\cdot b : a, b \in \A\} \ \ \text{and} \ \ {\rm M} = |\A \cdot \A| /|\A| .\]
We now turn to the setting when the set $\A$ is multiplicatively structured. Many important results concerning the aforementioned sum-product phenomenon rely on studying additive equations over sets with small multiplicative doubling, see, for example, breakthrough work of Bourgain--Chang \cite{BC2004} on $k$-fold sumset-product set estimates. Employing the circle of ideas surrounding recent progress on this subject, we prove the following.

\begin{theorem} \label{multbd}
  There exists an absolute constant $C>0$ such that for any ${\rm M} >1$ and any finite, non-empty set $\A \subset \RR$ with $|\A \cdot \A| = {\rm M}|\A|$, one has 
    \[ {\rm M}^{-6}|\A|^5 \ll T(\A)  \ll {\rm M}^{C} |\A|^5 . \]
\end{theorem}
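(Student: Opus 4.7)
For the lower bound, observe that for any nonzero $\lambda \in \A/\A$, every matrix $A \in \M_2(\RR)$ with entries in $\A \cap \lambda^{-1}\A$ lies in $\mathscr{A}$ and commutes with $\lambda A \in \mathscr{A}$. Writing $E(\lambda) = |\A \cap \lambda^{-1}\A|$, this yields $T(\A) \geq \sum_{\lambda} E(\lambda)^4$ where the sum is over nonzero $\lambda \in \A/\A$. Since $\sum_\lambda E(\lambda) \geq (|\A|-1)^2$ and $|\A/\A| \ll {\rm M}^2|\A|$ by Pl\"{u}nnecke's inequality, the power-mean inequality $\sum_\lambda E(\lambda)^4 \geq |\A/\A|^{-3}\bigl(\sum_\lambda E(\lambda)\bigr)^4$ delivers the claimed bound $T(\A) \gg {\rm M}^{-6}|\A|^5$.

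For the upper bound, the key structural fact is that the centraliser of any non-scalar $A \in \M_2(\RR)$ equals the two-dimensional subspace $\{\lambda A + \mu I : \lambda, \mu \in \RR\}$. Decomposing commuting pairs according to whether $A$ is scalar (contributing $O(|\A|^5)$) or non-scalar, and parameterising $B = \lambda A + \mu I$ in the latter case, the condition $B \in \mathscr{A}$ decouples: the requirements $\lambda a_2, \lambda a_3 \in \A$ produce two independent factors of $E(\lambda)$ over the choice of $(a_2, a_3)$, while $\mu + \lambda a_1, \mu + \lambda a_4 \in \A$, after the shift $\mu \mapsto \nu := \mu + \lambda a_4$, reduces the joint sum over $(a_1, a_4, \mu)$ to $D(\lambda) := \sum_t r_{\A-\A}(t) r_{\A-\A}(\lambda t)$, where $r_{\A-\A}(t) = |\{(a, a') \in \A^2 : a - a' = t\}|$. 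This yields
\[ T(\A) \ll |\A|^5 + \sum_{\lambda \neq 0} E(\lambda)^2 D(\lambda). \]

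A direct application of Cauchy--Schwarz gives $D(\lambda) \leq E^+(\A) := \sum_t r_{\A-\A}(t)^2$ uniformly in $\lambda$, while $\sum_\lambda E(\lambda)^2 \leq M(\A) \leq |\A|^3$. Consequently the problem reduces to proving the sum-product estimate $E^+(\A) \ll {\rm M}^C |\A|^2$ whenever $|\A \cdot \A| \leq {\rm M}|\A|$. To establish this I would assume for contradiction $E^+(\A) \geq |\A|^3/K$, apply Balog--Szemer\'{e}di--Gowers to extract $\A' \subseteq \A$ of size $\gg |\A| K^{-O(1)}$ with $|\A' + \A'| \ll K^{O(1)}|\A'|$, and observe via Ruzsa calculus that $|\A' \cdot \A'| \ll K^{O(1)} {\rm M} \, |\A'|$ as well, so $\A'$ simultaneously has small additive and multiplicative doubling.

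The main obstacle is then showing that any such $\A'$ must satisfy $|\A'| \ll (K{\rm M})^{O(1)}$, with a polynomial --- rather than quasi-polynomial --- dependence on the doubling constants. Here one deploys the Bourgain--Chang sum-product machinery flagged in the introduction: after passing to $\log \A'$ (handling sign and zero separately), the weak polynomial Freiman--Ruzsa theorem over $\ZZ$ confines $\log \A'$ within a generalised arithmetic progression of rank $(\log K)^{O(1)}$, and Schmidt's subspace theorem then bounds the number of $S$-unit equation solutions encoding the additive coincidences among the multiplicatively structured elements of $\A'$. Closing the loop produces $K \ll {\rm M}^{O(1)}$ and hence $T(\A) \ll {\rm M}^C |\A|^5$; the delicate part is tracking exponents carefully enough through the BSG/Ruzsa/PFR/Schmidt chain to make the final dependence on ${\rm M}$ polynomial.
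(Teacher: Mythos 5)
Your lower bound is correct and is essentially the paper's own argument (dilation pairs $(A,\lambda A)$, a power-mean/H\"{o}lder step, and Pl\"{u}nnecke--Ruzsa giving $|\A/\A|\le {\rm M}^2|\A|$; just treat $0\in\A$ separately, where $T(\A)\ge |\A|^5$ is trivial and quotient sets need care). Your centraliser reduction of the upper bound to $T(\A)\ll |\A|^5+|\A|^3E^+(\A)$ is also sound, modulo minor bookkeeping over which $\lambda$ are summed when $0\in\A$, and it is equivalent to the paper's inequality \eqref{spct}; you correctly identify that everything hinges on the energy estimate $E^+(\A)\ll {\rm M}^{O(1)}|\A|^2$ for sets with $|\A\cdot\A|\le {\rm M}|\A|$, which is the paper's Lemma \ref{unwtdiag} (cf.\ Proposition \ref{diag}).

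The genuine gap is your proposed proof of that estimate: a contradiction argument through Balog--Szemer\'{e}di--Gowers cannot reach the $|\A|^2$ threshold. Writing $E^+(\A)=|\A|^3/K$, BSG yields $\A'\subseteq\A$ with $|\A'|\gg K^{-C_1}|\A|$ and $|\A'+\A'|\ll K^{C_1}|\A'|$, hence $|\A'\cdot\A'|\ll {\rm M}K^{C_1}|\A'|$; even granting your ``main obstacle'' in the strongest polynomial form $|\A'|\ll (K{\rm M})^{C_2}$ (which is true, e.g.\ by sum--product or by PFR plus the subspace theorem), the loop closes to $|\A|\ll K^{C_3}{\rm M}^{C_3}$, i.e.\ $K\gg (|\A|/{\rm M}^{C_3})^{1/C_3}$. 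This is a \emph{lower} bound on $K$ --- the assertion ``closing the loop produces $K\ll{\rm M}^{O(1)}$'' reverses the logic --- and since necessarily $C_3>1$ it only gives $E^+(\A)\ll {\rm M}^{O(1)}|\A|^{3-1/C_3}$, never $\ll{\rm M}^{O(1)}|\A|^2$: BSG is lossy, indeed vacuous, exactly in the regime $K\sim|\A|$ that you need, because large-subset/small-doubling information is strictly weaker than an optimal energy bound. Fed back into your reduction this yields only $T(\A)\ll{\rm M}^{O(1)}|\A|^{6-1/C_3}$, short of ${\rm M}^C|\A|^5$. The paper's route avoids BSG altogether: since the multiplicative doubling hypothesis already concerns all of $\A$, it applies weak PFR (Lemma \ref{wkpf}) inside the multiplicative group generated by $\A$ (after removing $0,\pm1$ and reducing to positive elements) to extract $\mathcal{B}\subseteq\A$ of density ${\rm M}^{-O(1)}$ and multiplicative rank $O(\log {\rm M})$, covers \emph{all} of $\A$ by ${\rm M}^{O(1)}$ dilates of $\mathcal{B}\cdot\mathcal{B}^{-1}$ via Ruzsa covering (Lemma \ref{rzcov}), bounds the additive energy of each piece by $|\cdot|^2+\exp(O(\log{\rm M}))|\cdot|$ through the subspace theorem (Lemmas \ref{subs} and \ref{sbap}), and recombines with the $E^{1/4}$ triangle inequality \eqref{union3}. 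That chain produces the required $E(\A)\ll{\rm M}^{O(1)}|\A|^2$, and it is the step your argument needs to adopt or replace.
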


This is sharp up to the $O({\rm M}^C)$ factor. A notable aspect of our upper bound above is that we are able to circumvent $O(|\A|^{o(1)})$ factor losses which frequently appear in this type of setting, see for instance \cite[Proposition 2]{BC2004} and \cite[Theorem 1.3]{PZ2021}. A similar comment holds for our upper bounds in Corollary \ref{dmf2} and \eqref{nim}.
% , variants of which were proved by Rudnev--Shkredov \cite{RS2022} with extra logarithmic factors in an equivalent formulation. 

As in the setting of the Erd\H{o}s--Szemer\'{e}di sum-product conjecture, it is often much harder to obtain almost-optimal sum-product estimates in the case when $\A$ is multiplicatively structured in comparison to when $\A$ is additively structured. Moreover, while the methods involved in Theorem \ref{ptpl} and Corollary \ref{dmf2} are incidence geometric in nature, Theorem \ref{multbd} exploits a variety of ideas from additive combinatorics and number theory, including some of the recently developed techniques related to the polynomial Freiman--Ruzsa conjecture, see \cite{GGMT2023, PZ2021}. A key component in the proof of Theorem \ref{multbd} involves proving that sets of real numbers with small multiplicative doubling exhibit few solutions to the equation $x_1 + x_2 = x_3 + x_4$, and we prove the following result in this direction.

\begin{Proposition} \label{diag}
    Let $\A \subset \RR$ be a finite, non-empty set with $|\A \cdot \A| = {\rm M}|\A|$ for some ${\rm M} >1$, and let $\nu: \RR \to [0,1]$ satisfy $\supp(\nu) \subseteq \A$ and $\sum_{a \in \A}\nu(a) \leq 1$.  Then
    \begin{equation}  \label{glbb}
    \sum_{a_1, a_2, a_3, a_4 \in \A} \nu(a_1)\dots\nu(a_4) \mathds{1}_{a_1 + a_2 = a_3 + a_4} \ll ( {\rm M}  \cdot \log (4/\n{\nu}_2))^{O(1)} \n{\nu}_2^4.  
    \end{equation}
    The logarithmic factor $\log (4/\n{\nu}_2)$ on the right hand side can be removed if $\nu = |\A'|^{-1} \mathds{1}_{\A'}$ for some non-empty set $\A' \subseteq \A$.
\end{Proposition}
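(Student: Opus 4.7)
The plan is to first establish the uniform estimate
\[ E_+(B) = \#\{ (b_1, b_2, b_3, b_4) \in B^4 : b_1 + b_2 = b_3 + b_4 \} \ll M^{O(1)} |B|^2 \]
for every non-empty $B \subseteq \A$, and then deduce the weighted statement for $\nu$ by a dyadic level-set decomposition combined with Cauchy--Schwarz. The logarithmic factor enters only at the second step, and disappears for normalised indicators because only one dyadic level is occupied.

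For the uniform bound, I would first reduce to the case $\A \subset (0, \infty)$ by splitting off the zero element and separately treating the two sign-classes (each of which inherits multiplicative doubling $O(M)$ by Plünnecke--Ruzsa, after noting that $-\A_-$ is a multiplicative set with the same doubling as $\A_-$), handling cross-sign configurations of $b_1 + b_2 = b_3 + b_4$ within the same framework. The hypothesis $|\A \cdot \A| \leq M|\A|$ then becomes $|\log \A + \log \A| \leq M |\log \A|$ in the torsion-free group $(\RR,+)$, to which I apply the recently-established weak polynomial Freiman--Ruzsa theorem for integers (extended to $\RR$ by a standard discretisation). This produces a multiplicative containment
\[ \A \subseteq P = \{ c \cdot g_1^{n_1} \cdots g_d^{n_d} : 0 \leq n_i < N_i \} \]
with dimension $d = O(\log M)$ and $|P| \leq M^{O(1)} |\A|$. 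For a subset $B \subseteq \A$, dividing $b_1 + b_2 = b_3 + b_4$ through by $b_4$ recasts the equation as $x_1 + x_2 - x_3 = 1$ with each $x_i = b_i / b_4$ lying in the rank-$d$ multiplicative group generated by $g_1, \dots, g_d$. The Evertse--Schlickewei--Schmidt theorem on $S$-unit equations, a consequence of Schmidt's subspace theorem, bounds the number of non-degenerate projective solutions by $\exp(O(d)) = M^{O(1)}$; each such solution contributes at most $|B|$ quadruples via the free choice of $b_4$. The degenerate solutions, in which some subsum vanishes, collapse to the trivial configurations $\{b_1, b_2\} = \{b_3, b_4\}$ and contribute at most $2|B|^2$. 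Together this yields $E_+(B) \leq 2|B|^2 + M^{O(1)} |B| \ll M^{O(1)} |B|^2$.

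For the weighted statement, I dyadically decompose $\nu = \sum_{k \geq 0} \nu_k$ with $\nu_k = \nu \mathds{1}_{\A_k}$ and $\A_k = \{ a : 2^{-k-1} < \nu(a) \leq 2^{-k} \}$, so that $\n{\nu_k}_2 \asymp |\A_k|^{1/2} 2^{-k}$. Expanding $E_+(\nu)$ as a quadrilinear sum in the $\nu_{k_i}$ and applying Cauchy--Schwarz twice to the count of quadruples in $\A_{k_1} \times \cdots \times \A_{k_4}$ gives
\[ E_+(\nu) \ll \sum_{k_1, \dots, k_4} \prod_{i=1}^{4} 2^{-k_i} E_+(\A_{k_i})^{1/4} \ll M^{O(1)} \Big( \sum_{k \geq 0} \n{\nu_k}_2 \Big)^{4}. \]
Splitting the inner sum at $L = \lceil 2 \log_2( 4 / \n{\nu}_2 ) \rceil$, Cauchy--Schwarz and the identity $\sum_k \n{\nu_k}_2^2 = \n{\nu}_2^2$ bound the head by $\sqrt{L+1}\, \n{\nu}_2$; the tail is controlled via $\n{\nu_k}_2 \leq 2^{-k/2} \n{\nu_k}_1^{1/2}$ and Cauchy--Schwarz against $\sum_k \n{\nu_k}_1 \leq 1$, yielding tail $\leq 2^{-L/2} \ll \n{\nu}_2$. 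Combining produces the claimed $M^{O(1)} \log^{O(1)}(4 / \n{\nu}_2) \n{\nu}_2^4$ bound. When $\nu = |\A'|^{-1} \mathds{1}_{\A'}$ only one dyadic level is occupied, and the uniform bound applies directly to $\A'$, so no logarithmic factor arises.

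The principal difficulty is arranging the Freiman-type input with polynomial parameters, namely $d = O(\log M)$ and covering size $M^{O(1)} |\A|$ (rather than the $d = \log^{O(1)} M$ and covering size $\exp(\log^{O(1)} M)|\A|$ of the classical Chang--Sanders bounds), since the $S$-unit count converts $d$ into the critical $\exp(O(d))$ factor. This is exactly what the polynomial Freiman--Ruzsa statement for $\ZZ$, now available through recent progress, provides; absent this input the resulting bound would degrade to a quasi-polynomial factor in $M$, inconsistent with the stated estimate.
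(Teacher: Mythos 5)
Your weighted reduction is fine and is essentially the paper's own: dyadic level sets, the quadrilinear Cauchy--Schwarz bound by $\prod_i E(\A_{k_i})^{1/4}$ (the paper's Lemma \ref{wtun}), a head/tail split giving the $\log^{O(1)}(4/\n{\nu}_2)$ factor, and the observation that a normalised indicator occupies a single level. The genuine gap is in your route to the uniform bound $E(B)\ll {\rm M}^{O(1)}|B|^2$ for all $B\subseteq\A$: you assert that the new Freiman--Ruzsa input yields a containment of \emph{all} of $\A$ in a single translated multiplicative progression $c\cdot\{g_1^{n_1}\cdots g_d^{n_d}\}$ with $d=O(\log {\rm M})$ and size ${\rm M}^{O(1)}|\A|$. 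No such statement is available. The Gowers--Green--Manners--Tao result used here (Lemma \ref{wkpf}) is the \emph{weak} polynomial Freiman--Ruzsa theorem over $\ZZ^D$: it only produces a subset $\mathcal{B}\subseteq\A$ of density ${\rm M}^{-O(1)}$ lying in a subgroup of rank $O(\log {\rm M})$. The single-progression statement you invoke is in fact false in general: adjoin to $\{2,4,\dots,2^n\}$ a set of ${\rm M}$ multiplicatively independent reals; the multiplicative doubling is $O({\rm M})$, yet any set of the form $c\cdot\langle g_1,\dots,g_d\rangle$ containing it must have $d\geq {\rm M}-1$, since the pairwise ratios of the adjoined elements are multiplicatively independent. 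This matters because your application of Evertse--Schlickewei--Schmidt converts the rank $r$ of the group containing the ratios $b_i/b_4$ into a factor $\exp(O(r))$; what weak PFR plus covering actually gives is $\A$ covered by ${\rm M}^{O(1)}$ cosets of a rank-$O(\log{\rm M})$ group, and lumping these into one group would force $r={\rm M}^{O(1)}$ and a catastrophic $\exp({\rm M}^{O(1)})$ loss. (Your ``standard discretisation'' remark is also unnecessary: the multiplicative group generated by finitely many positive reals is free abelian of finite rank, so the $\ZZ^D$ theorem applies verbatim after passing to the positive half of $\A$, as the paper does.)

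The missing idea is the paper's covering-and-recombining step (Lemma \ref{unwtdiag}). After weak PFR produces $\mathcal{B}\subseteq\A\cap(0,\infty)$ with $|\mathcal{B}|\gg|\A|/{\rm M}^{O(1)}$ and ${\rm rk}(\mathcal{B})\ll\log{\rm M}$, one uses $|B\cdot\mathcal{B}|\leq|\A\cdot\A|\leq {\rm M}^{O(1)}|\mathcal{B}|$ and Ruzsa's covering lemma multiplicatively to write $B\subseteq\mathcal{S}\cdot\mathcal{B}\cdot\mathcal{B}^{-1}$ with $|\mathcal{S}|\leq {\rm M}^{O(1)}$, splits $B$ into pieces $B_i\subseteq s_i\cdot\mathcal{B}\cdot\mathcal{B}^{-1}$, and exploits dilation invariance $E(B_i)=E(s_i^{-1}\cdot B_i)$ so that the subspace theorem is applied only inside the \emph{fixed} rank-$O(\log{\rm M})$ group generated by $\mathcal{B}$ (this also disposes of signs, since $s_i$ may be negative while $s_i^{-1}B_i\subset(0,\infty)$). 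The pieces are then recombined with the triangle inequality $E(\cup_i B_i)^{1/4}\leq\sum_i E(B_i)^{1/4}$ from \eqref{union3}, costing only $|\mathcal{S}|^4={\rm M}^{O(1)}$. Without this step your argument does not go through; with it, it coincides with the paper's proof.
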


While such results, also often referred to as belonging to the ``few products, many sums" phenomenon, are somewhat new for the case when $\A \subset \RR$, they have been well studied in the integer setting. The first result of this flavour was proven by Chang \cite{Ch2003} over $\ZZ$, which was then quantitatively strengthened in the aforementioned work of Bourgain--Chang \cite{BC2004}. This was improved and simplified in subsequent work of P\'{a}lv\"{o}lgyi--Zhelezov \cite{PZ2021}, and generalised to non-linear equations in \cite{Mu2024b}. 
% The precise quantitative analogue of \eqref{glbb} over $\ZZ$ can be deduced from \cite[Theorem 1.6]{Mu2024b}.

The proof methodologies from \cite{Ch2003, BC2004, PZ2021, Mu2024b} rely crucially on factorisation properties over integers, and so, they do not generalise to the case when $\A \subset \RR$. In this setting, previous work of Chang \cite{Ch2009}, as well as some of our own recent work \cite{Mu2024a}, involved proving related sum-product type estimates conditional on the weak Polynomial Freiman--Ruzsa conjecture. Building on these ideas, our proof of Proposition \ref{diag} combines the resolution of the latter conjecture by Gowers--Green--Manners--Tao \cite{GGMT2023} with a quantitative version of Schmidt's subspace theorem \cite{ESS2002}. We refer the reader to \cite{Ch2009, GGMT2023, GMT2023, Mu2024a} and the references therein for further details about the weak Polynomial Freiman--Ruzsa conjecture over the integers.

Proposition \ref{diag} may be combined in a natural fashion with Balog--Szemer\'{e}di--Gowers theorem \cite{Gow1998,Sch2015} to prove that any set $\A \subset \RR$ can be decomposed as $\A = \mathcal{B} \cup \mathcal{C}$ such that 
\[ E(\mathcal{B}) = |\{(b_1, \dots, b_4) \in \mathcal{B}^4 : b_1 - b_2 = b_3  -b_4\}|
\]
is almost as small as it can be, while $M(\mathcal{C})$ is slightly smaller than the trivial upper bound. Indeed, given $\delta >0$, if $M(\mathcal{A}) \leq |\mathcal{A}|^{3 - \delta}$, then we may set $\mathcal{C} = \mathcal{A}$ and we would be done, otherwise, we may apply Balog--Szemer\'{e}di--Gowers theorem to deduce that there exists $\mathcal{A}_1 \subseteq \mathcal{A}$ such that $|\mathcal{A}_1| \gg |\mathcal{A}|^{1-\delta}$ and $|\mathcal{A}_1 \cdot \mathcal{A}_1| \ll |\mathcal{A}_1|^{1 + O(\delta)}$. We can now apply Proposition \ref{diag} to deduce that $E(\mathcal{A}_1) \ll_{\delta} |\mathcal{A}_1|^{2 + O(\delta)}$. Applying the same procedure with $\mathcal{A} \setminus \mathcal{A}_1$, we can iteratively obtain a decomposition $\mathcal{A} = \mathcal{A}_1 \cup \dots \mathcal{A}_r \cup \mathcal{C}$ such that $r \ll_{\delta} |\mathcal{A}|^{ \delta}$ and $E(\mathcal{A}_i) \ll_{\delta} |\mathcal{A}_i|^{2 + O(\delta)}$ for every $1 \leq i \leq r$ and $M(\mathcal{C}) \ll |\mathcal{C}|^{3 - \delta}$, see \cite[Lemma $4.2$]{Mu2021}. We may now set $\mathcal{B} = \mathcal{A}_1 \cup \dots \cup \mathcal{A}_r$ and apply Lemma \ref{wtun} to obtain the claimed estimate. Such results are known as low-energy decompositions, see \cite{Mu2021, Mu2024b}. Since we have a non-trivial upper bound for $M(\mathcal{C})$, we may use \eqref{nim} to deduce that $T(\mathcal{C}) = o(|\mathcal{C}|^{5+1/2})$, while as $E(\mathcal{B})$ is almost extremally small, we may employ \eqref{spct} to deduce that $T(\mathcal{B})$ is extremally small. We record this as follows.

\begin{Corollary} \label{dve}
There exists an absolute constant $C>0$ such that for any $\delta>0$ and any finite, non-empty set $\mathcal{A} \subset \RR$, one can find disjoint sets $\mathcal{B}, \mathcal{C}$ with $\mathcal{A} = \mathcal{B} \cup \mathcal{C}$ such that 
\[  T(\mathcal{B}) \ll |\mathcal{B}|^3 E(\mathcal{B}) \ll_{\delta} |\mathcal{B}|^{5 + C\delta} \  \ \text{and} \ \ T(\mathcal{C}) \ll |\mathcal{C}|^4 M(\mathcal{C})^{1/2} \ll_{\delta} |\mathcal{C}|^{5+1/2 - \delta} .\]    
\end{Corollary}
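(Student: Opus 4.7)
The plan is to execute the iterative low-energy decomposition sketched in the paragraph immediately preceding the statement. The starting observation is a dichotomy: if $M(\mathcal{A}) \leq |\mathcal{A}|^{3-\delta}$, then one takes $\mathcal{B} = \emptyset$ and $\mathcal{C} = \mathcal{A}$, in which case the required bound on $T(\mathcal{C})$ follows immediately from \eqref{nim}. Otherwise, the multiplicative form of the Balog--Szemer\'edi--Gowers theorem applied to $\mathcal{A}$ produces a subset $\mathcal{A}_1 \subseteq \mathcal{A}$ with $|\mathcal{A}_1| \gg_\delta |\mathcal{A}|^{1-O(\delta)}$ and $|\mathcal{A}_1 \cdot \mathcal{A}_1| \ll_\delta |\mathcal{A}_1|^{1+O(\delta)}$. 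Applying Proposition \ref{diag} to the uniform probability measure $\nu = |\mathcal{A}_1|^{-1}\mathds{1}_{\mathcal{A}_1}$, whose use eliminates the logarithmic factor in \eqref{glbb}, then delivers $E(\mathcal{A}_1) \ll_\delta |\mathcal{A}_1|^{2+O(\delta)}$.

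I would then iterate on $\mathcal{A} \setminus \mathcal{A}_1$ and continue until the stopping condition on $M$ is met. A standard iteration lemma guarantees termination in $r \ll_\delta |\mathcal{A}|^\delta$ steps, producing a partition $\mathcal{A} = \mathcal{A}_1 \sqcup \dots \sqcup \mathcal{A}_r \sqcup \mathcal{C}$ in which each $\mathcal{A}_i$ satisfies the above energy bound while $M(\mathcal{C}) \leq |\mathcal{C}|^{3-\delta}$.

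With $\mathcal{B} := \mathcal{A}_1 \sqcup \dots \sqcup \mathcal{A}_r$, the bound on $T(\mathcal{C})$ is immediate from \eqref{nim} combined with the doubling bound on $M(\mathcal{C})$. For $T(\mathcal{B})$, the leftmost estimate $T(\mathcal{B}) \ll |\mathcal{B}|^3 E(\mathcal{B})$ is the general bound provided earlier in the paper, so it remains to combine the energy estimates across the pieces. Writing $r_{\mathcal{B}-\mathcal{B}}(t) = \sum_{i,j=1}^{r} r_{\mathcal{A}_i,\mathcal{A}_j}(t)$ and applying Cauchy--Schwarz first over the $r^2$ indices $(i,j)$ and then via the bilinear inequality $E(\mathcal{A}_i,\mathcal{A}_j) \leq \sqrt{E(\mathcal{A}_i) E(\mathcal{A}_j)}$ gives
\[ E(\mathcal{B}) = \sum_{t} r_{\mathcal{B}-\mathcal{B}}(t)^2 \leq r^2 \sum_{i,j=1}^{r} E(\mathcal{A}_i,\mathcal{A}_j) \leq r^2 \bigg( \sum_{i=1}^{r} \sqrt{E(\mathcal{A}_i)} \bigg)^2 \ll_\delta |\mathcal{A}|^{2\delta} \cdot |\mathcal{A}|^{2+O(\delta)} , \]
which when multiplied by $|\mathcal{B}|^3$ yields the advertised estimate on $T(\mathcal{B})$.

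The main obstacle I anticipate is controlling the accumulated $O(\delta)$ losses so that the final exponents come out as $5 + C\delta$ and $5+\tfrac12-\delta$ in the clean form stated: each invocation of Balog--Szemer\'edi--Gowers, Proposition \ref{diag}, and the Cauchy--Schwarz combination contributes its own $O(\delta)$ correction. In practice, one runs the argument with a sufficiently small substitute parameter $\delta/K$ for a large absolute $K$ determined by the iteration, and only renames at the end, which is how the absolute constant $C$ in the statement is absorbed.
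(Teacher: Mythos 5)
Your proposal is correct and follows essentially the same route as the paper: the dichotomy on $M(\mathcal{A})$, multiplicative Balog--Szemer\'edi--Gowers plus Proposition \ref{diag} (in its uniform-measure form, i.e.\ Lemma \ref{unwtdiag}) on each extracted piece, the iteration of \cite[Lemma 4.2]{Mu2021}, and then \eqref{spct} and \eqref{nim} for $T(\mathcal{B})$ and $T(\mathcal{C})$; your Cauchy--Schwarz combination of the $E(\mathcal{A}_i)$ is precisely the content of Lemma \ref{wtun}, which is what the paper invokes. One small bookkeeping remark: to land exactly on the exponent $5+1/2-\delta$ for $\mathcal{C}$ you should run the stopping condition at $M(\cdot) \leq |\cdot|^{3-2\delta}$, i.e.\ rescale $\delta$ \emph{upward} rather than replacing it by $\delta/K$ (the enlarged losses on the $\mathcal{B}$ side are harmlessly absorbed into the absolute constant $C$).
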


Noting the above result, one might be interested in considering upper bounds for $T(\A)$ which depend purely on $|\A|$. For instance, inequality \eqref{nim} implies that $T(\A) \ll |\A|^{5+1/2}$ while all of our lower bounds for $T(\A)$ are of the order $O(|\A|^5)$. It is natural to wonder which of these is sharp, and in order to provide a partial answer to this, we turn to a related problem. 
% In fact, estimates of the shape \eqref{nim} with extra logarithmic factors on the right hand side were present in earlier work of Rudnev--Shkredov \cite{RS2022} on growth in affine groups.
% , and more specifically, to the case when considering expansion over a particular family of affine transformations. 
Thus, given $a \in \mathbb{R}\setminus\{0\}$ and $b \in \RR$, we define the affine transformation $g_{a,b}: \RR \to \RR$ as the map satisfying $g_{a,b}(x) = ax + b$ for all $x\in \RR$. Given matrices $A = (a_{i,j})_{1 \leq i,j \leq 2}$ and $B = (b_{i,j})_{1 \leq i,j \leq 2}$ with non-zero off-diagonal entries, we note that 
\begin{equation} \label{conngig}
AB = BA \ \ \ \ \text{if and only if} \ \ \ \
% \  \ a_{12}b_{21}= b_{12}a_{21} \ \text{and} 
% \ (a_{11}-a_{22})b_{12}  = (b_{11} - b_{12})a_{12}  s
% a_1 (-a_4/a_3) + a_2 = b_1(-b_4/b_3) + b_2 =>
% b_3(a_2- b_2) = b_1(a_4  - b_4)
% a_1b_3 = a_3b_1
% b_{12} = b_3, a_{11} = a_2, a_{22}= b_2, a_{12}=b_1, a_4 = b_{11}, b_{12}= b_4, a_1 = a_{21}, a_3 = b_{21}
% g_{a_{2,1}, a_{1,1}} \circ g_{b_{2,1}, b_{1,1}}^{-1} = g_{a_{1,2}, a_{2,2}}\circ g_{b_{1,2}, b_{2,2}}^{-1} .
g_{a_{1,2}, a_{2,2}}^{-1} \circ g_{a_{2,1}, a_{1,1}} =  g_{b_{1,2}, b_{2,2}}^{-1} \circ g_{b_{2,1}, b_{1,1}} .
\end{equation}
Writing $\frak{G} = \{ g_{a,b} : a \in \A \setminus \{0\} \ \text{and} \ b \in \A\}$ and $E(\frak{G})$ to be the number of $g,g',h,h' \in \frak{G}$ such that $g^{-1} \circ g' = h^{-1} \circ h'$, we may amalgamate \eqref{conngig} and Lemma \ref{tn4} to deduce that
\begin{equation} \label{dpt5}
    T(\A) = E(\frak{G}) + O(|\A|^5). 
\end{equation}
Hence, estimating $T(\A)$ is intimately connected to the growth in groups phenomenon, and in particular, to bounds on energies of subsets of the affine group. Focusing on the latter, upper bounds for $E(\frak{G})$ akin to \eqref{nim} with extra logarithmic factors on the right hand side were proven by Rudnev--Shkredov \cite{RS2022}.
% In fact, akin to \eqref{nim} and \eqref{sol56}, Rudnev--Shkredov \cite{RS2022} has proved analogous bounds for $E(\frak{G})$ with extra logarithmic factors.  
Moreover, combining, in an ingenious manner, inverse results from  the higher energy method of Shkredov \cite{Shk2013} as well as some key input from the celebrated growth in groups phenomenon (see \cite{Hel2008,Mur2021}), Rudnev--Shkredov \cite{RS2022} showed that $E(\frak{G}) \ll |\A|^{5 + 1/2 - c}$ for some absolute constant $c>0$. With this in hand, \eqref{dpt5}  gives us
\begin{equation}  \label{rs18}
T(\A) \ll |\A|^{5 + 1/2 - c}.
\end{equation}
% Observe that this is stronger than the threshold estimate $T(\A) \ll |\A|^{5 + 1/2}$ that one obtains by putting together \eqref{nim} with the trivial inequality $M(\A) \ll |\A|^3$, 
% This improves upon the threshold estimate in \eqref{threshold}, see also \cite{PR2022} for generalisations of \eqref{threshold} for arbitrary subsets of affine groups as well as for further applications of such estimates. 

It is natural to ask whether such estimates extend to the setting of Theorem \ref{ptpl}, that is, whether $T(\mu_{\nu}) \ll \n{\nu}_2^{5 + c}$ holds whenever $0 \notin \supp(\nu)$, with $c>0$ being some absolute constant. While this does not seem to follow from \eqref{rs18} due to a lack of triangle inequality of the form \eqref{approxtriangle}, upon further exploiting \eqref{conngig}, we are able to prove the desired bound.

% Moreover, since our estimates for $T(\mu_{\nu})$ depend so strongly on $M(\nu)$, it is reasonable to analyse $T(\mu_{\nu})$ for measures $\nu$ which have the upper bound $M(\nu) \ll \n{\nu}_2^2$ close to being sharp. Our next two results provide concrete answers to these questions.

\begin{theorem} \label{rsk24}
  There exists some $c>0$ such that for any finitely--supported probability measure $\nu$ on $\RR$, one has 
    \[ T(\mu_{\nu}) \ll \n{\nu}_2^{5 + c}  + \nu(0)^3 .  \]
\end{theorem}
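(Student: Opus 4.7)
The plan is to translate the commutator condition via \eqref{conngig} into an energy on the affine group, and then adapt the Rudnev--Shkredov strategy to the product-measure setting $\rho = \nu \otimes \nu$ via a dyadic decomposition of $\nu$.

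First, matrices $A, B$ with at least one of the off-diagonal entries $a_{12}, a_{21}, b_{12}, b_{21}$ equal to zero contribute at most $O(\nu(0)^3 + \n{\nu}_2^6)$ to $T(\mu_\nu)$ by a direct case analysis analogous to the derivation of \eqref{dpt5}. Since $\n{\nu}_2 \leq 1$, the $\n{\nu}_2^6$ term is absorbed into $\n{\nu}_2^{5+c}$ for any $c \leq 1$, so I focus on pairs with all four off-diagonals nonzero. For these, \eqref{conngig} reformulates $AB = BA$ as the pair of scalar identities $a_{21}/a_{12} = b_{21}/b_{12}$ and $(a_{11}-a_{22})/a_{12} = (b_{11}-b_{22})/b_{12}$, so that the corresponding contribution to $T(\mu_\nu)$ equals $E(\rho) = \sum \rho(g_1)\rho(g_2)\rho(g_3)\rho(g_4) \mathds{1}_{g_1 g_2^{-1} = g_3 g_4^{-1}}$, where $\rho(g_{a,b}) = \nu(a)\nu(b)$ for $a \neq 0$. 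Parameterising $g_1 g_2^{-1} = g_{\lambda,\mu}$ yields the factorisation $E(\rho) = \sum_\lambda M(\lambda)^2 R(\lambda)$, with $M(\lambda) = \sum_a \nu(a)\nu(\lambda a)$ and $R(\lambda) = \sum_t r(t)r(\lambda t)$, where $r = \nu \ast \check\nu$.

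Next, dyadically decompose $\nu$ into level sets $\A_j = \{x : 2^{-j-1} < \nu(x) \leq 2^{-j}\}$, which satisfy $|\A_j| \leq \min(2^{j+1}, 2^{2j+2}\n{\nu}_2^2)$ by $\n{\nu}_1 = 1$ and $\n{\nu}_2^2 \geq 2^{-2j-2}|\A_j|$ respectively. Writing $\rho = \sum_{j,k} \rho_{j,k}$ with $\rho_{j,k} \leq 2^{-j-k}\mathds{1}_{S_{j,k}}$ for $S_{j,k} = \{g_{a,b} : a \in \A_j, b \in \A_k\}$, the four-linear Cauchy--Schwarz inequality for convolutions on the affine group gives $E(\rho)^{1/4} \leq \sum_{j,k} 2^{-j-k} E(\mathds{1}_{S_{j,k}})^{1/4}$. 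Applying \eqref{rs18} together with \eqref{dpt5} gives $E(\mathds{1}_{S_{j,k}}) \leq E(\frak{G}_{\A_j \cup \A_k}) \ll (|\A_j|+|\A_k|)^{11/2-c_0}$ for some absolute $c_0 > 0$. The aim is then to sum this dyadic series and extract a polynomial gain of $\n{\nu}_2^c$ beyond the trivial exponent $5$ in the final bound for $E(\rho)$.

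The main obstacle is precisely this dyadic summation: the symmetric Rudnev--Shkredov bound on $\A_j \cup \A_k$ is wasteful for highly asymmetric pieces (with $|\A_j| \ll |\A_k|$ or vice versa), because it treats $\A_j \cup \A_k$ as a single grid rather than exploiting the product structure $S_{j,k} = \A_j \times \A_k$. The way around is to use, on asymmetric pieces, the sharper bound $E(\mathds{1}_{S_{j,k}}) \leq M(\A_j) \cdot E_+(\A_k)$, which follows from the same factorisation argument $E = \sum_\lambda M(\lambda)^2 R(\lambda)$ applied to indicator functions. Controlling the additive energies of the asymmetric pieces via sum-product inputs (such as those underlying Proposition \ref{diag} or the low-energy decomposition in Corollary \ref{dve}), and performing the dyadic sum using the interpolation $|\A_j| \leq \min(2^{j+1}, 2^{2j+2}\n{\nu}_2^2)$, one obtains the required polynomial gain at the critical scale $j \sim J^* := \log_2 \n{\nu}_2^{-2}$ where the two bounds on $|\A_j|$ coincide, yielding $E(\rho) \ll \n{\nu}_2^{5+c}$. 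This delicate balancing of symmetric and asymmetric affine-group energy estimates, leveraging both coordinates of the representation in \eqref{conngig}, is precisely the ``further exploitation of \eqref{conngig}" alluded to in the excerpt.
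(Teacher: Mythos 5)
Your reduction to the affine group, the dyadic decomposition of $\nu$ into level sets $\A_j$, and the use of a Lemma \ref{wtun}--type quadruple triangle inequality all match the paper's strategy in \S 7; the gap lies in how you bound the individual pieces $S_{j,k}$, and it is a genuine one. First, the symmetric input \eqref{rs18} applied to $\frak{G}_{\A_j\cup\A_k}$ is only adequate when $|\A_j|$ and $|\A_k|$ are almost exactly equal. Writing $m=|\A_j|$ (off-diagonal entries, weight $\approx 2^{-j}$) and $n=|\A_k|$ (diagonal entries, weight $\approx 2^{-k}$), take for instance $n\approx N$ points of mass $\approx N^{-1}$ and $m\approx N^{1/2}$ points of mass $\approx N^{-3/4}$, so that $\n{\nu}_2^2\approx N^{-1}$; then your bound gives $2^{-4j-4k}(m+n)^{11/2-c_0}\approx N^{-3/2-c_0}$, which exceeds the target $\n{\nu}_2^{5+c}\approx N^{-5/2-c/2}$ by a power of $N$. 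A short optimisation shows the union bound only closes when $m\gtrsim n^{1-O(c_0)}$, so essentially the whole sum lives in your ``asymmetric'' regime. Second, your treatment of that regime does not work as described: the bound $E(\mathds{1}_{S_{j,k}})\leq M(\A_j)E(\A_k)$ is correct, but $\A_j,\A_k$ are level sets of an arbitrary measure, so Proposition \ref{diag} (which needs small multiplicative doubling) is unavailable, and Corollary \ref{dve} cannot be made to bite: it splits a set into a small--additive-energy part and a small--multiplicative-energy part, while your factorisation only sees $M$ of the off-diagonal set and $E$ of the diagonal set. In the cross configuration --- off-diagonal entries forming a geometric progression (small $E$, but $M\approx m^3$) and diagonal entries forming an arithmetic progression (small $M$, but $E\approx n^3$) --- neither factor is small, and $M(\A_j)E(\A_k)\approx m^3n^3$ is the trivial bound, which in the example above yields only $\approx\n{\nu}_2^{5}$ with no polynomial gain. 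The true energy in that configuration is in fact $\approx m^3n^2$, but none of the tools you list can detect this.

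What is missing is precisely an \emph{asymmetric} estimate for the mixed count $\sum \mathds{1}_{c_1/c_2=c_3/c_4}\mathds{1}_{c_4(d_1-d_2)=c_2(d_3-d_4)}$. The paper supplies two such inputs: in the comparable range $|\A_k|^{1/2}\leq|\A_j|\leq|\A_k|^2$ it invokes the asymmetric Rudnev--Shkredov energy bound, Lemma \ref{rudshk} (i.e.\ \cite[Lemma 20]{RS2022}), giving $\ll|\A_j|^{5/2-c}|\A_k|^3$, and outside this range it uses the Szemer\'edi--Trotter based estimate \eqref{asym1}, $\ll|\A_j|^{5/2}|\A_k|^3+|\A_j|^3|\A_k|^2$, where the size disparity itself provides the saving (note the second term is exactly what the GP/AP example requires); no additive or multiplicative energy gain on the level sets is needed anywhere. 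Your dyadic bookkeeping via $2^{-2j}|\A_j|\ll\n{\nu}_2^2$ and $2^{-j}|\A_j|\ll 1$ is fine, and substituting these two inputs for your step (a)/(b) would essentially reproduce the paper's proof; as written, however, the argument does not establish the theorem.
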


Theorem \ref{rsk24} extends \eqref{rs18} to arbitrary finitely-supported probability measures. Our proof of Theorem \ref{rsk24} utilises results from the above-mentioned work of Rudnev--Shkredov \cite{RS2022} and further exploits the relationship between $T(\A)$ and energies over affine groups. In particular, given a suitable decomposition of $\nu$ as $\nu = \sum_{i=1}^r \nu_i$, where $r \ll \log(2/\n{\nu}_2)$ and 
 each $\nu_i: \RR \to [0,1]$ is either almost-flat or very small in $l^{\infty}$ norm, it would be ideal to have a triangle inequality type bound of the shape 
\begin{equation}  \label{approxtriangle}
T(\mu_{\nu}) \ll r^{O(1)} \max_{1 \leq i \leq r} T(\mu_{\nu_i}). 
\end{equation}
While we are unable to prove such an inequality, we can bound $T(\mu_{\nu})$ by weighted multiplicative energies of a specific family of affine transformations over $\RR$, which in turn allows us to prove an approximate version of \eqref{approxtriangle}, see \eqref{hu1} and the remark at the end of \S7. Applying \eqref{hu1}, we are now required to bound pairs of commuting matrices with off-diagonal entries and diagonal entries arising from different subsets of $\RR$. We estimate these by means of \cite[Lemma $20$]{RS2022} as well as modified versions of Theorem \ref{ptpl}, see \eqref{asym1}.

Finding optimal upper bounds for $E(\frak{G})$ is an important question in the topic of growth in groups and has applications to many combinatorial geometric problems and sum-product type estimates, see \cite{PR2022} for further details and references. 
Moreover, while Theorem \ref{rsk24} employs ideas from growth in groups to estimate $T(\mu_{\nu})$, it would be interesting to incorporate, via \eqref{conngig}, the techniques involved in the study of commutating matrices into the subject of growth in groups, with the aim of obtaining stronger upper bounds for $E(\frak{G})$.  Considering Corollary \ref{dmf2}, Theorem \ref{multbd} and \eqref{rs18}, it is natural to speculate the following.

\begin{Conjecture} \label{nhtl}
For all $\varepsilon>0$ and all non-empty, finite sets $\A \subset \RR$, one has 
   \[ T(\A) \ll_{\varepsilon} |\A|^{5 + \varepsilon}. \]
\end{Conjecture}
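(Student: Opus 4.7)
The plan is to use the bridge \eqref{dpt5}, which states $T(\A) = E(\frak{G}) + O(|\A|^{5})$, to reduce Conjecture \ref{nhtl} to the almost-optimal affine-group energy bound $E(\frak{G}) \ll_{\varepsilon} |\A|^{5+\varepsilon}$, where $\frak{G} = \{g_{a,b} : a \in \A \setminus \{0\}, b \in \A\}$. The starting point is the Rudnev--Shkredov estimate \eqref{rs18}, which already delivers a power saving $|\A|^{1/2-c}$ over the trivial bound; a natural strategy is to iterate their higher-energy method while sharpening the Szemer\'{e}di--Trotter input that drives it. Concretely, one would aim to show that in the extremal regime $E(\frak{G}) \gs |\A|^{5+1/2-c}$, a constant proportion of $\frak{G}$ concentrates on cosets of an abelian subgroup, at which point Corollary \ref{dmf2} applies to the structured piece and allows the argument to be closed by induction on $|\A|$.

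A complementary combinatorial route dovetails the low-energy decomposition of Corollary \ref{dve} with a bilinear extension of Theorem \ref{ptpl}. Writing $\A = \mathcal{B} \cup \mathcal{C}$ as in Corollary \ref{dve} and partitioning each matrix $X \in \mathscr{A}$ according to which of its four entries lie in $\mathcal{B}$ versus $\mathcal{C}$, the pure $\mathcal{B}$-contribution is already controlled by $|\A|^{5+C\delta}$, so the remaining work lies in the mixed and pure $\mathcal{C}$ contributions. For these I would first prove an asymmetric version of Theorem \ref{ptpl}, allowing distinct product measures $\nu_{\rm diag}$ and $\nu_{\rm off}$ for the diagonal and off-diagonal entries, and yielding a bound of the shape
\begin{equation*}
T(\mu) \ll \n{\nu_{\rm diag}}_{2}^{2} \n{\nu_{\rm off}}_{2}^{2} M(\nu_{\rm off})^{1/2} + (\text{lower order}).
\end{equation*}
Instantiated with $\nu_{\rm off}$ and $\nu_{\rm diag}$ chosen from $\{\mathcal{B}, \mathcal{C}\}$, this would convert the small additive energy of $\mathcal{B}$ and the slightly subtrivial multiplicative energy of $\mathcal{C}$ into a near-optimal bound on every mixed term.

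The hard part is the pure $\mathcal{C}$-contribution, where $\mathcal{C}$ has subtrivial multiplicative energy $M(\mathcal{C}) \ll |\mathcal{C}|^{3-\delta}$ but can still exhibit polynomially large multiplicative doubling. Here \eqref{nim} only yields $|\mathcal{C}|^{5+1/2-\delta}$, essentially matching the loss in \eqref{rs18}, and neither Proposition \ref{diag} nor Theorem \ref{multbd} applies directly because $\mathcal{C}$ is not multiplicatively structured. Breaking this barrier would likely require either a quantitative Elekes--R\'{o}nyai or Elekes--Szab\'{o} theorem tailored to the rational surface $a_{1,2}(b_{2,1} - b_{1,1}) = b_{1,2}(a_{2,1} - a_{1,1})$ implicit in \eqref{conngig}, or an iterated Balog--Szemer\'{e}di--Gowers scheme that peels off successively smaller multiplicatively structured subsets of $\mathcal{C}$ while controlling the error at each stage via a conjectural strengthening of Proposition \ref{diag}. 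I expect this sum-product-flavoured step---ruling out sets that are simultaneously far from both additive and multiplicative structure yet still sustain near-extremal numbers of commuting pairs---to be the decisive obstruction, and the one at which the present methods appear to stall.
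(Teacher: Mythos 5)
The statement you are asked about is Conjecture \ref{nhtl}: the paper does not prove it, and offers it precisely as an open problem, the best known bound towards it being the Rudnev--Shkredov estimate \eqref{rs18}, $T(\A) \ll |\A|^{5+1/2-c}$. Your text is likewise not a proof but a research programme, and you say so yourself in the final sentence; the decisive step is missing. Concretely, in your first route the claim that in the extremal regime $E(\frak{G}) \gs |\A|^{5+1/2-c}$ a constant proportion of $\frak{G}$ concentrates on cosets of an abelian subgroup of the affine group is exactly the kind of inverse theorem that is not available: the higher-energy method of \cite{Shk2013, RS2022} yields the fixed power saving in \eqref{rs18}, and no known iteration of it self-improves to an $|\A|^{\varepsilon}$ loss, so the induction you propose has no base mechanism to close on.

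In your second route, the quantitative trade-off is against you. Corollary \ref{dve} gives $T(\mathcal{B}) \ll_{\delta} |\mathcal{B}|^{5+C\delta}$ and $T(\mathcal{C}) \ll_{\delta} |\mathcal{C}|^{5+1/2-\delta}$; the asymmetric bound you want for the mixed terms essentially exists in the paper as \eqref{asym1} and can indeed be pushed through, but the pure $\mathcal{C}$-contribution is stuck at exponent $5+1/2-\delta$, and one cannot take $\delta$ close to $1/2$ without the $\mathcal{B}$-term exponent $5+C\delta$ exceeding $5+\varepsilon$ (the constant $C$ coming from Proposition \ref{diag} and Lemma \ref{unwtdiag} is large and not under your control). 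Optimising over $\delta$ recovers only a bound of the shape $|\A|^{5+1/2-c'}$, i.e.\ nothing beyond \eqref{rs18}. The step you flag as the obstruction --- ruling out sets that are simultaneously far from additive and multiplicative structure yet support near-extremal numbers of commuting pairs, e.g.\ via an Elekes--Szab\'{o}-type statement for the surface in \eqref{conngig} or a strengthened Proposition \ref{diag} --- is precisely the content of the conjecture, so the proposal has a genuine gap and should be read as a plan of attack, not a proof.
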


Noting \eqref{dpt5}, this is equivalent to proving that $E(\frak{G}) \ll_{\varepsilon} |\frak{G}|^{5/2 + \varepsilon}$ for all $\varepsilon >0$ whenever $|\A| \geq 2$. A more general conjecture about energies of sets of the form $\frak{G}$ is presented in \cite{PR2022}.

Revisiting the setting of Theorem \ref{t1}, we note that there are many  examples of finitely-supported probability measures on $\M_2(\RR)$ such that $T(\mu) \gg \delta(\mu).$ In order to see this, choose some integer $N \geq 2$ and define
\begin{equation} \label{examplesharp}
    \mathscr{A} = \bigg\{ \begin{pmatrix}
    n & 2^j\\
    2^k & n
\end{pmatrix} :  n,j,k \in [N]  \bigg\} \ \ \text{and} \ \ \mu = N^{-3} \mathds{1}_{\mathscr{A}}. 
\end{equation} 
One can deduce that $\delta(\mu) \leq 1/N,$ say, via a straightforward application of the Schwarz--Zippel lemma. Moreover, given any matrices $X, Y \in \mathscr{A}$ such that
\[   X =   \begin{pmatrix}
    n & 2^j\\
    2^k & n
\end{pmatrix}  \ \ \text{and}  \ \ Y = \begin{pmatrix}
    n' & 2^{j'}\\
    2^{k'} & n'
\end{pmatrix}  \]
for some $n,n',j,j',k,k' \in [N]$, we have $XY = YX$ if and only if $j-j' = k - k'$. Thus,
\begin{equation}  \label{jrlk}
T(\mu) = N^{-6} \sum_{n,n',j,j',k,k' \in [N]} \mathds{1}_{j-j' = k-k'} =  \bigg( \frac{2}{3} - o(1) \bigg) \frac{1}{N} \geq   \bigg( \frac{2}{3} - o(1) \bigg)\delta(\mu).  
\end{equation}

We will now provide a brief outline of our paper. We employ \S2 to present some preliminary results from additive combinatorics, number theory and discrete geometry that we will require for our proofs of Theorems \ref{ptpl}, \ref{multbd} and \ref{rsk24}. We present our proof of Theorem \ref{t1} in \S3. In \S4, we employ incidence geometric techniques to prove Theorem \ref{ptpl}. We apply similar ideas in \S5  to prove Corollary \ref{dmf2}. We dedicate \S6 to utilising various results from additive combinatorics and number theory to prove Proposition \ref{diag} and then employing it to derive Theorem \ref{multbd}.
 In \S7, we elaborate on the connection between $T(\mu_{\nu})$ and energies of affine transformations over $\RR$, and we then present our proof of Theorem \ref{rsk24}.

\textbf{Notation.} We will use Vinogradov notation, that is, we write $Y \ll_{z} X$, or equivalently $Y =O_z(X)$, to mean that $|Y| \leq C_z X$ where $C_z>0$ is some constant depending on the parameter $z$. 
%We write $X \asymp Y$ to denote that $Y \ll X \ll Y$. 
For any set $X$ and any $k \in \mathbb{N}$, we write $X^k = \{(x_1, \dots, x_k) : x_1, \dots, x_k \in X\}$. For every $d \in \mathbb{N}$, we use boldface to denote vectors $\vec{v} = (v_1, \dots, v_d) \in \RR^d$.  Given $\lambda \in \RR \setminus \{0\}$, a matrix $(a_{i,j})_{1 \leq i, j \leq 2} \in \M_2(\RR)$ and a set $\A \subset \RR$, we let $\lambda (a_{i,j})_{1 \leq i, j \leq 2} = (\lambda a_{i,j})_{1 \leq i, j \leq 2}$ and $\lambda \cdot \A = \{\lambda a : a \in \A\}$. 

\textbf{Acknowledgements.} We would like to thank Thomas Bloom, Tim Browning, Jonathan Chapman, Sam Chow, Sean Eberhard, Yifan Jing and Misha Rudnev for helpful comments.

%---------------------------------------------------------------------------------------------------------------------------
%---------------------------------------------------------------------------------------------------------------------------
%---------------------------------------------------------------------------------------------------------------------------
%---------------------------------------------------------------------------------------------------------------------------
%---------------------------------------------------------------------------------------------------------------------------
%---------------------------------------------------------------------------------------------------------------------------

\section{Preliminaries}

We begin by introducing some notation. Hence, given finite, non-empty sets $\mathcal{A}, \mathcal{B} \subset \RR$, we define the sumset $\A + \mathcal{B}$ and the product set $\mathcal{A} \cdot \mathcal{B}$  as
\[  \A + \mathcal{B} = \{ a+ b: a \in \A, b \in \mathcal{B}\} \ \text{and} \   \A \cdot \mathcal{B} = \{ a\cdot b: a \in \A, b \in \mathcal{B}\}. \]
Similarly, we denote the difference set 
\[ \mathcal{A} - \mathcal{B} =  \{ a - b: a \in \A, b \in \mathcal{B}\}. \]
Moreover, when $0 \notin \mathcal{B}$, we define the quotient set 
\[ \mathcal{A} / \mathcal{B} = \{ a/b : a \in \A, b \in \mathcal{B}\}. \]
Next, we recall that when $\A \subset \RR$ is a finite, non-empty set, then 
\[ E(\A) = \sum_{a_1, \dots, a_4 \in \A} \mathds{1}_{a_1 + a_2 = a_3 + a_4} \ \ \text{and} \ \ M(\A) = \sum_{a_1, \dots, a_4 \in \A \setminus \{0\}} \mathds{1}_{a_1/a_2 = a_3/ a_4}.  \]
When $\A$ is the empty set $\emptyset$, we set $|\A| = T(\A) = E(\A) = M(\A) = 0$. Given $p \geq 1$ and some finite, non-empty set $X$ and some function $f: X \to [0, \infty)$, we define
\[ \n{f}_p^p = \sum_{x \in X} f(x)^p \ \ \text{and} \ \ \n{f}_{\infty} = \max_{x \in X} f(x).  \]
Moreover, we also write $\supp(f)$ to denote the set $\{ x \in X : f(x) \neq 0\}$. For every finite, non-empty set $X \subset \M_2(\RR)$, we define ${\rm span}(X)$ to be the additive vector space spanned by elements of $X$ over $\RR$.

In our proof of Theorem \ref{ptpl}, we will use the following weighted version of the Szemer\'{e}di--Trotter theorem. 

\begin{lemma} \label{wtst}
    Let $P \subset \RR^2$ be a finite, non-empty set, let $\L$ be a finite, non-empty set of lines in $\RR^2$, let $w_P: P \to (0,\infty]$ and $w_{\L}: \L \to (0,\infty]$ be functions. Then 
    \begin{align*}
        \sum_{p \in P, l \in L} w_P(p) w_{\L}(l) \mathds{1}_{p \in l} 
        \ll & \n{w_P}_2^{2/3} \n{w_P}_1^{1/3}\n{w_{\L}}_2^{2/3} \n{w_{\L}}_1^{1/3} \\
         & + \n{w_P}_1 \n{w_{\L}}_{\infty} + \n{w_{P}}_{\infty} \n{w_{\L}}_{1}. 
    \end{align*}
\end{lemma}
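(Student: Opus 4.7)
The plan is to reduce to the classical (unweighted) Szemer\'{e}di--Trotter theorem via the layer-cake representation of the weights, and then to control the resulting integral using the monotonicity of the distribution function. Writing $w_P(p) = \int_0^\infty \mathds{1}_{w_P(p) \geq \alpha} \, d\alpha$ and similarly for $w_{\L}$, the left-hand side becomes
\[  \sum_{p \in P, l \in \L} w_P(p) w_{\L}(l) \mathds{1}_{p \in l} \; = \; \int_0^\infty \int_0^\infty I(P_{\geq \alpha}, \L_{\geq \beta}) \, d\alpha \, d\beta , \]
where $P_{\geq \alpha} = \{p \in P : w_P(p) \geq \alpha\}$, $\L_{\geq \beta} = \{l \in \L : w_{\L}(l) \geq \beta\}$, and $I(\cdot, \cdot)$ counts unweighted point-line incidences. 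Setting $n(\alpha) = |P_{\geq \alpha}|$ and $m(\beta) = |\L_{\geq \beta}|$, the Szemer\'{e}di--Trotter theorem yields $I(P_{\geq \alpha}, \L_{\geq \beta}) \ll n(\alpha)^{2/3} m(\beta)^{2/3} + n(\alpha) + m(\beta)$. The two linear error terms integrate trivially by observing that $I = 0$ outside $[0, \n{w_P}_\infty] \times [0, \n{w_{\L}}_\infty]$, producing exactly the two summands $\n{w_P}_1 \n{w_{\L}}_\infty$ and $\n{w_P}_\infty \n{w_{\L}}_1$ on the right-hand side.

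The heart of the argument is to establish
\[  \int_0^\infty n(\alpha)^{2/3} \, d\alpha \; \ll \; \n{w_P}_1^{1/3} \n{w_P}_2^{2/3} , \]
from which the cross term follows by Fubini together with the symmetric bound for $m$. Since $n$ is non-increasing, comparing $n(\alpha)$ with its own integral on $[0, \alpha]$ yields three pointwise bounds: the trivial $n(\alpha) \leq |P|$, the first-moment bound $\alpha \, n(\alpha) \leq \n{w_P}_1$, and the second-moment bound $\alpha^2 \, n(\alpha) \leq \n{w_P}_2^2$; the latter two follow from the layer-cake identities $\int_0^\infty n(\alpha) \, d\alpha = \n{w_P}_1$ and $2\int_0^\infty \alpha \, n(\alpha) \, d\alpha = \n{w_P}_2^2$. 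I would then split the range of integration into three intervals at the cutoffs $\alpha_1 = \n{w_P}_1/|P|$ and $\alpha_* = \n{w_P}_2^2/\n{w_P}_1$, which satisfy $\alpha_1 \leq \alpha_*$ precisely because $\n{w_P}_1^2 \leq |P| \, \n{w_P}_2^2$ by Cauchy--Schwarz. On each interval one applies the matching pointwise bound and integrates: the first regime yields $|P|^{-1/3} \n{w_P}_1$, again controlled by Cauchy--Schwarz; the middle and third regimes produce the antiderivatives $3\alpha^{1/3}$ and $-3\alpha^{-1/3}$ of $\alpha^{-2/3}$ and $\alpha^{-4/3}$ respectively, whose values at the cutoff $\alpha_*$ simplify by direct computation to $3\n{w_P}_1^{1/3} \n{w_P}_2^{2/3}$.

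The main technical subtlety is the elimination of logarithmic factors. A naive dyadic decomposition of the weights followed by the unweighted Szemer\'{e}di--Trotter bound on each level pair and a three-term H\"{o}lder summation introduces an extraneous $(\log)^{1/3}$ factor from the number of occupied dyadic scales, which the Vinogradov notation in the statement does not permit. The continuous layer-cake approach, combined with the three monotonicity-derived bounds on $n(\alpha)$, absorbs this logarithm cleanly and yields the exact form of the right-hand side asserted in the lemma.
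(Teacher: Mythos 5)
Your argument is correct, but it is not the paper's route: the paper does not prove Lemma \ref{wtst} from first principles at all — it quotes the integer-weight version of the weighted Szemer\'{e}di--Trotter theorem (Theorem 48 of Lund's thesis, see also \cite[Lemma 3.1]{Mu2023}) and reduces the general case to it by dilating the weight functions and rounding them to the nearest power of $2$. You instead derive the bound directly from the classical unweighted Szemer\'{e}di--Trotter theorem via the layer-cake identity, and the details check out: the three pointwise bounds $n(\alpha)\le |P|$, $\alpha\, n(\alpha)\le \n{w_P}_1$ and $\alpha^2 n(\alpha)\le \n{w_P}_2^2$ are valid, the cutoffs $\n{w_P}_1/|P|\le \n{w_P}_2^2/\n{w_P}_1$ are ordered by Cauchy--Schwarz, each of the three regimes integrates to $O(\n{w_P}_1^{1/3}\n{w_P}_2^{2/3})$ (for the lowest regime one uses $\n{w_P}_1\le |P|^{1/2}\n{w_P}_2$ once more), and the linear error terms are correctly confined to the rectangle $[0,\n{w_P}_{\infty}]\times[0,\n{w_{\L}}_{\infty}]$ outside of which the incidence count vanishes, giving exactly $\n{w_P}_1\n{w_{\L}}_{\infty}+\n{w_P}_{\infty}\n{w_{\L}}_1$. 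What your approach buys is a self-contained, log-free proof that handles arbitrary positive real weights in one stroke — your diagnosis that a naive dyadic decomposition plus H\"{o}lder picks up a power of $\log$ is accurate, and your continuous splitting is precisely the device that removes it; what the paper's approach buys is brevity, outsourcing the same threshold/summation work to the cited integer-weight statement at the cost of the slightly fussy dilation-and-rounding reduction. One trivial remark: since the weights are permitted to take the value $+\infty$, in that case the right-hand side is infinite and there is nothing to prove, so both arguments may as well assume the weights are finite.
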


A version of this where $w_P(P)$ and $w_{\L}(\L)$ are finite subsets of $\mathbb{N}$ was proved in \cite[Theorem 48]{Lu2017}, see also \cite[Lemma 3.1]{Mu2023}. The more general case can be derived from this by appropriately dilating the weight functions $w_P$ and $w_{\L}$ and then approximating them with the closest power of $2$.

% The next result that we will employ is the Balog--Szemer\'{e}di--Gowers theorem. In order to motivate this, note that a standard application of the Cauchy--Schwarz inequality implies that $M(\mathcal{A}) |\mathcal{A} \cdot \mathcal{A}|  \geq |\mathcal{A}|^4$ for any finite, non-empty set $\mathcal{A} \subset \RR \setminus \{0\}$. Thus, if $|\mathcal{A} \cdot \mathcal{A}| \leq K|\mathcal{A}|$, then $M(\mathcal{A}) \geq |\A|^3/K$. While the converse does not hold true, the Balog--Szemer\'{e}di--Gowers theorem \cite{Gow1998} offers the next best alternative. We record below a quantitative weighted version of this which can be derived by combining the proof ideas of \cite[Lemma $1.4.1$]{Tao2015} along with work of Schoen \cite{Sch2015} and the Pl\"{u}nnecke--Ruzsa inequality \cite{Pe2012}.

% \begin{lemma} \label{bsg}
% Let $K \geq 1$ be a real number, let $\nu : \mathbb{R} \to [0,1]$ be a finitely-supported function such that $M(\nu) \geq \n{\nu}_2^2/K$ and $\n{\nu}_1 \leq 1$. Then there exists some set $\A \subset \supp(\nu)$ such that
% % \begin{equation} \label{bsgcon}
% \[ |\A| \gg \frac{1}{K^{O(1)} \n{\nu}_2^2} \ \ \text{and} \ \ |\A \cdot \A| \ll K^{O(1)}|\A| \ \ \text{and} \ \ \nu(a) \geq \n{\nu}_2^2/K^2 \ \ \text{for every} \ \ a \in \A. \]
% % \end{equation}
% % let $\mathcal{A} \subset \RR$ be a  finite, non-empty set such that $M(\mathcal{A}) = |\mathcal{A}|^3/K$. Then there exists some $\mathcal{A}' \subseteq \mathcal{A}$ such that
% % \[   |\mathcal{A'}| \gg |\mathcal{A}|/K \ \ \text{and} \ \ |\mathcal{A}' \cdot \mathcal{A}'| \ll K^8 |\mathcal{A}'|. \]
% \end{lemma}

We now record the well-known weak polynomial Freiman-Ruzsa conjecture, which was very recently proven to be true by Gowers--Green--Manners--Tao, see \cite[Theorem 1.3]{GGMT2023}. 

\begin{lemma} \label{wkpf}
Let $V= \mathbb{Z}^D$ for some $D\geq 1$, let $\A \subseteq V$ be a finite set with $|\A+\A| \leq K|\A|$ for some $K \geq 1$. Then there exists $\A_1 \subseteq \A$ with $|\A_1| > |\A|/K^C$ and elements $\xi_1, \dots, \xi_d \in V$ for some $d < C \log 2K$ such that
\[ \A_1 \subset \mathbb{Z} \xi_1 + \dots + \mathbb{Z} \xi_d, \]
where $C>0$ is some absolute constant. 
\end{lemma}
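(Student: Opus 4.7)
The plan is to invoke the recent resolution of the weak polynomial Freiman--Ruzsa conjecture by Gowers--Green--Manners--Tao, which is stated in precisely the form required here. My sketch will therefore not attempt to reprove their deep result, but rather to indicate the overall strategy: pass from $\ZZ^D$ to a finite abelian group (essentially $\FF_2^n$) via a Freiman modelling lemma, apply PFR in that finite setting, and pull the resulting coset-of-subspace structure back to $\ZZ^D$, where it becomes a sublattice of bounded rank.

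More explicitly, the standard route is as follows. First, apply Pl\"unnecke--Ruzsa to control iterated sumsets, yielding $|k\A - k\A| \leq K^{O(k)} |\A|$ for small $k$, since the subsequent modelling step requires sumset growth to be controlled several steps deep. Second, by Ruzsa's modelling lemma, replace $\A$ by a Freiman $2$-isomorphic subset $\A' \subseteq \FF_2^n$ after passing to a subset of density at least $K^{-O(1)}$, where $n \leq \log_2 |\A| + O(\log K)$. Third, invoke the $\FF_2$-PFR to find a further dense subset of $\A'$ contained in a coset of a subspace $H \leq \FF_2^n$ whose codimension in the ambient space is at most $O(\log K)$. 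Finally, pull this structure back through the inverse Freiman isomorphism to obtain a subset $\A_1 \subseteq \A$ lying in a coset of a sublattice $L \leq \ZZ^D$ of rank $O(\log K)$; choosing generators $\xi_1, \dots, \xi_d$ of $L$ and absorbing the coset translate into $L$ itself (at the mild cost of one extra generator) yields the stated containment $\A_1 \subset \ZZ\xi_1 + \dots + \ZZ\xi_d$.

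The main obstacle, and the reason this is a headline theorem rather than a folklore corollary of Freiman's theorem, is the logarithmic rank bound $d < C \log 2K$. Freiman's classical theorem, even with Chang's covering refinement, and Sanders' quasipolynomial Bogolyubov-type arguments deliver only polynomial or quasipolynomial dependence of the rank on $K$. The logarithmic bound is equivalent to the Marton conjecture, whose proof requires the full entropic compression machinery of Gowers--Green--Manners--Tao, in which one works with a random variable uniformly distributed on $\A$ and iteratively refines it via symmetrisation and conditioning until it concentrates on a coset of a small subgroup. For the purposes of the present paper I would simply treat Lemma \ref{wkpf} as a black-box input, since it is used only as a structural ingredient in the proof of Proposition \ref{diag}, and reproducing the entropic argument is neither necessary here nor feasible in a short sketch.
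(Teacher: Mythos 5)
The paper gives no proof of this lemma at all: it is recorded as the (now proven) weak polynomial Freiman--Ruzsa conjecture and simply cited as \cite[Theorem 1.3]{GGMT2023}, so your decision to treat it as a black-box input from Gowers--Green--Manners--Tao is exactly the paper's approach. One caveat about your supplementary sketch: the step of modelling $\A$ by a Freiman $2$-isomorphic subset of $\FF_2^n$ cannot work, since any two points $a,b \in \FF_2^n$ satisfy $a+a=b+b$, which would force $2x=2y$ for their preimages in the torsion-free group $\ZZ^D$; the actual deduction in \cite{GGMT2023} of the integer statement from the $\FF_2$ theorem goes through their entropic machinery (a fibring/descent via the mod-$2$ projection) rather than Ruzsa modelling --- but since you, like the paper, only quote the result, this does not affect its use here.
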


% The next result that we will need is a quantification of the ``few products, many sums" phenomenon. The first result of this type was proven by Chang \cite{Ch2003} and this was further quantitatively strengthened by Bourgain--Chang \cite{BC2004} in their breakthrough work on $k$-fold sumset-product set estimates, see also further improvements and simplifications in subsequent work of P\'{a}lv\"{o}lgyi--Zhelezov \cite{PZ2021}. In order to state the required result, we define, for every finite, non-empty set $\mathcal{A} \subset \RR$, the additive energy
% \[ E(\mathcal{A}) = \sum_{a_1, \dots, a_4 \in \A} \mathds{1}_{a_1 + a_2 = a_3 + a_4}. \]
% We now state \cite[Theorem 1.6]{Mu2024b}.

% \begin{lemma} \label{mdg}
%     Let $\mathcal{A} \subset \QQ$ be a finite, non-empty set such that $|\mathcal{A} \cdot \mathcal{A}| \leq K|\mathcal{A}|$. Then 
%     \[  E(\mathcal{A}) \ll  (K\log (2|\mathcal{A}|))^{O(1)} |\mathcal{A}|^2 . \]
% \end{lemma}

% It is precisely our utilisation of this result that necessitates that $\mathcal{A}$ be a set of rational numbers in the hypothesis of Theorem \ref{ledap}. As previously mentioned, a variation of Lemma \ref{mdg} can be shown to hold for finite, non-empty sets $\mathcal{A} \subset \RR$ by amalgamating the ideas in \cite{Mu2024b}, \cite{Mu2024a} along with the recent breakthrough results in \cite{GGMT2023}.

This will be used in combination with the following quantitative refinement of the subspace theorem of Evertse, Schmidt and Schlikewei \cite{ESS2002} as proved by Amoroso--Viada \cite{AV2009}. 

\begin{lemma} \label{subs}
Let $l, r \in \mathbb{N}$, let $\Gamma$ be a subgroup of $\mathbb{C}^{\times}$ of finite rank $r$, let $c_1, \dots, c_{l} \in \mathbb{C}^{\times}$. Then the number of solutions of the equation
\[ c_1 z_1 + \dots + c_l z_l = 1, \]
with $z_i \in \Gamma$ and no subsum on the left hand side vanishing is at most $(8l)^{4l^4(l + lr + 1)}$.
\end{lemma}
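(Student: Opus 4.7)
The plan is to invoke this as a direct citation from the work of Amoroso--Viada \cite{AV2009}. The statement is a quantitative refinement of the classical $S$-unit equation theorem of Evertse--Schlickewei--Schmidt \cite{ESS2002}, and the explicit bound $(8l)^{4l^4(l+lr+1)}$ is extracted verbatim from their paper. There is no independent argument to present here; one simply verifies that the hypotheses of our formulation match those in \cite{AV2009}, which is immediate since both record the same three ingredients: a subgroup of $\mathbb{C}^{\times}$ of finite rank $r$, an inhomogeneous linear form in $l$ variables, and the standard nondegeneracy condition that no proper subsum vanishes.

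Sketching the underlying strategy briefly: after extending $\Gamma$ to its division closure inside $\mathbb{C}^{\times}$ and passing to a finitely generated subgroup of the same rank $r$, one reduces the problem to counting $S$-unit solutions in a suitable number field. The nondegeneracy hypothesis, that no proper subsum on the left hand side of $c_1 z_1 + \cdots + c_l z_l = 1$ vanishes, means that the projective point $[c_1 z_1 : \cdots : c_l z_l : -1] \in \mathbb{P}^l$ avoids all coordinate hyperplanes. The quantitative Subspace Theorem then traps solutions inside a finite union of proper linear subspaces of $\mathbb{P}^l$, after which an induction on $l$, combined with a gap principle that absorbs the rank $r$ of $\Gamma$, yields the claimed explicit exponential bound.

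The main obstacle in any direct attempt to reprove such an estimate would be the uniformity: the bound must depend only on $l$ and $r$, and not on the coefficients $c_1, \dots, c_l$ or on the ambient field of definition. This uniformity is the central technical achievement of \cite{ESS2002, AV2009}, and rests on Evertse's absolute version of Schmidt's Diophantine approximation inequality together with careful combinatorial book-keeping of the exceptional subspaces arising at each stage of the induction. For the purposes of the present paper, we therefore treat Lemma \ref{subs} as a black box, and turn to how it is deployed via Lemma \ref{wkpf} in the proof of Proposition \ref{diag}.
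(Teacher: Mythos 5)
Your proposal matches the paper exactly: Lemma \ref{subs} is stated there as a cited result, namely the quantitative refinement due to Amoroso--Viada \cite{AV2009} of the Evertse--Schlickewei--Schmidt theorem \cite{ESS2002}, with no independent proof given, and your treatment of it as a black box with the bound quoted verbatim is precisely how the paper handles it. Your brief sketch of the underlying subspace-theorem strategy is accurate background but plays no role beyond the citation, just as in the paper.
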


Here, we denote the multiplicative subgroup $\Gamma \subseteq \mathbb{C}^{\times}$ to have rank $r$ if there exists a finitely generated subgroup $\Gamma_0$ of $\Gamma$, again of rank $r$, such that the factor group $\Gamma/\Gamma_0$ is a torsion group.

Next, we will require Ruzsa's covering lemma, see \cite[Lemma 2.14]{TV2006}. We will state this below alongside the well-known Pl\"{u}nnecke--Ruzsa inequality \cite[Corollary 6.29]{TV2006}, see \cite{Pe2012} for a nice proof of the latter.

\begin{lemma} \label{rzcov}
    Let $G$ be an abelian group, let $\A, \mathcal{B} \subset G$ be finite, non-empty sets such that $|\A + \mathcal{B}| \leq K|\A|$ for some $K \geq 1$. Then there exists a non-empty set $X \subset G$ such that 
    \[ |X| \leq K \ \ \text{and} \ \ \mathcal{B} \subseteq X + \A - \A. \]
Moreover, for any $m,n \in \mathbb{N}$, one has
    \begin{equation} \label{prineq}
      |m \mathcal{B}| \leq K^m |\A| \ \ \text{and} \ \   |m\mathcal{B} - n\mathcal{B}| \leq K^{m+n}|\A|.
    \end{equation} 
\end{lemma}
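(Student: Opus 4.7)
The plan is to prove the two assertions in turn. First, for Ruzsa's covering lemma, I would use a standard greedy packing argument: take $X \subseteq \mathcal{B}$ to be a maximal subset such that the translates $\{x + \A : x \in X\}$ are pairwise disjoint. Since each such translate sits inside $\A + \mathcal{B}$, disjointness forces $|X| \cdot |\A| \leq |\A + \mathcal{B}| \leq K|\A|$, giving $|X| \leq K$. Maximality then guarantees that for every $b \in \mathcal{B}$ the translate $b + \A$ meets some $x + \A$ with $x \in X$, whence $b \in x + \A - \A \subseteq X + \A - \A$. This finishes the covering lemma.

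Next, for the Pl\"unnecke--Ruzsa inequality, I would follow Petridis's short inductive approach. Choose a non-empty $\A' \subseteq \A$ minimising the ratio $K' := |\A' + \mathcal{B}|/|\A'|$; by hypothesis $K' \leq K$. The technical core is to prove by induction on $|C|$ that for every finite set $C \subseteq G$,
\[ |\A' + \mathcal{B} + C| \leq K' \cdot |\A' + C|. \]
The base case $|C| = 1$ is immediate from the definition of $K'$. For the inductive step $C = C_0 \cup \{v\}$, I would introduce the auxiliary set $\A'' = \{a \in \A' : a + \mathcal{B} + v \subseteq \A' + \mathcal{B} + C_0\}$, observe that the ``new'' part $(\A' + \mathcal{B} + v) \setminus (\A' + \mathcal{B} + C_0)$ is contained in $(\A' \setminus \A'') + \mathcal{B} + v$, and then exploit the minimality of $K'$ on $\A''$ together with the inductive hypothesis on $C_0$ to close the induction. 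Iterating the resulting inequality with $C$ replaced successively by $\mathcal{B}, 2\mathcal{B}, \dots, (m-1)\mathcal{B}$ yields $|\A' + m\mathcal{B}| \leq (K')^m |\A'|$, and in particular $|m\mathcal{B}| \leq K^m|\A|$.

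Finally, the difference-set bound follows from the sumset bound via the Ruzsa triangle inequality $|X| \cdot |V - W| \leq |X + V| \cdot |X + W|$, applied with $X = \A'$, $V = m\mathcal{B}$ and $W = n\mathcal{B}$. Substituting the Plünnecke estimates on $|\A' + m\mathcal{B}|$ and $|\A' + n\mathcal{B}|$ gives
\[ |\A'| \cdot |m\mathcal{B} - n\mathcal{B}| \leq (K')^{m+n} |\A'|^2, \]
so $|m\mathcal{B} - n\mathcal{B}| \leq K^{m+n}|\A|$, as desired.

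The chief obstacle is executing the inductive step in Petridis's argument cleanly: setting up the decomposition via the auxiliary set $\A''$ and invoking the minimality of $K'$ in the correct direction is the delicate part, and careful bookkeeping is needed to ensure the induction closes with constant $K'$ rather than a worse quantity. Once that key inequality is in hand, iteration to obtain sumset bounds and the Ruzsa triangle bridge to differences are essentially routine.
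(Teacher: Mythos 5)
Your proposal is correct: the covering part is the standard maximal-disjoint-translates argument, the sumset bound is Petridis's inductive proof of the Pl\"unnecke--Ruzsa inequality, and the difference bound follows from Ruzsa's triangle inequality $|U|\,|V-W| \leq |U+V|\,|U+W|$, all executed with the right ingredients (in the inductive step, note that the operative bound is $|(\A'+\mathcal{B}+v)\setminus(\A''+\mathcal{B}+v)| \leq |\A'+\mathcal{B}| - |\A''+\mathcal{B}| \leq K'(|\A'|-|\A''|)$, which is exactly where the minimality of $K'$ on $\A''$ enters, as you indicate). The paper does not prove this lemma but simply cites Tao--Vu \cite{TV2006} and Petridis \cite{Pe2012}, and your argument is essentially the proof found in those references.
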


% Given finite, non-empty, pairwise disjoint sets $\A_1, \dots, \A_r \subset \RR$, we would often like to obtain bounds for $E(\A_1 \cup \dots \cup \A_r)$ in terms of $E(\A_1), \dots, E(\A_r)$. For this purpose, we will record \cite[Lemma 3.4]{Mu2021}.

% \begin{lemma} \label{union3}
%     Let $r \in \ZZ$ be positive, let $\A_1, \dots, \A_r \subset \RR$ be finite, non-empty, pairwise disjoint sets and let $\A = A_1 \cup \dots \cup \A_r$. Then 
%     \[ E(\A) \leq r^4 \max_{1 \leq i \leq r} E(\A_i).  \]
% \end{lemma}

We now record an equivalent formulation of a nice result of Rudnev--Shkredov \cite[Lemma $20$]{RS2022} on upper bounds for energies of affine transformations, which will play an important role in the proof of Theorem \ref{rsk24}. 

\begin{lemma} \label{rudshk}
Given $\delta>0$, there exists some constant $c \gg_{\delta} 1$ such that for any finite, non-empty sets $C \subset \RR \setminus \{0\}$ and $D \subset \RR$ with $|D|^{\delta} \leq |C| \leq |D|^{2}$, one has
\[  \sum_{c_1, \dots, c_4 \in C}\sum_{d_1, \dots, d_4 \in D} \mathds{1}_{c_1/c_3 = c_2/c_4} \mathds{1}_{c_4(d_1 - d_2) = c_2 (d_3 - d_4)} \ll_{\delta}  |C|^{5/2 - c}|D|^3.  \]
\end{lemma}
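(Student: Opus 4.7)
The plan is to recognise the double sum as the composition energy of the product family $\mathfrak{G} = \{g_{c,d} : c \in C, d \in D\}$ inside the affine group of $\RR$, and then to invoke \cite[Lemma $20$]{RS2022} directly. A short computation yields $g_{a_1,b_1}^{-1} \circ g_{a_2,b_2}(x) = (a_2/a_1)x + (b_2 - b_1)/a_1$, so that the identity $g_{a_1,b_1}^{-1} \circ g_{a_2,b_2} = g_{a_3,b_3}^{-1} \circ g_{a_4,b_4}$ is equivalent to the pair of equations $a_2 a_3 = a_1 a_4$ and $a_3(b_2 - b_1) = a_1(b_4 - b_3)$. Under the substitution $(a_1, a_2, a_3, a_4) = (c_2, c_1, c_4, c_3)$ and $(b_1, b_2, b_3, b_4) = (d_2, d_1, d_4, d_3)$, these become exactly $c_1 c_4 = c_2 c_3$ (equivalent to $c_1/c_3 = c_2/c_4$) and $c_4(d_1 - d_2) = c_2(d_3 - d_4)$. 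Since all variables range independently over $C$ and $D$, the relabelling merely renames summation indices, so the double sum in the lemma coincides with the affine composition energy
\[ E^{\star}(\mathfrak{G}) := \bigl|\{(g_1, g_2, g_3, g_4) \in \mathfrak{G}^4 : g_1^{-1} \circ g_2 = g_3^{-1} \circ g_4\}\bigr|. \]

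Next, I would appeal to \cite[Lemma $20$]{RS2022} applied to $\mathfrak{G}$. That result, stated for precisely this type of product family inside the affine group, delivers a bound of the shape $E^{\star}(\mathfrak{G}) \ll_{\delta} |C|^{5/2 - c}|D|^3$ in the regime where the multiplicative and additive parameter sets are polynomially comparable. The hypotheses $|D|^{\delta} \leq |C| \leq |D|^2$ serve exactly this purpose: they ensure that both the dilation component $C$ and the translation component $D$ are of size polynomial in one another, which is what one needs to balance Shkredov's higher-energy method against the Helfgott--Murphy growth phenomenon in the affine group, as implemented in \cite{RS2022}. The exponent saving $c \gg_{\delta} 1$ comes out of this balancing, with the quantitative dependence on $\delta$ inherited from the corresponding dependence in the cited lemma.

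The main obstacle, which is notationally fussy but conceptually routine, is the bookkeeping in the relabelling step together with the handling of degenerate cases. In particular, one must verify that the inversions $g_{c_i,d_i}^{-1}$ are well-defined, which is guaranteed by the hypothesis $C \subset \RR \setminus \{0\}$, and that no off-by-one mismatch occurs when passing between the indicator formulation of the sum and the energy formulation of \cite[Lemma $20$]{RS2022}. Once these identifications are in place, the lemma is an equivalent reformulation of the cited result and no further work is required.
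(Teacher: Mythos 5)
Your proposal is correct and takes essentially the same route as the paper: the paper does not prove Lemma \ref{rudshk} from scratch either, but records it as an equivalent formulation of \cite[Lemma 20]{RS2022}, obtained via exactly the identification you carry out between the eight-fold sum and the composition energy of the product family $\{x \mapsto cx+d : c \in C,\, d \in D\}$ in the affine group. Your relabelling computation (with $C \subset \RR \setminus \{0\}$ guaranteeing invertibility) is the same translation the paper performs implicitly in \S 7, so no further comment is needed.
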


Given a group $G$ and some finitely-supported function $\nu : G \to [0,1]$ and a finite, non-empty set $A \subseteq G$, we define
\begin{equation} \label{rhn4}
    E_{\nu}(A) = \sum_{a_1, \dots, a_4 \in A} \nu(a_1) \dots \nu(a_4) \mathds{1}_{a_1 \cdot a_2^{-1} = a_3 \cdot a_4^{-1}}. 
\end{equation}
We conclude this section by proving the following elementary lemma.

% , see also \cite[Lemma 3.4]{Mu2021} for the case when $G = \mathbb{R}$ and $\nu(x) = 1$ for all $x\in G$.

\begin{lemma}  \label{wtun}
    Let $G$ be a group with group operation $\cdot$ and let $\nu: G \to [0,1]$ be a finitely-supported function. For every finite, non-empty sets $\A_1, \dots, \A_4 \subseteq G$, one has
    \begin{equation} \label{fiun}
        \sum_{a_1 \in \A_1, \dots, a_4 \in \A_4} \nu(a_1) \dots \nu(a_4) \mathds{1}_{a_1 \cdot a_2^{-1} = a_3 \cdot a_4^{-1}}  \leq \prod_{i=1}^4 E_{\nu}(\A_i)^{1/4}.  
    \end{equation} 
 Moreover, given $s\geq 1$ and finite, non-empty sets $\A_1, \dots, \A_{s} \subseteq G$ , we have
    \begin{equation} \label{union3}
         E_{\nu}(\A_1 \cup \dots \cup \A_s)^{1/4} \leq \sum_{i=1}^s E_{\nu}(\A_i)^{1/4}. 
    \end{equation}
\end{lemma}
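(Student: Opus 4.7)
The plan is to derive \eqref{fiun} via two successive applications of the Cauchy--Schwarz inequality on appropriate representation functions, and then to bootstrap \eqref{union3} from \eqref{fiun} by a routine union-bound expansion.

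For \eqref{fiun}, the first step is to rewrite the left-hand side using the $\nu$-weighted representation function $R_{\mathcal{B}_1, \mathcal{B}_2}(g) = \sum_{b_1 \in \mathcal{B}_1, b_2 \in \mathcal{B}_2} \nu(b_1)\nu(b_2) \mathds{1}_{b_1 b_2^{-1} = g}$, so that the target sum becomes $\sum_g R_{\A_1, \A_2}(g) R_{\A_3, \A_4}(g)$. A first Cauchy--Schwarz in $g$ reduces matters to bounding $\sum_g R_{\A_i, \A_j}(g)^2$ for the pairs $(i,j) \in \{(1,2), (3,4)\}$. The second step is to re-express this quantity as a bilinear sum against a second representation function $Q_{\mathcal{B}}(h) = \sum_{a_1, a_2 \in \mathcal{B}} \nu(a_1) \nu(a_2) \mathds{1}_{a_2^{-1} a_1 = h}$, exploiting the fact that the equation $a_1 b_1^{-1} = a_2 b_2^{-1}$ decouples, after left-multiplication by $a_2^{-1}$ and right-multiplication by $b_1$, into $a_2^{-1} a_1 = b_2^{-1} b_1$ with a common auxiliary variable $h$. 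A second Cauchy--Schwarz in $h$ then reduces matters further to $\sum_h Q_{\mathcal{B}}(h)^2$. The one non-mechanical observation is that $\sum_h Q_{\mathcal{B}}(h)^2 = E_\nu(\mathcal{B})$: starting from $a_2^{-1} a_1 = a_4^{-1} a_3$ and multiplying on the left by $a_4$ and on the right by $a_1^{-1}$ produces $a_4 a_2^{-1} = a_3 a_1^{-1}$, which is precisely the equation defining $E_\nu$ in \eqref{rhn4} after relabelling the four summation variables. Chaining the two Cauchy--Schwarz bounds together then yields \eqref{fiun}.

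For \eqref{union3}, I would expand $E_\nu(\A_1 \cup \dots \cup \A_s)$ by assigning, to each of its four summation variables, an index $i_j \in \{1, \dots, s\}$ with the corresponding entry lying in $\A_{i_j}$; since elements lying in several of the $\A_i$ may be overcounted, this produces an upper bound on $E_\nu(\A_1 \cup \dots \cup \A_s)$ by the sum over $(i_1, \dots, i_4) \in [s]^4$ of the left-hand side of \eqref{fiun} applied to $(\A_{i_1}, \dots, \A_{i_4})$. Invoking \eqref{fiun} in each term and recognising the resulting quadruple sum as $\bigl(\sum_{i=1}^s E_\nu(\A_i)^{1/4}\bigr)^4$ concludes after extracting fourth roots.

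I do not foresee any genuine obstacle; the entire argument is an assembly of two Cauchy--Schwarz applications, with the only step requiring thought being the elementary group-theoretic identity $a_2^{-1} a_1 = a_4^{-1} a_3 \iff a_4 a_2^{-1} = a_3 a_1^{-1}$, which is what makes the ``mixed" second moment of $Q_{\mathcal{B}}$ coincide with the canonical energy $E_\nu(\mathcal{B})$ even when $G$ fails to be abelian.
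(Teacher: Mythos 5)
Your argument is correct and is essentially the paper's own proof: the same double Cauchy--Schwarz on the representation functions $R$ and $Q$ (the paper's $r_{i,j}$ and the implicit function behind its second Cauchy--Schwarz), the same non-abelian rearrangement $a_2^{-1}a_1=a_4^{-1}a_3 \iff a_4a_2^{-1}=a_3a_1^{-1}$ identifying the second moment of $Q$ with $E_\nu$, and the same union-expansion deduction of \eqref{union3} from \eqref{fiun}. No gaps.
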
 

\begin{proof} The proof of Lemma \ref{wtun} is somewhat standard and elementary, see for instance, \cite[Lemma 3.4]{Mu2021} for the case when $G = \mathbb{R}$ and $\nu(x) = 1$ for all $x\in G$. We begin by deducing \eqref{union3} from \eqref{fiun} in a straightforward manner. Indeed, we have
\begin{align*}
    E_{\nu}(\A_1 \cup \dots \cup \A_s) 
    & \leq \sum_{1 \leq i_1, \dots, i_4 \leq s} \ \sum_{a_{i_1} \in \A_{i_1}}\nu(a_{i_1}) \dots \nu(a_{i_4}) \mathds{1}_{a_1 \cdot a_2^{-1} = a_3 \cdot a_4^{-1}}   \\
    & \leq \sum_{1 \leq i_1, \dots, i_4 \leq s} E_{\nu}(\A_{i_1})^{1/4}\dots E_{\nu}(\A_{i_s})^{1/4 } = (\sum_{1 \leq i \leq s} E_{\nu}(\A_i)^{1/4} )^4.
\end{align*}
Thus it suffices to prove \eqref{fiun}. In this endeavour, define $r_{i,j}(x) = \sum_{a \in \A_i, b \in \A_j} \nu(a) \nu(b) \mathds{1}_{x = a \cdot b^{-1}}$ for every $1 \leq i,j \leq 4$. We now apply the Cauchy--Schwarz inequality to deduce that
\[         \sum_{a_1 \in \A_1, \dots, a_4 \in \A_4} \nu(a_1) \dots \nu(a_4) \mathds{1}_{a_1 \cdot a_2^{-1} = a_3 \cdot a_4^{-1}}   = \sum_{x \in G} r_{1,2}(x) r_{3,4}(x) \leq (\sum_{x\in G} r_{1,2}(x)^2 )^{1/2} (\sum_{x\in G} r_{3,4}(x)^2 )^{1/2}.\]
Therefore it suffices to show that $\sum_{x \in G} r_{1,2}(x)^2 \leq E_{\nu}(\A_1)^{1/2}E_{\nu}(\A_2)^{1/2}$. Applying double--counting and Cauchy--Schwarz inequality as before, we get that
\begin{align*}
    \sum_{x \in G} r_{1,2}(x)^2 
    & = \sum_{\substack{a_1, a_3 \in \A_1,  \\ a_2, a_4 \in \A_2}} \nu(a_1) \dots \nu(a_4) \mathds{1}_{a_1 \cdot a_2^{-1} = a_3 \cdot a_4^{-1}}  = \sum_{\substack{a_1, a_3 \in \A_1,  \\ a_2, a_4 \in \A_2}} \nu(a_1) \dots \nu(a_4) \mathds{1}_{a_3^{-1}\cdot a_1 = a_4^{-1} \cdot a_2} \\
    &  \leq \prod_{i \in \{1,2\}} \Big( \sum_{a_1, \dots, a_4 \in \A_i} \nu(a_1) \dots \nu(a_4) \mathds{1}_{a_1^{-1}\cdot a_2 = a_3^{-1}\cdot a_4} \Big)^{1/2} \\
    & = \prod_{i \in \{1,2\}} \Big( \sum_{a_1, \dots, a_4 \in \A_i} \nu(a_1) \dots \nu(a_4) \mathds{1}_{ a_2 \cdot a_4^{-1} = a_1 \cdot a_3^{-1}} \Big)^{1/2}  = E_{\nu}(\A_1)^{1/2} E_{\nu}(\A_2)^{1/2},
\end{align*}
and so, we finish the proof of \eqref{fiun}.
\end{proof}

% Finally, in our proof of Theorem \ref{ledap}, we will be iteratively applying the Balog--Szemer\'{e}di--Gowers theorem to decompose $\mathcal{A}$ into a union of various multiplicatively structured sets along with a remaining part. It will be important for us to be able to control the number of such multiplicatively structured sets in the aforementioned decomposition, a task for which we will employ the following elementary result \cite[Lemma 4.2]{Mu2021}.

% \begin{lemma} \label{com2}
% Let $0 < c < 1$ and $C>0$ be constants. Let $A_0 = A$, and for each $i \geq 1$, define $A_i = A_{i-1} \setminus U_i$ where $|U_i| \geq C|A_{i-1}|^{1- c}$. Then, for some $r \leq 2(\log |A| + 2) +  C^{-1} \frac{ |A|^{c}}{2^c  - 1} $, we must have $|A_r| \leq 1$. 
% \end{lemma}

% \begin{proof}
% \end{proof}

%---------------------------------------------------------------------------------------------------------------------------
%---------------------------------------------------------------------------------------------------------------------------
%---------------------------------------------------------------------------------------------------------------------------
%---------------------------------------------------------------------------------------------------------------------------
%---------------------------------------------------------------------------------------------------------------------------
%---------------------------------------------------------------------------------------------------------------------------

\section{Proof of Theorem \ref{t1}}

Writing $I$ to be the $2 \times 2$ identity matrix and $\mathcal{I} = \{ \lambda I : \lambda \in \RR\}$, we note that the contribution to $T(\mu)$ when either $X \in \mathcal{I}$ or $Y \in \mathcal{I}$ is at most 
\begin{equation}  \label{trivcontribution}
2\mu(\mathcal{I}) \sum_{Z \in {\rm supp}(\mu)} \mu(Z) = 2\mu(\mathcal{I}) \leq 2 \delta(\mu)  
\end{equation}
whence it suffices to estimate the contribution from the cases when $X, Y \in {\rm supp}(\mu) \setminus \mathcal{I}$. 
Writing $\mathcal{T} = {\rm supp}(\mu) \setminus \mathcal{I}$, let $H_1$ be the set of all $(Y_1, Y_2, Y_3) \in \mathcal{T}^3$ such that $Y_1, Y_2$ and $Y_3$ are linearly independent. Moreover, let $H_2 = \mathcal{T}^3 \setminus H_1$.

\begin{lemma} \label{s3dep}
    Let $(Y_1, Y_2, Y_3) \in H_1$ and let $X \in \M_2(\RR)$ such that $XY_i = Y_i X$ for every $1 \leq i \leq 3$. Then $X \in \mathcal{I}$. 
\end{lemma}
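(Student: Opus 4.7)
The plan is to reduce the statement to a linear-algebraic fact about centralizers of $2 \times 2$ matrices. The key ingredient is the standard description: for any $Y \in \M_2(\RR) \setminus \mathcal{I}$, the centralizer $C(Y) := \{ Z \in \M_2(\RR) : ZY = YZ \}$ is exactly the $2$-dimensional subspace $\mathrm{span}(I, Y)$. One inclusion is immediate, and the reverse inclusion can be obtained either from Cayley--Hamilton (the minimal polynomial of a non-scalar $2\times 2$ matrix has degree $2$, so $\RR[Y] = \mathrm{span}(I,Y)$ and for $2\times 2$ non-scalar matrices the centralizer coincides with this polynomial algebra), or by directly solving the four linear equations defining commutation in the entries of $Z$.

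With this in hand, the hypothesis of the lemma gives
\[
X \in C(Y_1) \cap C(Y_2) \cap C(Y_3) = \mathrm{span}(I, Y_1) \cap \mathrm{span}(I, Y_2) \cap \mathrm{span}(I, Y_3),
\]
so for each $i \in \{1,2,3\}$ I can write $X = a_i I + b_i Y_i$ with $a_i, b_i \in \RR$. The goal is to show $b_1 = b_2 = b_3 = 0$, since this forces $X = a_1 I \in \mathcal{I}$.

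Assume for contradiction that some $b_i \neq 0$; without loss of generality $b_1 \neq 0$. Comparing $a_1 I + b_1 Y_1 = a_2 I + b_2 Y_2$, if $b_2 = 0$ then $Y_1 = -b_1^{-1}(a_1 - a_2) I \in \mathcal{I}$, contradicting $Y_1 \in \mathcal{T}$; hence $b_2 \neq 0$, and by the same reasoning $b_3 \neq 0$. Rearranging the two identities then gives
\[
Y_1 \in \mathrm{span}(I, Y_2) \cap \mathrm{span}(I, Y_3).
\]

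The main (and only substantive) step is now a case split on whether $I \in \mathrm{span}(Y_2, Y_3)$. If $I, Y_2, Y_3$ are linearly independent, then $\mathrm{span}(I, Y_2) + \mathrm{span}(I, Y_3)$ is $3$-dimensional, and a dimension count forces $\mathrm{span}(I, Y_2) \cap \mathrm{span}(I, Y_3) = \mathrm{span}(I)$, so $Y_1 \in \mathcal{I}$, again contradicting $Y_1 \in \mathcal{T}$. If instead $I, Y_2, Y_3$ are linearly dependent, then since $Y_2, Y_3$ are themselves independent (they sit inside the independent triple) we must have $I \in \mathrm{span}(Y_2, Y_3)$; consequently $\mathrm{span}(I, Y_2) = \mathrm{span}(Y_2, Y_3)$, and we conclude $Y_1 \in \mathrm{span}(Y_2, Y_3)$, contradicting the linear independence of $Y_1, Y_2, Y_3$. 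Both branches yield a contradiction, completing the argument. I do not foresee any serious obstacle: the proof is purely linear-algebraic, and the only care needed is in handling the degenerate case where $I$ lies in the span of the $Y_i$.
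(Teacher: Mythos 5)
Your argument is correct, but it takes a genuinely different route from the paper's. You invoke the centralizer description $C(Y)=\mathrm{span}(I,Y)$ for every non-scalar $Y\in\M_2(\RR)$ (which is legitimate here, since $H_1\subseteq\mathcal{T}^3$ and $\mathcal{T}$ is disjoint from $\mathcal{I}$), write $X=a_iI+b_iY_i$ for each $i$, and then run a dimension count and case split on whether $I\in\mathrm{span}(Y_2,Y_3)$ to force $b_1=b_2=b_3=0$; all steps, including the WLOG and the two contradictions, check out. The paper never uses the centralizer characterization: it observes that $V=\mathrm{span}(\{Y_1,Y_2,Y_3\})$ is $3$-dimensional, hence meets each of three coordinate $2$-planes of $\M_2(\RR)$ (first column zero, first row zero, off-diagonal zero) in a nonzero matrix, and commutation of $X$ with one such matrix from each intersection forces $x_3=0$, then $x_2=0$, then $x_1=x_4$, so $X\in\mathcal{I}$. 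The paper's proof is self-contained and purely coordinate-based, at the cost of three explicit computations; yours is shorter and more conceptual once the standard centralizer fact is granted (and you do indicate two standard ways to prove it). Note also that your argument can be compressed further: since commutation is symmetric, $Y_1,Y_2,Y_3\in C(X)$, and if $X$ were non-scalar then $C(X)=\mathrm{span}(I,X)$ would be $2$-dimensional and could not contain three linearly independent matrices, giving $X\in\mathcal{I}$ immediately without the case analysis.
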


\begin{proof}
    
 The hypothesis implies that $XY = YX$ holds for every $Y$ satisfying  $Y = \lambda_1 Y_1 + \lambda_2 Y_2 + \lambda_3 Y_3$ for some $\lambda_1, \lambda_2, \lambda_3 \in \RR$. Let 
\[ V = {\rm span}(\{Y_1, Y_2, Y_3\} ) \ \ \text{and} \ \ U = \{ (v_{i,j})_{1 \leq i,j \leq 2} : v_{1,1} = v_{2,1} = 0\} .\]
Since $V$ is a $3$-dimensional subspace of $\M_2(\RR)$ and $U$ is a $2$-dimensional subspace of $\M_2(\RR)$, the set $V \cap U$ must contain at least a $1$-dimensional subspace. Let $Z \in (V \cap U) \setminus \{0\}$ and let 
\[ X =  \begin{pmatrix}
    x_1 & x_2\\
    x_3 & x_4
\end{pmatrix} \ \ \text{and let} \ \ Z  = \begin{pmatrix}
    0 & a\\
    0 & b
\end{pmatrix} \] 
for some $a, b \in \RR$ with $(a,b) \neq (0,0)$. Since $Z \in V$, we have that $XZ = ZX$, which in turn gives us
\[ ax_3 = 0 \ \ \text{and} \ \ (x_1 - x_4) a = -b x_2 \ \ \text{and} \ \ 0 =  - b x_3 .  \]
If $x_3 \neq 0$, we would get that $a = b =0$, which contradicts the fact that $(a,b) \neq (0,0)$. Thus, we must have $x_3 = 0$. 

Similarly, setting $U' = \{ (v_{i,j})_{1 \leq i,j \leq 2} : v_{1,1} = v_{1,2} = 0\}$, we see that $V \cap U'$ must contain at least a $1$-dimensional subspace. Let $Z' \in (V \cap U) \setminus \{0\}$ and let 
\[ Z' =  \begin{pmatrix}
    0 & 0\\
    e & f
\end{pmatrix} \]
for some $e,f \in \RR$ with $(e,f) \neq 0$. As before, we have that $XZ' = Z' X$, which in turn gives us
\[  0 = -f x_2 \ \ \text{and} \ \ x_2 e = 0 \ \ \text{and} \ \ -f x_3 = (x_1 - x_4) e. \]
Noting the fact that $(e, f) \neq (0,0)$, we get that $x_2 = 0$.

Finally, writing $U'' = \{ (v_{i,j})_{1 \leq i,j \leq 2} : v_{2,1} = v_{1,2} = 0\}$, we see that $V \cap U'$ must contain at least a $1$-dimensional subspace. Thus, let $Z'' \in U'' \cap V$ such that
\[ Z'' =  \begin{pmatrix}
    0 & h\\
    g & 0
\end{pmatrix}\]
for some $g,h \in \RR$ with $(g,h) \neq 0$. Since $X Z'' = Z'' X$, we get that
\[ x_1 h = x_4 h \ \ \text{and} \ \ g x_4 = gx_1.\]
Since $(g,h) \neq (0,0)$, we must have $x_1 = x_4$, which combines with the fact that $x_2,x_3 = 0$ to imply that $X \in \mathcal{I}$.
\end{proof}

We now return to the proof of Theorem \ref{t1}. We start by applying H\"{o}lder's inequality to get that
\begin{align}  \label{hldr}
\Big(\sum_{X, Y \in \mathcal{T}} \mu(X) \mu(Y) \mathds{1}_{XY = YX} \Big)^3
&  \leq \Big(\sum_{X \in \mathcal{T}} \mu(X) \Big)^2 \Big( \sum_{X \in \mathcal{T}} \mu(X) \Big( \sum_{Y \in \mathcal{T}} \mu(Y) \mathds{1}_{XY = YX} \Big)^3 \Big) \nonumber \\
\leq \sum_{X \in \mathcal{T}} \mu(X) \sum_{Y_1, Y_2, Y_3 \in \mathcal{T}} & \mu(Y_1) \mu(Y_2) \mu(Y_3) \ \mathds{1}_{XY_1 = Y_1X} \mathds{1}_{XY_2 = Y_2X} \mathds{1}_{XY_3 = Y_3X}.
\end{align}
We refer to the right hand side in \eqref{hldr} as $\Sigma$. Since $\mathcal{T} \cap \mathcal{I} = \emptyset$, Lemma \ref{s3dep} implies that the contribution of the cases when $(Y_1, Y_2, Y_3) \in H_1$ is zero. Thus, we may assume that $(Y_1, Y_2, Y_3) \in H_2$, that is, there exists some permutation $\sigma$ of $\{1,2,3\}$ such that
\[ Y_{\sigma(1)} \in {\rm span}(\{ Y_{\sigma(2)}, Y_{\sigma(3)}\}). \]
In this case, we see that $Y_{\sigma(1)}$ must lie in some subset $\mathcal{T}' \subseteq \mathcal{T}$ such that $\dim(\mathcal{T}') \leq 2$. Henceforth, we have
\begin{align} \label{cas1}
  \Sigma & \leq  \sum_{(Y_1, Y_2, Y_3)  \in H_2}  \sum_{X \in \mathcal{T}}  \mu(X) \mu(Y_1) \mu(Y_2) \mu(Y_3) \ \mathds{1}_{XY_1 = Y_1X} \mathds{1}_{XY_2 = Y_2X} \mathds{1}_{XY_3 = Y_3X} \nonumber \\
     & \ \ \ \  \leq 6\sum_{X \in \mathcal{T}}  \mu(X) \sum_{Y_{\sigma(2)}, Y_{\sigma(3)} \in \mathcal{T}}  \mu(Y_{\sigma(2)}) \mu(Y_{\sigma(3)})  \mathds{1}_{XY_{\sigma(2)} = Y_{\sigma(2)}X} \mathds{1}_{XY_{\sigma(3)} = Y_{\sigma(3)}X}  \max_{\substack{ \mathcal{T'} \subseteq {\rm supp}(\mu) , \\  \dim(\mathcal{T}') \leq 2
  }} \mu(\mathcal{T}') \nonumber \\
  & \ \ \ \ \leq 6 \ \delta(\mu) \sum_{X \in \mathcal{T}} \mu(X) \sum_{Y,Y' \in \mathcal{T}} \mu(Y) \mu(Y') \mathds{1}_{XY=YX} \mathds{1}_{XY' = Y'X}  \nonumber \\
  & \ \ \ \ = 6 \ \delta(\mu) \sum_{X \in \mathcal{T}} \mu(X) \bigg( \sum_{Y \in \mathcal{T}} \mu(Y) \mathds{1}_{XY = YX}\bigg)^2 \nonumber \\
  &  \ \ \ \ \leq 6 \ \delta(\mu) \bigg( \sum_{X \in \mathcal{T}} \mu(X) \bigg( \sum_{Y \in \mathcal{T}} \mu(Y) \mathds{1}_{XY = YX}\bigg)^3 \bigg)^{2/3}  \bigg(\sum_{X \in \mathcal{T} }  \mu(X) \bigg)^{1/3} \leq 6 \ \delta(\mu) \Sigma^{2/3},
\end{align}
where the first inequality in \eqref{cas1} follows from H\"{o}lder's inequality. Simplifying, we get that
\[ \Sigma \leq 6^3 \delta(\mu)^3.\]
Substituting this into \eqref{hldr}, we discern that
\[ \sum_{X, Y \in \mathcal{T}} \mu(X) \mu(Y) \mathds{1}_{XY = YX} \leq  \Sigma^{1/3} \leq 6 \ \delta(\mu).  \]
Putting this together with \eqref{trivcontribution} finishes the proof of Theorem \ref{t1}.

%    and so, $X$ lies in a $2$-dimensional subspace of $\M_2(\RR)$. Putting this together with the preceding discussion, we get that
% \begin{align} \label{cas2}
%      \sum_{(Y_1, Y_2, Y_3) \in H_1} \sum_{X \in \mathcal{T}} & \mu(X) \mu(Y_1) \mu(Y_2) \mu(Y_3) \ \mathds{1}_{XY_1 = Y_1X} \mathds{1}_{XY_2 = Y_2X} \mathds{1}_{XY_3 = Y_3X} \nonumber \\
%      & \leq \sum_{Y_1, Y_2, Y_3 \in \mathcal{T}}  \mu(Y_1) \mu(Y_2) \mu(Y_3)  \max_{\substack{ \mathcal{T'} \subseteq {\rm supp}(\mu) , \\  \dim(\mathcal{T}') \leq 2
%   }} \mu(\mathcal{T}') \leq  \delta(\mu).
% \end{align}
% Substituting \eqref{cas1} and \eqref{cas2} in \eqref{hldr}, we obtain the desired result
% \[ \sum_{X, Y \in \mathcal{T}} \mu(X) \mu(Y) \mathds{1}_{XY = YX}  \ll \delta(\mu)^{1/3} , \]
% thus concluding the proof of Theorem \ref{t1}.

%---------------------------------------------------------------------------------------------------------------------------
%---------------------------------------------------------------------------------------------------------------------------
%---------------------------------------------------------------------------------------------------------------------------
%---------------------------------------------------------------------------------------------------------------------------
%---------------------------------------------------------------------------------------------------------------------------
%---------------------------------------------------------------------------------------------------------------------------

\section{Proof of Theorem \ref{ptpl}}

Given some finitely-supported probability measure $\nu$ on $\RR$, let
\[ X = \begin{pmatrix}
    x_1 & x_2\\
    x_3 & x_4
\end{pmatrix}  \ \ \text{and} \ \ Y = \begin{pmatrix}
    y_1 & y_2\\
    y_3 & y_4
\end{pmatrix}  \]
for some $x_1, \dots, y_4 \in \supp(\nu)$. Then $XY = YX$ holds if and only if
\begin{equation} \label{xy}
    x_2 y_3 = x_3 y_2 \ \ \text{and} \ \ x_2(y_4 - y_1) = y_2(x_4 - x_1) \ \ \text{and}  \ \ x_3(y_4 - y_1) = y_3(x_4 - x_1). 
\end{equation} 
Considering the solutions to \eqref{xy} satisfying $x_2 = y_2$ and $x_3 = y_3$, we obtain the lower bound
\begin{equation} \label{gkid}
    T(\mu_{\nu}) \geq \n{\nu}_2^4 \sum_{x_1, x_4, y_1, y_4 \in \supp(\nu)} \nu(x_1) \nu(x_4)\nu(y_1)\nu(y_4) \mathds{1}_{y_4 - y_1 = x_4 - x_1} = \n{\nu}_2^4 E_{\nu}(\supp(\nu)).
\end{equation} 

Let 
\[ H= \{ (x_1, \dots, y_4) \in \supp(\nu)^8 \ : \ x_1, \dots, y_4 \ \ \text{satisfy} \ \ \eqref{xy}\}, \] 
and 
\[ H' = \{(x_1, \dots, y_4) \in \supp(\nu)^8 \ : x_2x_3 y_2 y_3  (x_4 - x_1)(y_4 - y_1) = 0 \}. \]
Moreover, let $H_1 = H \cap H'$ and let $H_2 = H \setminus H'$. With this notation in hand, we get that
\begin{align} \label{sop}
   T(\mu_{\nu}) & = \sum_{X, Y \in {\rm supp}(\mu_{\nu})} \mu_{\nu}(X) \mu_{\nu}(Y) \mathds{1}_{XY = YX} \nonumber \\
   & = \sum_{(x_1, \dots, y_4) \in H_1}  \nu(x_1) \dots \nu(y_4) + \sum_{(x_1, \dots, y_4) \in H_2}\nu(x_1) \dots \nu(y_4). 
\end{align} 

We will now estimate the first sum on the right hand side of \eqref{sop}. 

\begin{lemma} \label{tn4}
    We have that
\[     \sum_{(x_1, \dots, y_4) \in H_1} \nu(x_1)\dots\nu(y_4)  \ll \nu(0)^3 + \n{\nu}_2^6. \]
\end{lemma}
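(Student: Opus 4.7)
The plan is to split the sum by breaking the condition $x_2 x_3 y_2 y_3 (x_4 - x_1)(y_4 - y_1) = 0$ into a union of six sub-conditions via the elementary bound $\mathds{1}_{H'} \leq \mathds{1}_{x_2 = 0} + \mathds{1}_{x_3 = 0} + \mathds{1}_{y_2 = 0} + \mathds{1}_{y_3 = 0} + \mathds{1}_{x_4 = x_1} + \mathds{1}_{y_4 = y_1}$. The commutation system \eqref{xy} is invariant under both the swap $(X,Y) \mapsto (Y,X)$ and simultaneous transposition $(X,Y) \mapsto (X^T, Y^T)$, and these two symmetries together permute the six indicators into just two equivalence classes. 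It therefore suffices to bound, by $O(\nu(0)^3 + \n{\nu}_2^6)$, the sub-sums corresponding to the two representative cases (I) $x_2 = 0$ and (II) $x_4 = x_1$.

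For the first representative case $x_2 = 0$, the equation $x_2 y_3 = x_3 y_2$ forces $x_3 = 0$ or $y_2 = 0$. When $x_3 = y_2 = 0$, the surviving relation is $y_3(x_4 - x_1) = 0$, and the contribution sums trivially to $O(\nu(0)^3)$. When $x_3 \neq 0$ and $y_2 = 0$, only $x_3(y_4 - y_1) = y_3(x_4 - x_1)$ survives. Writing $\sigma(w) = \sum_{a - b = w} \nu(a) \nu(b)$ and setting $u = y_4 - y_1$, $v = x_4 - x_1$, the contribution takes the form
\[ \nu(0)^2 \sum_{x_3 \neq 0,\, y_3, u, v} \nu(x_3) \nu(y_3) \sigma(u) \sigma(v) \mathds{1}_{x_3 u = y_3 v}. \]
The degenerate strata where one of $y_3, u, v$ vanishes contribute at most $\nu(0)^3 \n{\nu}_2^2 + \nu(0)^2 \n{\nu}_2^4$ after trivial summation. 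The main stratum, where all three are nonzero, I would parameterise by the common ratio $\lambda = x_3/y_3 = v/u$ and estimate by Cauchy--Schwarz as $\nu(0)^2 \sum_\lambda r_\nu(\lambda) r_\sigma(\lambda) \leq \nu(0)^2 \bigl(\sum_\lambda r_\nu(\lambda)^2\bigr)^{1/2} \bigl(\sum_\lambda r_\sigma(\lambda)^2\bigr)^{1/2}$, where $r_\nu(\lambda) = \sum_{b \neq 0} \nu(\lambda b) \nu(b)$ and $r_\sigma(\lambda) = \sum_{u \neq 0} \sigma(\lambda u) \sigma(u)$. The Cauchy--Schwarz bounds $r_\nu(\lambda), r_\sigma(\lambda) \leq \n{\nu}_2^2$ (noting $\sigma(w) \leq \sigma(0) = \n{\nu}_2^2$), together with $\sum_\lambda r_\nu(\lambda), \sum_\lambda r_\sigma(\lambda) \leq 1$, yield both square-sums at most $\n{\nu}_2^2$ and hence a main-term bound of $\nu(0)^2 \n{\nu}_2^2$.

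For the second representative case $x_4 = x_1$, the last two equations in \eqref{xy} become $x_2(y_4 - y_1) = x_3(y_4 - y_1) = 0$. If $y_4 = y_1$, only $x_2 y_3 = x_3 y_2$ remains; summing the diagonal variables yields $\n{\nu}_2^4$, and the same ratio/Cauchy--Schwarz analysis as above bounds the multiplicative piece by $\nu(0)^2 + \n{\nu}_2^2$, for a total of $\n{\nu}_2^4(\nu(0)^2 + \n{\nu}_2^2)$. If $y_4 \neq y_1$, we must have $x_2 = x_3 = 0$, yielding a trivial $\nu(0)^2 \n{\nu}_2^2$.

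Assembling the pieces, every bound encountered has the form $\nu(0)^a \n{\nu}_2^b$ with $2a + b \geq 6$, and such a quantity is always at most $\nu(0)^3 + \n{\nu}_2^6$ (verify separately for $\nu(0) \geq \n{\nu}_2^2$ and $\nu(0) < \n{\nu}_2^2$, using $\nu(0), \n{\nu}_2 \leq 1$). I do not anticipate a serious obstacle: the argument is essentially a disciplined case analysis of degenerate commuting pairs, and the only mildly non-trivial estimate is the bound $\sum_w \sigma(w)^2 \leq \n{\nu}_2^2$ required to control $\sum_\lambda r_\sigma(\lambda)^2$, which follows immediately from $\sigma(w) \leq \sigma(0) = \n{\nu}_2^2$ and $\sum_w \sigma(w) = 1$.
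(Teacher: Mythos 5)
Your proof is in essence the paper's own argument: an elementary enumeration of which factor of $x_2x_3y_2y_3(x_4-x_1)(y_4-y_1)$ vanishes, with Cauchy--Schwarz (in your write-up via the ratio parameterisation $\lambda$ and the functions $r_\nu, r_\sigma$) controlling the non-degenerate strata; the reduction to the two representatives $x_2=0$ and $x_4=x_1$ via the swap and simultaneous-transpose symmetries is legitimate, since both preserve the system \eqref{xy}, the weight $\nu(x_1)\cdots\nu(y_4)$, and the set $\supp(\nu)^8$, and they do split the six indicators into exactly those two orbits. All the individual estimates you invoke check out: $r_\nu(\lambda), r_\sigma(\lambda)\le \n{\nu}_2^2$, $\sum_\lambda r_\nu(\lambda)\le 1$, $\sum_\lambda r_\sigma(\lambda)\le 1$, $\sigma(w)\le\sigma(0)=\n{\nu}_2^2$, and the closing observation that $\nu(0)^a\n{\nu}_2^b\le \nu(0)^3+\n{\nu}_2^6$ whenever $2a+b\ge 6$ (using $\nu(0),\n{\nu}_2\le 1$) is correct and matches how the paper absorbs its intermediate term $\nu(0)^2\n{\nu}_2^2$.

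The one genuine flaw is that your case split inside the representative case $x_2=0$ is not exhaustive. From $x_2y_3=x_3y_2$ and $x_2=0$ you correctly deduce $x_3=0$ or $y_2=0$, but you then only treat the sub-cases $x_3=y_2=0$ and $x_3\neq 0,\ y_2=0$, omitting $x_3=0,\ y_2\neq 0$. In that omitted sub-case the second equation of \eqref{xy} reads $0=y_2(x_4-x_1)$ and forces $x_4=x_1$, while the third equation becomes vacuous, so its contribution is at most $\nu(0)^2\sum_{x}\nu(x)^2=\nu(0)^2\n{\nu}_2^2$, which satisfies your $2a+b\ge 6$ criterion; thus the gap is filled by one line, but as written the enumeration is incomplete. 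With that sub-case added, your argument gives the same bound $\ll \nu(0)^3+\nu(0)^2\n{\nu}_2^2+\n{\nu}_2^6\ll\nu(0)^3+\n{\nu}_2^6$ as the paper, the only cosmetic difference being that the paper bounds the analogous non-degenerate count $T_1(\nu)$ by fixing four variables and summing $\nu(x_1)\nu(x_1+c)\le\n{\nu}_2^2$ directly, rather than through the second-moment bound on $r_\sigma$.
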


\begin{proof}
Suppose $(x_1, \dots, y_4) \in H_1$ with $x_2 = 0$. Then we must have either $x_3 = 0$ or $y_2 = 0$. If $(x_2,x_3) = (0,0)$, then we must either have $y_3 = 0$ or $x_4 = x_1$. Similarly, if $(x_2,y_2) = (0,0)$, then we have to count solutions to $x_3 (y_4 - y_1) = y_3 (x_4 - x_1)$. Here, either $x_3(y_4 - y_1) \neq 0$ or $(x_3, y_3) = (0,0)$ or $(x_3, y_1 - y_4) = (0,0)$ or $(y_4 - y_1, y_3) = 0$ or $(y_4 - y_1, x_4 - x_1) = (0,0)$. Summarising this, we get that
\begin{align*}
\sum_{(x_1, \dots, y_4) \in H_1, x_2 = 0} \nu(x_1) \dots \nu(y_4)  \ll & \nu(0)^3 +  \nu(0)^2 \sum_{x \in \supp(\nu)} \nu(x)^2  +  \nu(0)^4 \\
+ \nu(0)^3 \sum_{x \in \supp(\nu)} & \nu(x)^2 
 + \nu(0)^2 (\sum_{x \in \supp(\nu)} \nu(x)^2)^2 + \nu(0)^2 T_1(\nu) 
\end{align*}
where 
\[ T_1(\nu) = \sum_{\substack{x_1,  x_4, y_1, y_4 \in \supp(\nu),\\ x_3, y_3 \in \supp(\nu) \setminus \{0\}}} \nu(x_1) \dots \nu(y_4) \mathds{1}_{x_3 (y_4 - y_1) = y_3 (x_4 - x_1)} . \]
Since $\nu$ is a probability measure, we have $\n{\nu}_2^2 \leq 1$ and $\n{\nu}_{\infty} \leq 1$. This allows us to simplify the preceding inequality to deduce that
\begin{equation} \label{starless1}
     \sum_{(x_1, \dots, y_4) \in H_1, x_2 = 0} \nu(x_1) \dots \nu(y_4)  \ll \nu(0)^3 + \nu(0)^2 (\sum_{x \in \supp(\nu)}\nu(x)^2 + T_1(\nu)). 
\end{equation} 
% A similar analysis can be done for the cases when $x_3 = 0$ or $y_2 = 0$ or $y_3 =0$. Thus, we get that
% \begin{align*}
% \sum_{(x_1, \dots, y_4) \in H_1} \nu(x_1) \dots \nu(y_4)  \ll \nu(0)^3 +  \nu(0)^2 \sum_{x} \nu(x)^2  +  \nu(0)^2 T_1(\nu),
% \end{align*}
% where 
% \[ T_1(\nu) = \sum_{x_1, x_3, x_4, y_1, y_3, y_4 \in \supp(\nu)} \nu(x_1) \dots \nu(y_4) \mathds{1}_{x_3 (y_4 - y_1) = y_3 (x_4 - x_1)} . \]

We can estimate $T_1(\nu)$ via a straightforward application of the Cauchy-Schwarz inequality
\begin{align*}
    T_1(\nu)
    & \leq \sum_{\substack{y_1, y_4 \in \supp(\nu),\\ x_3, y_3 \in \supp(\nu) \setminus \{0\}}} \nu(x_3) \nu(y_1)\nu(y_3)\nu(y_4) \sum_{x_1 \in \supp(\nu)} \nu(x_1) \nu\Big(x_1 + \frac{x_3}{y_3}(y_4 - y_1) \Big) \\
& \leq  \sum_{\substack{y_1, y_4 \in \supp(\nu),\\ x_3, y_3 \in \supp(\nu) \setminus \{0\}}} \nu(x_3) \nu(y_1)\nu(y_3)\nu(y_4)  \sum_{x_1 \in \supp(\nu)} \nu(x_1)^2   \leq \n{\nu}_2^2.
\end{align*}
 Substituting this into \eqref{starless1} and performing a similar analysis when $x_3 = 0$ or $y_2 = 0$ or $y_3 = 0$, we find that
\begin{align} \label{starless2}
    \sum_{\substack{(x_1, \dots, y_4) \in H_1, \\ x_2x_3y_2y_3 = 0}} \nu(x_1) \dots \nu(y_4) \ll \nu(0)^3 + \nu(0)^2 \n{\nu}_2^2  .
\end{align}

We now consider the contribution from the cases when $(x_1, \dots, y_4) \in H_1$ but $x_2x_3y_2y_3 \neq 0$. This would imply that $x_1 = x_4$, which in turn may be substituted into \eqref{xy} to get that $y_1 = y_4$. Thus we have
\begin{align*}
\sum_{\substack{(x_1, \dots, y_4) \in H_1, \\ x_2x_3y_2y_3 \neq 0}} &   \nu(x_1) \dots \nu(y_4) 
 \leq \sum_{\substack{x_1, y_1 \in \supp(\nu), \\ x_2, x_3, y_2, y_3 \in \supp(\nu)\setminus\{0\}  }} \nu(x_1)^2\nu(y_1)^2 \nu(x_2)\dots\nu(y_3) \mathds{1}_{x_2 y_3 = x_3 y_2} \\
& = \sum_{x_1, y_1 \in \supp(\nu)} \nu(x_1)^2\nu(y_1)^2 \sum_{x_2, x_3, y_2 \in \supp(\nu)\setminus\{0\}  }\nu(x_2)\nu(x_3) \nu(y_2) \nu(y_2 x_3/x_2) \\
& \leq (\sum_{x \in \supp(\nu)} \nu(x)^2 )^2  \sum_{x_2, y_2 \in \supp(\nu)} \nu(x_2) \nu(y_2) \sum_{x_3 \in \supp(\nu)} \nu(x_3)^2 = \n{\nu}_2^6,
\end{align*}
where the last inequality follows from the Cauchy-Schwarz inequality. Combining this with \eqref{starless2}, we deduce that
\[     \sum_{(x_1, \dots, y_4) \in H_1} \nu(x_1)\dots\nu(y_4) \ll \nu(0)^3 + \nu(0)^2 \n{\nu}_2^2 + \n{\nu}_2^6 \ll \nu(0)^3 + \n{\nu}_2^6, \]
which is the desired upper bound.
\end{proof}

We will now estimate the second sum on the right hand side of \eqref{sop}. 

\begin{lemma} \label{ender2}
    We have
    \[   \sum_{(x_1, \dots, y_4) \in H_2} \nu(x_1) \dots \nu(y_4) \ll \n{\nu}_2^{4}  M(\nu)^{1/2}  +  \n{\nu}_{\infty}^2 M(\nu) + \n{\nu}_2^6 .\]
\end{lemma}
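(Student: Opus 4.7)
The plan is to reparametrise tuples in $H_2$ using the common ratio that the constraints \eqref{xy} force on them, and then convert the resulting sum into an incidence problem that can be handled by the weighted Szemer\'edi--Trotter theorem (Lemma \ref{wtst}).

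First I would observe that for $(x_1, \dots, y_4) \in H_2$, the system \eqref{xy} together with $x_2 x_3 y_2 y_3 (x_4 - x_1)(y_4 - y_1) \ne 0$ uniquely determines $b := y_2/x_2 = y_3/x_3 = (y_4 - y_1)/(x_4 - x_1) \in \RR \setminus \{0\}$. Setting $y_2 = b x_2$, $y_3 = b x_3$, $y_4 = y_1 + b(x_4 - x_1)$ factorises the sum as
\[
\sum_{(x_1, \dots, y_4) \in H_2} \nu(x_1) \cdots \nu(y_4) \;=\; \sum_{b \ne 0} f(b)^2 \, g(b),
\]
where $f(b) = \sum_{x \ne 0} \nu(x) \nu(b x)$ and $g(b) = \sum_{x_1 \ne x_4,\, y_1} \nu(x_1) \nu(x_4) \nu(y_1) \nu(y_1 + b(x_4 - x_1))$. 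Useful identities are $\sum_b f(b)^2 = M(\nu)$, $\sum_b g(b) \le 1$, and the pointwise bound $f(b) \le \n{\nu}_2^2$ from Cauchy--Schwarz.

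The next step is a geometric reformulation. Let $P = \supp(\nu) \times \supp(\nu) \subset \RR^2$ with weights $w_P(x,y) = \nu(x) \nu(y)$, and for a line $\ell$ of nonzero finite slope $s(\ell)$ let $I_P(\ell) = \sum_{(x,y) \in \ell \cap P} w_P(x,y)$. Since each point of $P$ lies on a unique line of slope $b$, the identity $g(b) = \sum_{\ell : s(\ell) = b} I_P(\ell)^2 - \n{\nu}_2^4$ holds, so
\[
\sum_{(x_1, \dots, y_4) \in H_2} \nu(x_1) \cdots \nu(y_4) \;\le\; \sum_\ell f(s(\ell))^2 \, I_P(\ell)^2,
\]
with $\ell$ ranging over lines of nonzero finite slope through at least two points of $P$.

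To bound the right-hand side I would dyadically decompose $\L$ into $\L_k = \{\ell : 2^{-k-1} < I_P(\ell) \le 2^{-k}\}$ and set $T_k = \sum_{\ell \in \L_k} f(s(\ell))^2$, so that $\sum_\ell f(s(\ell))^2 I_P(\ell)^2 \lesssim \sum_k 4^{-k} T_k$. Applying Lemma \ref{wtst} to $P$ and $\L_k$ with line weights $f(s(\ell))^2$ (using $\n{w_P}_2 = \n{\nu}_2^2$, $\n{w_P}_\infty = \n{\nu}_\infty^2$, $\n{w_L}_\infty \le \n{\nu}_2^4$, and $\n{w_L}_2 \le \n{\nu}_2^2 T_k^{1/2}$) gives
\[
2^{-k} T_k \;\lesssim\; \n{\nu}_2^{8/3} T_k^{2/3} \;+\; \n{\nu}_2^4 \;+\; \n{\nu}_\infty^2 T_k,
\]
while the identity $\sum_\ell f(s(\ell))^2 I_P(\ell) = M(\nu)$ yields the complementary bound $T_k \le 2^k M(\nu)$. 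Balancing these at the critical crossover scale $2^k \sim M(\nu)^{1/2} \n{\nu}_2^{-4}$ is where the main term $\n{\nu}_2^4 M(\nu)^{1/2}$ emerges; the residual term $\n{\nu}_\infty^2 M(\nu)$ arises at the dyadic range $2^{-k} \le \n{\nu}_\infty^2$ where the $\n{w_P}_\infty \n{w_L}_1$ tail of Lemma \ref{wtst} dominates and one has to retreat to the trivial bound; and $\n{\nu}_2^6$ absorbs the contribution of the extreme dyadic scale $2^{-k} \sim \n{\nu}_2^2$ together with the subtracted $\n{\nu}_2^4 M(\nu)$ from the conversion of $g(b)$ into the line sum.

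The main obstacle I anticipate is piecing together the three dyadic regimes at their crossover scales without incurring a spurious logarithmic loss of the form $\log(1/\n{\nu}_2)$; this should be achievable by using whichever of the ST-derived and the $M(\nu)$-derived bounds on $T_k$ is tighter at each scale. A secondary technical point is to handle the $\nu(0)^2$ discrepancy between $f(b)$ and $I_P(\ell^b_0)$ coming from the origin line, and to check that the vertical-line and horizontal-line exceptional cases contribute at most $\n{\nu}_2^6$.
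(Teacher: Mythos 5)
Your proposal is sound and rests on the same two ingredients as the paper: the reparametrisation of $H_2$ by the common ratio forced by \eqref{xy}, and the weighted Szemer\'edi--Trotter bound of Lemma \ref{wtst}. The execution, however, is genuinely different in its bookkeeping. The paper works in what is essentially the dual picture: it treats the pairs $(z,y)$ as points with weights $q(z)^2 r_z(y)$ and the curves $y=a_1+a_2z$ as lines with weights $\nu(a_1)\nu(a_2)$, applies Lemma \ref{wtst} exactly once, and exploits the fact that the quantity being bounded reappears inside $\n{w_P}_2$; this yields the self-improving inequality $\Sigma \ll \n{\nu}_2^{8/3} M(\nu)^{1/3}\Sigma^{1/3} + \n{\nu}_{\infty}^2 M(\nu) + \n{\nu}_2^6$, so no dyadic decomposition (and hence no risk of logarithmic loss) ever arises. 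Your route instead keeps the primal point set $\supp(\nu)\times\supp(\nu)$, pigeonholes the lines by richness $I_P(\ell)$, and balances the ST-derived bound $2^{-k}T_k \ll \n{\nu}_2^{8/3}T_k^{2/3}+\n{\nu}_2^4+\n{\nu}_{\infty}^2T_k$ against the first-moment bound $T_k \ll 2^k M(\nu)$; this does close, but one step you only gesture at must be made explicit: for every line $\ell$ of nonzero finite slope one has $I_P(\ell)=\sum_x \nu(x)\nu(bx+c) \le \n{\nu}_2^2$ by Cauchy--Schwarz (the analogue of the paper's bound $r_z(y)\le\n{\nu}_2^2$ in \eqref{pcw}). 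This cap is what truncates the dyadic range at $2^{-k}\ll\n{\nu}_2^2$; without it, the $\n{w_P}_1\n{w_{\L}}_{\infty}$ term contributes $\sum_k 2^{-k}\n{\nu}_2^4$ starting from $k=0$, i.e. a term of size $\n{\nu}_2^4$, which is not dominated by $\n{\nu}_2^4 M(\nu)^{1/2}+\n{\nu}_{\infty}^2M(\nu)+\n{\nu}_2^6$ when $M(\nu)$ is small. With the cap in place the three regimes sum as geometric series to exactly the claimed right-hand side, with no logarithm, and the remaining small points you raise are harmless: the diagonal you subtract in converting $g(b)$ into the line sum has the favourable sign (so nothing needs absorbing, though in any case $\n{\nu}_2^4M(\nu)\le\n{\nu}_2^6$), and vertical or horizontal lines never occur since all slopes are nonzero and finite.
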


Note that this, when combined with \eqref{sop} and Lemma \ref{tn4}, immediately gives us 
\[T(\mu_{\nu}) \ll \n{\nu}_2^{4}  M(\nu)^{1/2}  +  \n{\nu}_{\infty}^2 M(\nu) + \n{\nu}_2^6 + \nu(0)^3, \]
thus proving Theorem \ref{ptpl}. Thus, our main aim for this section is to prove Lemma \ref{ender2}.

\begin{proof}[Proof of Lemma \ref{ender2}]
Given $y, z \in \RR$, we define
\[ q(z) = \sum_{x_2, y_2 \in \supp(\nu)\setminus \{0\} }  \nu(x_2) \nu(y_2) \mathds{1}_{z = x_2/y_2} \ \ \text{and} \ \ r_z(y) = \sum_{a_1, a_2 \in \supp(\nu)}\nu(a_1)\nu(a_2) \mathds{1}_{y = a_1 + za_2}.\]
A standard double counting implies that
\begin{align} \label{fmbc}
     \sum_{(x_1, \dots, y_4) \in H_2} \nu(x_1) \dots \nu(y_4) 
     & \leq 
 \sum_{(x_1, \dots, y_4) \in H_2 } \nu(x_1) \dots \nu(y_4) \mathds{1}_{\frac{x_2}{y_2} = \frac{x_3}{y_3} = \frac{x_4 - x_1}{y_4 - y_1}}  \nonumber \\
 & \leq \sum_{z \in \supp(q)} q(z)^2 \sum_{y \in \supp(r_z)} r_z(y)^2  \nonumber \\
 & = \sum_{\substack{z \in \supp(q), \\ y \in \supp(r_z)}} q(z)^2 r_z(y) \sum_{a_1, a_2 \in \supp(\nu) } \nu(a_1) \nu(a_2) \mathds{1}_{y = a_1 + a_2 z}.
\end{align}
We are now ready to apply Lemma \ref{wtst}, and so, we record some properties of the functions $q$ and $r_z$. For instance, note that whenever $z \in \supp(q) \subset \RR \setminus \{0\}$ and $y \in \supp(r_z)$, one has
\begin{equation} \label{pcw}
    q(z) \leq \n{\nu}_2^2 \ \ \text{and} \ \ r_z(y) \leq  \n{\nu}_2^2 \ \ \text{and} \ \ \sum_{x \in \supp(q)} q(x) \leq 1 \ \ \text{and}   \ \ \sum_{w \in \supp(r_z)} r_z(w) \leq 1, 
\end{equation} 
where the first two upper bounds follow from the Cauchy--Schwarz inequality. Moreover, we have that
\[ \n{q}_2^2 =  \sum_{z \in \supp(q)} q(z)^2  = M(\nu).   \]

Putting together \eqref{fmbc} along with Lemma \ref{wtst}, we deduce that
\begin{align*}
\sum_{\substack{z \in \supp(q), \\ y \in \supp(r_z)}} q(z)^2  r_z(y)^2 
& \ll ( \sum_{\substack{z \in \supp(q), \\ y \in \supp(r_z)}} q(z)^4 r_z(y)^2 )^{1/3}  ( \sum_{\substack{z \in \supp(q), \\ y \in \supp(r_z)}} q(z)^2 r_z(y) )^{1/3} \n{\nu}_2^{4/3} \n{\nu}_1^{2/3} \\
& \ \ \ \ \ \  + \n{\nu}_{\infty}^2 ( \sum_{\substack{z \in \supp(q), \\ y \in \supp(r_z)}} q(z)^2 r_z(y) ) +  \max_{\substack{z \in \supp(q), \\ y \in \supp(r_z)}} q(z)^2 r_z(y) \n{\nu}_1^2 .
\end{align*}
Utilising the various bounds listed in \eqref{pcw}, one sees that
\begin{align*}
    \sum_{\substack{z \in \supp(q), \\ y \in \supp(r_z)}} q(z)^2  r_z(y)^2  
    & \ll \n{q}_{\infty}^{2/3} (  \sum_{\substack{z \in \supp(q), \\ y \in \supp(r_z)}} q(z)^2  r_z(y)^2)^{1/3} M(\nu)^{1/3}  \n{\nu}_2^{4/3} +  \n{\nu}_{\infty}^2 M(\nu) + \n{\nu}_2^6 \\
    & \ll \n{\nu}_2^{8/3} (  \sum_{\substack{z \in \supp(q), \\ y \in \supp(r_z)}} q(z)^2  r_z(y)^2)^{1/3} M(\nu)^{1/3}+  \n{\nu}_{\infty}^2 M(\nu) + \n{\nu}_2^6 .
\end{align*} 
Simplifying the above, we find that
\[  \sum_{\substack{z \in \supp(q), \\ y \in \supp(r_z)}} q(z)^2  r_z(y)^2  \ll \n{\nu}_2^{4}  M(\nu)^{1/2}  +  \n{\nu}_{\infty}^2 M(\nu) + \n{\nu}_2^6 .\]
Substituting this upper bound in \eqref{fmbc} furnishes the desired estimate. 
\end{proof}

It is worth noting that the our method can also treat asymmetric cases. For example, for all finite, non-empty sets $C, D\subset \RR$ with $0\notin C$, the above techniques imply that
\begin{equation} \label{asym1}
\sum_{\substack{c_1, \dots, c_4 \in C, \\ d_1, \dots, d_4 \in D}} \mathds{1}_{\frac{c_1}{c_2} = \frac{c_3}{c_4} = \frac{d_1 - d_2}{d_3 - d_4} }  \ll |C|  M(C)^{1/2}|D|^{3} + M(C) |D|^2  \ll |C|^{5/2}|D|^3 + |C|^3 |D|^2.
\end{equation}
The left hand side can be interpreted as counting commuting pairs of matrices whose diagonal and off-diagonal entries arise from sets $D$ and $C$ respectively.

\section{Proof of Corollary \ref{dmf2}}

Let $\mathcal{A} \subset \RR$ be a  finite, non-empty set. The lower bound $T(\A) \geq |\A|^5 {\rm K}^{-1}$ follows from combining \eqref{gkid} with the fact that $E(\A) \geq |\A|^3{\rm K}^{-1}$, the latter following from Cauchy--Schwarz inequality. Hence, we focus on the upper bound. Writing $H$ to be the set of all $(a_1, \dots, a_8) \in \A^8$ such that
\[ a_2 a_7 - a_3 a_6 = a_2(a_8 - a_5) - a_6(a_4 - a_1) = a_3( a_8 - a_5) - a_7(a_4 - a_1) = 0 \]
and setting
\[ H_1 = \{ (a_1, \dots, a_8) \in H : a_2a_3a_6a_7 (a_4 - a_1)(a_8 - a_5) = 0 \} \ \ \text{and}  \ \ H_2 = H \setminus H_1, \]
we see that $T(\A) = |H|$. Upon setting $\nu = |\A|^{-1} \mathds{1}_{\A}$, Lemma \ref{tn4} implies that
\[ |H_1| \ll |\A|^8(\nu(0)^3 + \n{\nu}_2^6) \ll |\A|^5 \leq |\A + \A|^{1 + 1/3} |\A|^{3 + 2/3}, \]
whence it suffices to study the contribution from $H_2$. Note that
\begin{equation}  \label{tauba}
|H_2| = \sum_{(a_1, \dots, a_8) \in H_2} \mathds{1}_{a_2/a_6 = a_3/a_7 = (a_4 - a_1)/(a_8 - a_5)} \leq \sum_{z \in \A_1/\A_1} q_1(z)^2 r_1(z), 
\end{equation}
where 
\[ \A_1 = \A \setminus \{0\}, \ \ \text{and} \ \ q_1(z) = \sum_{a, b \in \A_1} \mathds{1}_{z = a/b}, \ \ \text{and}  \ \ r_1(z) = \sum_{\substack{a, b, c, d \in \A, \\ a \neq b \ \text{and} \ c \neq d}} \mathds{1}_{z = (a-b)/(c-d)}. \]
Applying H\"{o}lder's inequality, we get that
\begin{equation} \label{4rt}
     |H_2| \leq (\sum_{z \in \A_1/\A_1} q_1(z)^3 )^{2/3}  (\sum_{z \in \A_1/\A_1} r_1(z)^3 )^{1/3}.  
\end{equation}

The rest of the proof relies on the following two key lemmas, the first of these arising from the aforementioned work of Solymosi \cite{So2009}.

\begin{lemma} \label{so}
    Given $\tau \geq 1$, let $Q_{\tau} = \{ z \in \RR  : \tau \leq q_1(z) < 2 \tau\}$. Then 
    \[ |Q_{\tau}| \ll \frac{|\A + \A|^2}{\tau^2} . \]
\end{lemma}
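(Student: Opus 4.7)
The plan is to execute the classical Solymosi angle argument. Each $z \in Q_\tau$ defines a line $\ell_z := \{(a,b) \in \mathbb{R}^2 : a = z b\}$ through the origin, and the set $P_z := (\mathcal{A}_1 \times \mathcal{A}_1) \cap \ell_z$ satisfies $|P_z| = q_1(z) \geq \tau$, where $\mathcal{A}_1 = \mathcal{A} \setminus \{0\}$. A double pigeonhole---first on the sign of $z$, and then on which of the two relevant open quadrants of $\mathbb{R}^2$ contains the bulk of the representing pairs---produces a subset $Q \subseteq Q_\tau$ with $|Q| \gg |Q_\tau|$ such that, for every $z \in Q$, at least $\tau/4$ of the points of $P_z$ lie in a single fixed open quadrant; without loss of generality this is the first quadrant.

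Next, order the slopes in $Q$ as $z_1 < z_2 < \cdots < z_N$, where $N \gg |Q_\tau|$. Two standard observations drive the remainder of the argument. First (\emph{injectivity}): for any consecutive pair $z_i < z_{i+1}$, the addition map $P_{z_i} \times P_{z_{i+1}} \to P_{z_i} + P_{z_{i+1}}$ is injective, since equating the first and second coordinates of two sums and using $z_i \neq z_{i+1}$ forces the two summands to agree. Hence $|P_{z_i} + P_{z_{i+1}}| \gg \tau^2$. Second (\emph{angular separation}): restricted to the first quadrant, a sum of the form $(z_i a + z_{i+1} b, a + b)$ has coordinate ratio $(a+b)/(z_i a + z_{i+1} b) \in [1/z_{i+1}, 1/z_i]$, so it belongs to the closed angular sector bounded by $\ell_{z_i}$ and $\ell_{z_{i+1}}$.

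Pairing the slopes disjointly as $(z_1, z_2), (z_3, z_4), \dots$, the $\lfloor N/2 \rfloor$ corresponding sectors have pairwise disjoint interiors, so the sumsets $P_{z_{2j-1}} + P_{z_{2j}}$ are pairwise disjoint. Since every coordinate of any such sum is itself an element of $\mathcal{A} + \mathcal{A}$, all these sumsets lie inside $(\mathcal{A}+\mathcal{A}) \times (\mathcal{A}+\mathcal{A})$, yielding
\[ \lfloor N/2 \rfloor \cdot \tau^2 \ll |\mathcal{A}+\mathcal{A}|^2, \]
from which the claimed bound on $|Q_\tau|$ follows after rearrangement. The only mildly delicate point is the sign/quadrant pigeonhole, which is necessary to handle the possibility that $\mathcal{A}$ contains negative entries; once that reduction is dispatched, the rest of the argument is a textbook use of Solymosi's technique.
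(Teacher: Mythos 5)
Your argument is correct: the quadrant pigeonhole, the injectivity of addition on pairs of points from two distinct lines through the origin, the convex-combination observation placing each sum in the sector between consecutive slopes, and the containment of all sums in $(\A+\A)\times(\A+\A)$ all check out, and together they give $\lfloor N/2\rfloor\,\tau^2 \ll |\A+\A|^2$ as claimed (the degenerate case $|Q_\tau|\leq 4$ being trivial since $|\A+\A|\geq|\A|\geq\tau$ whenever $Q_\tau\neq\emptyset$). The difference from the paper is one of packaging rather than substance: the paper does not re-derive Solymosi's bound but quotes it as a black box in the form \eqref{dietpepsi}, valid for subsets $\mathcal{C},\mathcal{D}\subset(0,\infty)$ with the dyadic window $[\tau,8\tau)$, and then handles signs by the reduction $\A_1=\A'\cup\A''$ with $\A'\subset(0,\infty)$, $\A''\subset(-\infty,0)$, observing that $q_1(z)\geq\tau$ forces $|z|\in Q_{\mathcal{C},\mathcal{D},\tau/4}$ for some $\mathcal{C},\mathcal{D}\in\{\A',-\A''\}$ and that $|\mathcal{C}+\mathcal{C}|,|\mathcal{D}+\mathcal{D}|\leq|\A+\A|$. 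You instead reprove the geometric core from scratch and absorb the sign issue into a pigeonhole on quadrants; this makes the lemma self-contained at the cost of a page of standard argument, while the paper's route is shorter but leans on the citation. One phrasing nit: rather than saying the sectors have ``pairwise disjoint interiors'' (sums can land on sector boundaries), it is cleaner to note that the ratio intervals $[z_{2j-1},z_{2j}]$ are pairwise disjoint because $z_{2j}<z_{2j+1}$, which is exactly what your disjoint pairing provides.
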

 
It is worth noting that Solymosi proved a slightly different result, that is, given finite, non-empty sets $\mathcal{C}, \mathcal{D} \subset (0, \infty)$, for every $\tau \geq 1$, the set $Q_{\mathcal{C}, \mathcal{D}, \tau}$ of all elements $z \in \RR$ with 
\[ \tau \leq \sum_{c \in \mathcal{C}, d \in \mathcal{D}} \mathds{1}_{z = c/d} < 8 \tau\]
satisfies 
\begin{equation} \label{dietpepsi} 
|Q_{\mathcal{C}, \mathcal{D}, \tau}|  \ll \frac{|\mathcal{C} + \mathcal{C}||\mathcal{D} + \mathcal{D}|}{\tau^2}. 
\end{equation}
In order to deduce Lemma \ref{so} from the above, write $\A_1 = \A' \cup \A''$, where $\A' \subset (0, \infty)$ and $\A'' \subset (-\infty,0)$, and note that if some $z \in \RR$ satisfies $2\tau > q_1(z) \geq \tau$, then $|z| \in Q_{\mathcal{C}, \mathcal{D}, \tau/4}$ for some choice of $\mathcal{C}, \mathcal{D} \in \{\A', -\A''\}$. For any such $\mathcal{C}, \mathcal{D}$, it is easy to see that $|\mathcal{C} + \mathcal{C}|,|\mathcal{D} + \mathcal{D}| \leq |\A+\A|$, and so, applying  \eqref{dietpepsi} delivers Lemma \ref{so}.

We now state the second lemma that we require. 

\begin{lemma} \label{jfh}
        Given $\tau \geq 1$, let $R_{\tau} = \{ z \in \RR : \tau \leq r_1(z) < 2 \tau\}$. Then we have
        \[ |R_{\tau}| \ll \frac{|\A|^6}{\tau^2} . \]
\end{lemma}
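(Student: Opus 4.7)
The plan is to prove Lemma \ref{jfh} by applying the Szemer\'{e}di--Trotter incidence theorem after a suitable dyadic decomposition, supplemented by Solymosi's ratio theorem in the large-$\tau$ regime.

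The starting observation is the geometric identity
\[ r_1(z) = \sum_{\ell \text{ of slope } z} n_\ell(n_\ell - 1), \]
where $\ell$ ranges over lines of slope $z$ in $\RR^2$ and $n_\ell$ is the number of points of $P = \A \times \A$ on $\ell$. Indeed, each ordered quadruple $(a,b,c,d) \in \A^4$ contributing to $r_1(z)$ corresponds to an ordered pair of distinct points $(c,a), (d,b) \in P$ whose connecting vector has slope $z$. Hence $R_\tau$ is precisely the set of slopes for which $P$ has many collinear pairs.

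I then perform a dyadic decomposition: for each $k \geq 1$, let $L_{z,k}$ be the number of lines of slope $z$ containing between $2^k$ and $2^{k+1}$ points of $P$. Since $r_1(z) \asymp \sum_k 4^k L_{z,k}$, a pigeonhole argument produces, for each $z \in R_\tau$, some $k$ with $4^k L_{z,k} \gg \tau/(\log |\A|)^{O(1)}$. Applying the Szemer\'{e}di--Trotter rich-lines bound, the total number of lines in $\RR^2$ incident to at least $2^k$ points of $P$ is $O(|P|^2/8^k + |P|/2^k) = O(|\A|^4/8^k + |\A|^2/2^k)$. Dividing by $\tau/(4^k (\log|\A|)^{O(1)})$ bounds the number of slopes rich at level $k$, and summing over the valid dyadic range $2^k \leq O(\sqrt{\tau})$ yields an estimate of the form $|R_\tau| \ll (|\A|^4/\tau + |\A|^2/\sqrt\tau)(\log|\A|)^{O(1)}$. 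This already suffices to deduce the desired bound $|R_\tau| \ll |\A|^6/\tau^2$ in the regime $\tau \leq |\A|^{8/3}$.

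The main obstacle lies in the large-$\tau$ regime (roughly $\tau > |\A|^{8/3}$, up to the pointwise ceiling $r_1(z) \leq |\A|^3$), where the Szemer\'{e}di--Trotter bound alone is too weak. To treat this range, I supplement the incidence argument with Solymosi's ratio theorem \eqref{dietpepsi}, applied to dyadic level-sets $D_k = \{t \in (\A-\A) \setminus \{0\} : 2^k \leq s(t) < 2^{k+1}\}$ of the representation function $s(t) = |\{(a,b) \in \A^2 : a - b = t\}|$, where $r_1(z) = \sum_t s(t) s(zt)$. The two key constraints $\sum_t s(t) = |\A|^2$ and $\sum_t s(t)^2 = E(\A) \leq |\A|^3$ provide the bounds $|D_k| \leq \min(|\A|^2/2^k, |\A|^3/4^k)$, which are interpolated to control the resulting dyadic sum. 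The delicate step is balancing the Solymosi estimate $|\{z : N_{k_1,k_2}(z) \geq \tau'\}| \ll |D_{k_1} + D_{k_1}||D_{k_2} + D_{k_2}|/\tau'^2$ against the dyadic weights $4^{k_1+k_2}$ so that the geometric sum produces the correct exponent $|\A|^6$; combining both regimes, the logarithmic overhead is absorbed into the $\ll$ notation to yield $|R_\tau| \ll |\A|^6/\tau^2$.
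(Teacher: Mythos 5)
There is a genuine gap, and it sits exactly where the lemma has content. Your first step (Szemer\'edi--Trotter rich lines for $P=\A\times\A$, dyadic in the richness $2^k$) gives, after summing the level counts $(|\A|^4/2^k+|\A|^2 2^k)\tau^{-1}(\log|\A|)^{O(1)}$ over $k$, the estimate $|R_\tau|\ll\big(|\A|^4/\tau+|\A|^2/\sqrt\tau\big)(\log|\A|)^{O(1)}$ -- but the binding term when comparing with $|\A|^6/\tau^2$ is the \emph{first} one, not the second: $|\A|^4/\tau\leq|\A|^6/\tau^2$ only for $\tau\lesssim|\A|^2$, so your claim that this regime extends to $\tau\leq|\A|^{8/3}$ is incorrect. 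Worse, in the range $\tau\lesssim|\A|^2$ the lemma is already trivial from the first moment $\sum_z r_1(z)\leq|\A|^4$, so the incidence work there buys nothing; the whole difficulty is $\tau\gg|\A|^2$ (up to $\tau\approx|\A|^3$), where one must beat $|\A|^4/\tau$.

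Your proposed treatment of that large-$\tau$ regime does not work. Solymosi's bound \eqref{dietpepsi} is stated in terms of the sumsets $|\mathcal{C}+\mathcal{C}|,|\mathcal{D}+\mathcal{D}|$, and for the dyadic level sets $D_k\subseteq\A-\A$ of the difference representation function there is no usable control on $|D_k+D_k|$: for a generic $\A$ one has $|D_1|\sim|\A|^2$ and $|D_1+D_1|\sim|\A|^4$, and in general the only bound is $|D_k+D_k|\leq|D_k|^2$, with which \eqref{dietpepsi} is weaker than the trivial pair-count $|\{z: N_{k_1,k_2}(z)\geq\tau'\}|\leq|D_{k_1}||D_{k_2}|/\tau'$. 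Chasing the exponents (e.g.\ at $\tau\sim|\A|^3$, where the lemma demands $|R_\tau|\ll1$) shows these inputs cannot produce $|\A|^6/\tau^2$; the ``balancing'' you defer is not a technicality but the missing argument. The paper instead handles all $\tau\geq C|\A|^2$ with the \emph{weighted} Szemer\'edi--Trotter theorem (Lemma \ref{wtst}): take points $(z,y)$ with $z\in R_\tau$ weighted by $s_z(y)=\#\{(a_1,a_2)\in\A^2: y=a_1+za_2\}$ and the $|\A|^2$ lines $y=b+cz$; since the quantity being bounded is itself $\|s\|_2^2\geq|R_\tau|\tau$, the bound bootstraps to $|R_\tau|\tau\ll|\A|^3|R_\tau|^{1/2}+|\A|^2|R_\tau|$, i.e.\ $|R_\tau|\ll|\A|^6/\tau^2$ once $\tau\geq C|\A|^2$. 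This self-referential $\ell^2$ weighting, which also exploits that only $|R_\tau|$ slopes occur, is precisely what your per-$z$ pigeonhole on unweighted rich lines loses; if you want to salvage your scheme, you need some such restricted or weighted incidence input in place of step two.
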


This was proved by Murphy--Roche-Newton--Shkredov \cite{MRNS2015}, see also \cite{RNR2015} for related estimates.  We provide a short proof of this here. 

\begin{proof}
 Given $z \in \supp(r_1) \subset \RR \setminus\{0\}$, we define, for every $y \in \RR$, the function 
 \[ s_z(y) = \sum_{a_1, a_2 \in \A_1} \mathds{1}_{y = a_1 + za_2}. \]
 Thus, we have that
\begin{equation} \label{pari}  
|R_{\tau}|\tau \leq  \sum_{z \in R_{\tau}} r_1(z)  = \sum_{z \in R_{\tau}} \sum_{\substack{a, b, c, d \in \A, \\ a \neq b \ \text{and} \ c \neq d}} \mathds{1}_{a + dz = b + cz}   \leq \sum_{z \in R_{\tau}} \sum_{y \in \supp(s_z)}  s_z(y) \sum_{b,c \in \A} \mathds{1}_{y = b + cz} . 
\end{equation}
We are now set to apply Lemma \ref{wtst} to estimate the sum on the right hand side. In particular, note that
\[ \sum_{z \in R_{\tau}} \sum_{y \in \supp(s_z)}  s_z(y) \leq |\A|^2 |R_{\tau}| \ \ \text{and} \ \ \sup_{z \in R_{\tau}, y \in \supp(s_z)} s_z(y)  \ll |\A|.  \]
Moreover, the second moment of the weight function $s_z$ is precisely the right hand side of \eqref{pari}, that is, 
\[  \sum_{z \in R_{\tau}} \sum_{y \in \supp(s_z)}  s_z(y)^2 =  \sum_{z \in R_{\tau}} \sum_{y \in \supp(s_z)}  s_z(y) \sum_{b,c \in \A} \mathds{1}_{y = b + cz}.  \]
Thus, upon applying Lemma \ref{wtst} and simplifying the ensuing inequality, one finds that
 % \begin{align*}
 %     \sum_{z \in R_{\tau}} \sum_{y \in \supp(s_z)}   s_z(y) \sum_{b,c \in \A} \mathds{1}_{y = b + cz} & \ll |\A|^{4/3} (  \sum_{z \in R_{\tau}} \sum_{y \in \supp(s_z)}  s_z(y)^2 )^{1/3} (  \sum_{z \in R_{\tau}} \sum_{y \in \supp(s_z)}  s_z(y) )^{1/3} \\
 %     & \ \ \ \ + |\A|^2 |R_{\tau}| + |\A|^3  \\
 %     & \ll |\A|^2 |R_{\tau}|^{1/3} (      \sum_{z \in R_{\tau}} \sum_{y \in \supp(s_z)}   s_z(y) \sum_{b,c \in \A} \mathds{1}_{y = b + cz} )^{1/3} + |\A|^2 |R_{\tau}| + |\A|^3 
 % \end{align*} 
 \[       \sum_{z \in R_{\tau}} \sum_{y \in \supp(s_z)}   s_z(y) \sum_{b,c \in \A} \mathds{1}_{y = b + cz} \ll |\A|^3 |R_{\tau}|^{1/2}  + |\A|^2 |R_{\tau}|. \]
 Combining this with the preceding discussion, we get that whenever $\tau \geq C|\A|^2$ for some absolute constant $C>0$, one has $|R_{\tau}| \ll |\A|^6/\tau^2$. On the other hand, when $\tau < C|\A|^2$, we may use the fact that $\sum_{z \in R_{\tau}} r_1(z) = |\A|^4$ to discern that 
 \[ |R_{\tau}| \leq \frac{|\A|^4}{\tau} < \frac{C|A|^6}{\tau^2}. \qedhere \]
\end{proof}

We now return to estimate the right hand side of \eqref{4rt}, and so, letting $J_1$ be the smallest integer such that $|\A| \leq 2^{J_1}$, we apply Lemma \ref{so} to deduce that
\begin{align*}
    \sum_{z \in \A_1/\A_1} q_1(z)^3   \ll \sum_{j=0}^{J_1} |Q_{2^j}|2^{3j} \ll |\A + \A|^2 \sum_{j=0}^{J_1} 2^j \ll |\A + \A|^2 2^{J_1} \ll |\A + \A|^2 |\A|.
\end{align*} 
Moreover, letting $J_2$ be the smallest integer such that $|\A|^3 \leq 2^{J_2}$, we utilise Lemma \ref{jfh} to obtain the bound
\[  \sum_{z \in \A_1/\A_1} r_1(z)^3 \ll \sum_{j=0}^{J_2} |R_{2^j}|2^{3j} \ll |\A|^6 \sum_{j=0}^{J_2} 2^j \ll |\A|^6 2^{J_2} \ll |\A|^9.  \]
Substituting these estimates into \eqref{4rt} furnishes the desired result
\[    |H_2| \leq (\sum_{z \in \A_1/\A_1} q_1(z)^3 )^{2/3}  (\sum_{z \in \A_1/\A_1} r_1(z)^3 )^{1/3} \ll |\A + \A|^{4/3} |\A|^{3 + 2/3},  \]
and so, we conclude the proof of Corollary \ref{dmf2}.

%---------------------------------------------------------------------------------------------------------------------------
%---------------------------------------------------------------------------------------------------------------------------
%---------------------------------------------------------------------------------------------------------------------------
%---------------------------------------------------------------------------------------------------------------------------
%---------------------------------------------------------------------------------------------------------------------------
%---------------------------------------------------------------------------------------------------------------------------

\section{Proofs of Theorem \ref{multbd} and Proposition \ref{diag} }

Our aim in this section is to prove Theorem \ref{multbd} and Proposition \ref{diag}. For the purposes of this section, given any non-empty set $\mathcal{A}$ lying in either $(0, \infty)$ or $(-\infty,0)$, we denote ${\rm rk}(A)$ to be the rank of the multiplicative subgroup of $\mathbb{R}^{\times}$ generated by elements of $\mathcal{A}$. We will now present the following consequence of Lemma \ref{subs}. 

\begin{lemma} \label{sbap}
    Let $r \geq 1$ and let $\mathcal{B} \subset (0, \infty)$ be a finite, non-empty set such that ${\rm rk}(\mathcal{B}) = r$. Then for every $\emptyset \neq \mathcal{B}' \subseteq \mathcal{B}$, one has
    \begin{equation}  \label{cldiag}
    E(\mathcal{B}') \ll |\mathcal{B}'|^2 + \exp(O(r))|\mathcal{B}'|. 
    \end{equation}
\end{lemma}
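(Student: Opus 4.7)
The plan is to count additive quadruples $(b_1,b_2,b_3,b_4)\in(\mathcal{B}')^4$ satisfying $b_1+b_2=b_3+b_4$ by fixing $b_4$, dividing through by it, and applying the quantitative subspace theorem (Lemma \ref{subs}) to the resulting three-variable unit equation
\[
z_1+z_2-z_3 = 1, \qquad z_i := b_i/b_4.
\]
Since $\mathcal{B}'\subseteq\mathcal{B}$, the multiplicative group $\Gamma\subseteq\mathbb{R}^{\times}$ generated by $\mathcal{B}$ contains each $z_i$, and $\Gamma$ has rank at most $r$ by hypothesis; since $\mathcal{B}\subset(0,\infty)$ the $z_i$ are moreover positive.

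First I would isolate the contribution of the \emph{non-degenerate} solutions, namely those for which no proper subsum of $\{z_1,z_2,-z_3\}$ vanishes. Lemma \ref{subs} with $l=3$ and $(c_1,c_2,c_3)=(1,1,-1)$ bounds the number of such triples $(z_1,z_2,z_3)\in\Gamma^3$ by $(24)^{4\cdot 81\cdot(3r+4)} = \exp(O(r))$, uniformly in $b_4$. Since the triple $(z_1,z_2,z_3)$ together with $b_4$ determines at most one admissible quadruple, summing over the at most $|\mathcal{B}'|$ choices of $b_4$ yields a non-degenerate contribution of at most $\exp(O(r))|\mathcal{B}'|$.

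Second I would handle the \emph{degenerate} contribution. Positivity of the $z_i$ instantly rules out vanishing of any singleton $\{z_i\}$ and of the pair $\{z_1,z_2\}$, so the only vanishing subsums of $\{z_1,z_2,-z_3\}$ that can occur are $z_1-z_3=0$ or $z_2-z_3=0$, i.e.\ $b_1=b_3$ or $b_2=b_3$. In either case, the relation $b_1+b_2=b_3+b_4$ collapses to $b_2=b_4$ or $b_1=b_4$ respectively, so each sub-case contributes at most $|\mathcal{B}'|^2$ quadruples (determined by a free pair). Combining this with the previous paragraph delivers the desired bound \eqref{cldiag}.

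The only real content is the case analysis for vanishing subsums, which is trivialised by the assumption $\mathcal{B}\subset(0,\infty)$; the extraction of the $\exp(O(r))$ factor from Lemma \ref{subs} is then plug-and-play, so I do not anticipate a serious obstacle beyond carefully stating the reduction to a unit equation in three variables.
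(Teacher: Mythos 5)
Your proposal is correct and follows essentially the same route as the paper: fix $b_4$, normalise to the unit equation $z_1+z_2-z_3=1$ with $z_i\in\Gamma$, invoke Lemma \ref{subs} to get $\exp(O(r))$ non-degenerate solutions per choice of $b_4$, and use positivity to reduce the degenerate subsums to $b_1=b_3$ or $b_2=b_3$, each contributing $O(|\mathcal{B}'|^2)$. No gaps to flag.
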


\begin{proof}
   Let $\mathcal{B}'$ be a non-empty subset of $\mathcal{B}$, and suppose $b_1, \dots, b_4 \in \mathcal{B}'$ satisfy 
\begin{equation} \label{esty}
b_1 + b_2 - b_3 = b_4.
\end{equation}
Since $0 \notin \mathcal{B}'$ and $\mathcal{B}' \subset (0, \infty)$, the only way that there could exist some subsum on the left hand side of \eqref{esty} which sums to $0$  is if either $b_1 = b_3$ or $b_2 = b_3$. There are $O(|\mathcal{B}'|^2)$ possibilities for such a quadruple. Since this matches the upper bound for $E(\mathcal{B}')$ in \eqref{cldiag}, we may assume that there are no subsums summing to $0$ in the right hand side of \eqref{esty}. Fixing $b_4 \in \mathcal{B}'$, we see that any $b_1, b_2, b_3$ satisfying \eqref{esty} also satisfy
\begin{equation} \label{jfkg} 
\frac{b_1}{b_4} + \frac{b_2}{b_4} - \frac{b_3}{b_4} = 1. 
\end{equation}
Since $b_4^{-1} \cdot \mathcal{B}' \subseteq b_4^{-1} \cdot \mathcal{B} \subseteq \Gamma$ and the rank of $\Gamma$ is at most $O(r)$, Lemma \ref{subs} implies that there are at most $\exp(O( r))$ many solutions to \eqref{jfkg}. Putting this together with the preceding discussion furnishes the required estimate
\[ E(\mathcal{B}') \ll | \mathcal{B}'|^2 + \exp(O(r)) |\mathcal{B}'|. \qedhere\]
\end{proof}

We will now prove the aforementioned special case of Proposition \ref{diag} when $\nu =|\A'|^{-1}\mathds{1}_{\A'}$ for some non-empty $\A' \subset A$.

\begin{lemma} \label{unwtdiag}
    Let $K>1$ and let $\A \subset \RR$ be a finite, non-empty set such that $|\A \cdot \A| \leq K|\A|$. Then for any non-empty set $\A' \subseteq \A$, we have 
\[ E(\A') \ll K^{O(1)} |\A'|^2. \]
\end{lemma}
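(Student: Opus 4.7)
The plan is to combine the weak Polynomial Freiman--Ruzsa conjecture (Lemma \ref{wkpf}), Ruzsa's covering lemma (Lemma \ref{rzcov}), and Lemma \ref{sbap}, transferring the multiplicative hypothesis on $\A$ into the additive setting of Lemma \ref{wkpf} via the logarithm map.

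After replacing $\A$ by $-\A$ if necessary (neither the hypothesis nor $E(\A')$ is altered), I may assume $\A_+ := \A \cap (0, \infty)$ satisfies $|\A_+| \gg |\A|$, so that $|\A_+ \cdot \A_+| \leq K|\A| \ll K|\A_+|$. The additive subgroup of $(\RR, +)$ generated by $\log \A_+$ is finitely generated and torsion-free, hence isomorphic to $\mathbb{Z}^D$ for some $D$; under this isomorphism, $\log \A_+$ has additive doubling $O(K)$, and Lemma \ref{wkpf} produces a subset $\mathcal{L} \subseteq \log \A_+$ with $|\mathcal{L}| \gg |\A_+|/K^C$ contained in a sublattice of rank $d \ll \log K$. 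Exponentiating, the set $\A_2 := \exp \mathcal{L} \subseteq \A_+$ lies in a multiplicatively generated subgroup $G \leq \RR^{\times}$ of rank at most $d$, with $|\A_2| \gg |\A_+|/K^C$.

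Since $|\A_+ \cdot \A_2| \leq |\A_+ \cdot \A_+| \ll K^{C+1}|\A_2|$, the multiplicative form of Lemma \ref{rzcov} (with the standard refinement that the covering set may be taken inside the set being covered) yields $X \subseteq \A_+$ with $|X| \ll K^{C+1}$ and $\A_+ \subseteq X \cdot \A_2 \cdot \A_2^{-1} \subseteq X \cdot G$. For arbitrary $\A' \subseteq \A$, write $\A'_+ = \A' \cap \A_+$ and $\A'_{+,x} = \A'_+ \cap (xG)$ for $x \in X$. Since $x > 0$, the scaled set $x^{-1} \cdot \A'_{+,x} \subseteq G$ lies in $(0, \infty)$ and has multiplicative rank at most $d$. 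Using the scale-invariance of additive energy and Lemma \ref{sbap},
\[ E(\A'_{+,x}) = E(x^{-1} \A'_{+,x}) \ll |\A'_{+,x}|^2 + \exp(O(d))|\A'_{+,x}| \ll K^{O(1)}(|\A'|^2 + |\A'|). \]
Applying \eqref{union3} of Lemma \ref{wtun} to the covering $\A'_+ = \bigcup_{x \in X} \A'_{+,x}$ in $(\RR, +)$ with $\nu = \mathds{1}_{\A'_+}$, together with the power-mean inequality, gives
\[ E(\A'_+) \leq |X|^3 \sum_{x \in X} E(\A'_{+,x}) \ll K^{O(1)} |\A'|^2. \]
The corresponding bound for $\A' \cap (-\infty, 0)$ follows by applying the argument to $-\A$, and the contribution from $\{0\} \cap \A'$ is trivial; one further application of \eqref{union3} combines these three pieces to yield $E(\A') \ll K^{O(1)} |\A'|^2$.

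The main obstacle I anticipate is the careful transition between the multiplicative structure of $\RR^{\times}$ and the additive structure of $(\mathbb{Z}^D, +)$ required by Lemma \ref{wkpf}: one must fix an isomorphism of the torsion-free subgroup generated by $\log \A_+$ with some $\mathbb{Z}^D$, verify that additive doubling is preserved, and transfer the conclusion back to a multiplicative subgroup of $\RR^{\times}$ of rank $O(\log K)$. A secondary technical point is ensuring that the Ruzsa covering set $X$ lies inside $\A_+ \subset (0, \infty)$, so that rescaling by $x^{-1}$ preserves positivity and keeps Lemma \ref{sbap} applicable. Once these are set up correctly, the remaining steps amount to standard additive-combinatorial bookkeeping.
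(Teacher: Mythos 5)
Your overall architecture (restrict to the positive part, take logarithms and apply Lemma \ref{wkpf} to obtain a large subset $\A_2$ inside a multiplicative subgroup $G$ of rank $O(\log K)$, cover via Lemma \ref{rzcov}, bound each piece by Lemma \ref{sbap}, recombine with \eqref{union3}) is the same as the paper's, but the treatment of the negative elements has a genuine gap. The sentence ``the corresponding bound for $\A'\cap(-\infty,0)$ follows by applying the argument to $-\A$'' does not work: your initial normalisation chose the sign so that $\A_+$ is large, and the other part $\A_-=\A\cap(-\infty,0)$ may be a vanishing proportion of $\A$, with standalone multiplicative doubling as large as roughly $K|\A|/|\A_-|$. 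Running your argument on $-\A$ (whose positive part is $-\A_-$) feeds this large doubling into Lemma \ref{wkpf} and into the covering step, and yields only $E(\A'\cap(-\infty,0))\ll (K|\A|/|\A_-|)^{O(1)}|\A'|^2$, not $K^{O(1)}|\A'|^2$. This matters because $\A'$ may lie entirely inside $\A_-$: for instance take $\A_+=\{2^j:1\le j\le N\}$ and $\A_-=-\{2^j:j\in S\}$ with $S\subset[N]$ an additive Sidon set of size about $\sqrt{N}$; then $|\A\cdot\A|\ll|\A|$, so $K=O(1)$, yet $|\A_-\cdot\A_-|\gg|\A_-|^2$, and for $\A'=\A_-$ your route gives only a bound of the shape $|\A|^{O(1)}|\A'|^2$.

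The fix is the paper's, and it is the opposite of what you flag as your ``secondary technical point'': one should \emph{not} insist that the covering set lie in $\A_+$. Having built $\A_2\subseteq\A_+$ with $|\A_2|\gg|\A|/K^{O(1)}$ and rank $O(\log K)$, cover the whole of $\A'$ (both signs) at once: since $|\A'\cdot\A_2|\le|\A\cdot\A|\le K|\A|\ll K^{O(1)}|\A_2|$, Ruzsa's covering lemma gives a set $\mathcal{S}\subset\RR\setminus\{0\}$ with $|\mathcal{S}|\ll K^{O(1)}$ and $\A'\subseteq\mathcal{S}\cdot\A_2\cdot\A_2^{-1}$. The scalars $s\in\mathcal{S}$ may well be negative, but for each piece $\mathcal{B}_s\subseteq\A'\cap\bigl(s\cdot\A_2\cdot\A_2^{-1}\bigr)$ the dilate $s^{-1}\cdot\mathcal{B}_s$ lies in $\A_2\cdot\A_2^{-1}\subset(0,\infty)$, a set of rank $O(\log K)$, so Lemma \ref{sbap} applies after dilation regardless of the sign of $s$, and \eqref{union3} recombines the $\ll K^{O(1)}$ pieces exactly as in your final step. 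With this change (and the trivial treatment of $0\in\A'$), your proof coincides with the paper's; the rest of your argument — the transfer to $\mathbb{Z}^D$ via logarithms for Lemma \ref{wkpf}, the dilation invariance of $E$, and the H\"older/union bookkeeping — is correct.
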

\begin{proof}
% Let $\A \subset \RR$ be a finite, non-empty set such that $|\A \cdot \A| \leq K|\A|$ and let $\nu: \A \to [0,1]$ be a function. 
% We want to prove that for any non-empty set $\A' \subseteq \A$, we have 
% \[ E(\A') \ll ( K \log( 2|\A'|))^{O(1)} |\A'|^2. \]
We may assume that $|\A'| \geq 10,$ since otherwise, we can use the trivial bound $E(\A') \leq |\A'|^3 \ll |\A'|^2$. Next, if $\{0,1,-1\} \cap \A' \neq \emptyset$, then the the number of $a_1, \dots, a_4 \in \A'$ such that $a_1 + a_2 = a_3 + a_4$ and $a_i \in \{0,1,-1\}$ for some $1 \leq i \leq 4$ can be seen to be $O(|\A'|^2)$. Moreover, writing $\A_1' = \A' \setminus \{0,1,-1\}$ and $\A_1 = \A \setminus \{0,1,-1\}$, we see that 
\[ |\A_1 \cdot \A_1| \leq |\A \cdot \A| \leq K|\A| \leq 2K |\A_1|. \]
Thus, it suffices to prove our result for the sets $\A_1' \subseteq \A_1$, that is, we may assume that $ \{0,1,-1\} \cap \A = \emptyset$. 

We will now show that there exists some $\mathcal{B} \subseteq \A$ such that 
\begin{equation} \label{cldiag100}
    |\mathcal{B}| \gg |\A|/K^{O(1)} \ \ \text{and} \ \ {\rm rk}(\mathcal{B} \cdot \mathcal{B}^{-1} ) = {\rm rk}(\mathcal{B}) \ll \log (2K).
\end{equation} 
 In order to see this, note that either $|\A \cap (0, \infty)| \geq |\A|/2$ or $|\A \cap (-\infty,0)| \geq |\A|/2$. Since $|\A\cdot \A|$ and the conclusion \eqref{cldiag100} are somewhat invariant under dilation of $\mathcal{B}$ and $\mathcal{A}$, we may assume that the set $\A_2 = \A \cap (0,\infty)$ satisfies $|\A_2| \geq |\A|/2$. Moreover, we have that
\[ |\A_2 \cdot \A_2| \leq |\A \cdot \A| \leq K |\A| \leq 2K |\A_2|.\]
Now, since $A_2 \subset (0, \infty) \setminus \{1\}$, it generates a multiplicative subgroup $V$ of $\mathbb{R}^{\times}$ which is isomorphic to $\mathbb{Z}^r$ for some $r \in \mathbb{N}$. We may now apply Lemma \ref{wkpf} to find some $\mathcal{B} \subseteq \A_2$ such that
\[ |\mathcal{B}| \gg |\A_2|/K^{O(1)} \gg |\A|/ K^{O(1)} \]
and the multiplicative group $\Gamma$ generated by $\mathcal{B}$ has rank at most $O(\log 2K)$. 

% We now want to estimate $E(\mathcal{B}')$ for every finite, non-empty subset $\mathcal{B}'$. Thus,  

With Lemma \ref{sbap} and \eqref{cldiag100} in hand, we note that
% \begin{equation} \label{cov1}
 \[   |\A' \cdot \mathcal{B}| \leq |\A \cdot \A| \leq K |\A| \ll K^{O(1)} |\mathcal{B}|.  \]
% \end{equation} 
Applying Lemma \ref{rzcov}, we find some set $\mathcal{S} = \{s_1, \dots, s_k\}  \subset \mathbb{R} \setminus \{0\}$ such that $\mathcal{A}' \subseteq \mathcal{S} \cdot \mathcal{B} \cdot \mathcal{B}^{-1}$ and $k \ll K^{O(1)}$. Define the sets $\mathcal{B}_1, \dots, \mathcal{B}_k$ iteratively by setting 
\begin{equation} \label{iterativedecom} 
\mathcal{B}_1 = \A' \cap (s_1 \cdot \mathcal{B} \cdot \mathcal{B}^{-1})\ \ \text{and} \ \  \mathcal{B}_i = ( \A' \cap (s_i \cdot \mathcal{B} \cdot \mathcal{B}^{-1})) \setminus ( \cup_{j=1}^{i-1}  \mathcal{B}_{j} ). 
\end{equation}
Thus, $\mathcal{B}_1, \dots, \mathcal{B}_k$ are pairwise disjoint sets such that $\A' = \cup_{i=1}^k \mathcal{B}_i$. For every $1 \leq i \leq k$, since $s_i^{-1} \cdot \mathcal{B}_i \subseteq \mathcal{B}  \cdot \mathcal{B}^{-1}$, Lemma \ref{sbap} and inequalities \eqref{cldiag} and \eqref{cldiag100} combine to furnish the bound
\[ E(\mathcal{B}_i) = E(s_i^{-1} \cdot \mathcal{B}_i) \ll  |s_i^{-1} \cdot \mathcal{B}_i|^2 + K^{O(1)} |s_i^{-1} \cdot \mathcal{B}_i| =|\mathcal{B}_i|^2 + K^{O(1)} | \mathcal{B}_i| . \]
Putting this together with a straightforward application of \eqref{union3}, we get that
\begin{align*}
     E(\mathcal{A}') & = E(\cup_{i=1}^k \mathcal{B}_i) \ll k^4 \max_{1 \leq i \leq k} E(\mathcal{B}_i)   \ll K^{O(1)} \max_{1 \leq i \leq k} | \mathcal{B}_i|^2 \ll K^{O(1)} |\A'|^2. 
\end{align*}
This concludes the proof of Lemma \ref{unwtdiag}.
\end{proof}

We are now ready to prove Proposition \ref{diag}.

\begin{proof}[Proof of Proposition \ref{diag}]
For ease of exposition, we use the notation in \eqref{rhn4} with $G = \RR$ and the group operation being addition, and define
\[ E_{\nu}(\mathcal{B}) = \sum_{b_1, \dots, b_4 \in \mathcal{B}} \nu(b_1) \dots \nu(b_4) \mathds{1}_{b_1 + b_2 = b_3 + b_4} \]
for every finite, non-empty set $\mathcal{B}\subset \RR$. Now, let $J>0$ be the largest integer such that $2^{-J} < \n{\nu}_2^8$, and for every $1 \leq i \leq J$, let $\A_i = \{ x \in \A :  2^{-i} \n{\nu}_2 < \nu(x) \leq 2^{-i+1} \n{\nu}_2 \}$. Set $\A_0 = \cup_{1 \leq i \leq J} \A_i$ and $\A' = \A \setminus\A_0$. Applying \eqref{union3}, we see that
   \[ E_{\nu}(\A) = E_{\nu}(\A_0 \cup \A') \ll E_{\nu}(\A') + E_{\nu}(\A_0) . \]
   Note that 
   \begin{align*}
        E_{\nu}(A') & =  \sum_{a_1, a_2, a_3 \in \A'} \nu(a_1) \nu(a_2) \nu(a_3) \nu(a_4) \mathds{1}_{\A'}(a_1 + a_2 - a_3) \\
        & \ll \n{\nu}_2^8  \sum_{a_1, a_2, a_3 \in \A'} \nu(a_1) \nu(a_2) \nu(a_3) \ll \n{\nu}_2^8 \nu(\A)^3 \ll \n{\nu}_2^8,
   \end{align*}
   and so, it suffices to estimate $E_{\nu}(\A_0)$. We begin by applying \eqref{union3} again to deduce that
   \[ E_{\nu}(\A_0)^{1/4} \leq \sum_{i=1}^J E_{\nu}(\A_i)^{1/4} \ll \sum_{i=1}^J 2^{-i} \n{\nu}_2 E(\A_i)^{1/4}.\]
   We may now use Lemma \ref{unwtdiag} to get that
   \[E_{\nu}(\A_0)^{1/4} \ll  {\rm M}^{O(1)} \sum_{i=1}^J 2^{-i}\n{\nu}_2 |\A_i|^{1/2}  \ll {\rm M}^{O(1)} \n{\nu}_2 J. \]
We finish our proof by noting that $2^J \ll 1/\n{\nu}_2^8$, whence, $J \ll \log (4/ \n{\nu}_2)$.
\end{proof}

We conclude this section by proving Theorem \ref{multbd}.
% and Corollary \ref{commutator}. We proceed first with the former.

\begin{proof}[Proof of Theorem \ref{multbd}] 
% Given $\delta \in (0,1)$ and a finitely-supported probability measure $\nu$ on $\RR$ with $M(\nu) \geq \n{\nu}_2^{2 + \delta}$, we may apply Lemma \ref{bsg} to find some $\A \subset \supp(\nu)\setminus \{0\}$ such that 
% \[ |\A| \gg \frac{1}{ \n{\nu}_2^{2 - O(\delta)}} \ \ \text{and} \ \ |\A \cdot \A| \ll \n{\nu}_2^{-O(\delta)}|\A| \ \ \text{and} \ \ \nu(a) \gg \n{\nu}_2^{2 + 2\delta} \ \ \text{for every} \ \ a \in \A. \]
% Writing $\nu' = \mathds{1}_{\A} \nu$, we see that 
% \[  \n{\nu}_2^{2 + O(\delta)} \ll |\A|\n{\nu}_2^{4 + 4 \delta} \ll \n{\nu'}_2^2 \leq \n{\nu}_2^2.\]
% We may now apply Proposition \ref{diag} to get that
% \[ E_{\nu'}(A) \ll ( \n{\nu}_2^{-\delta} \log( 2|\A|) \cdot \log (4/\n{\nu'}_2))^{O(1)} \n{\nu'}_2^4.  \]
% Since $|\A| \n{\nu'}_2^{2 + 2\delta} \leq |\A|\n{\nu}_2^{2 + 2\delta} \ll \nu'(\A) \leq 1$, the above gives us
% \begin{equation} \label{enes}
%     E_{\nu'}(\A)  \ll (\n{\nu}_2^{-\delta} \log (4/ \n{\nu'}_2))^{O(1)} \n{\nu'}_2^4 \ll_{\delta} \n{\nu}_2^{-O(\delta)} \n{\nu'}_2^4 . 
% \end{equation}
Let $\A$ be a finite, non-empty set $\A \subset \RR$ such that $|\A \cdot \A| = {\rm M}|\A|$. We may assume that $|\A| \geq 10$ since otherwise we are done. We will first prove the upper bound in Theorem \ref{multbd}. Setting $\nu = |\A|^{-1} \mathds{1}_{\A}$, Lemma \ref{unwtdiag} implies that
\begin{equation} \label{blel}
    E(\A) \ll {\rm M}^{C} |\A|^2
\end{equation}
for some absolute constant $C>0$. Applying Lemma \ref{tn4} and writing $\A' = \A \setminus\{0\}$, we see that
\[ T(\A) \ll |\A|^5 + \sum_{z \in \A'/\A'} q(z)^2 \sum_{a_1, \dots, a_4 \in \A} \mathds{1}_{a_1 - a_2 = z(a_3 - a_4)}, \]
where $q(z) = \sum_{b_1, b_2 \in \A'} \mathds{1}_{b_1 = b_2 z}.$ Since $q(z) \leq |\A|$ for all $z \in \A'/\A'$ and $\sum_{z \in \A'/\A'}q(z) \leq |\A|^2$, one gets that
\[ T(\A) \ll |\A|^5 + |\A|^3\max_{z \in \A'/\A'} \sum_{a_1, \dots, a_4 \in \A} \mathds{1}_{a_1 - a_2 = z(a_3 - a_4)}.\]
% Observe that $\sum_{z \in \A'/\A'} q(z) \leq \n{\nu'}_1^2 \leq 1$. Moreover, applying Cauchy--Schwarz inequality, we see that $q(z) \leq \n{\nu'}_2^2$ for every $z \in \A'/\A'$, whence, 
% \begin{equation} \label{brs}
%     T(\mu_{\nu'}) \ll \nu'(0)^3 + \n{\nu'}_2^6 + \n{\nu'}_2^2 \sup_{z \in \A'/\A'} \sum_{a_1, \dots, a_4 \in \A} \nu(a_1) \dots \nu(a_4) \mathds{1}_{a_1 - a_2 = z(a_3 - a_4)}.
% \end{equation}
As $0 \notin \A'/\A'$, Lemma \ref{wtun} gives us
\begin{equation} \label{spct}
T(\A) \ll |\A|^5 + |\A|^3 E(\A)^{1/2}  \max_{z \in \A'/\A'} E(z\cdot \A)^{1/2}  = |\A|^5 + |\A|^3 E(\A) .
\end{equation}
Putting this together with \eqref{blel} dispenses the desired upper bound. 

We now consider the lower bound. If $0 \in \A$, then we can consider solutions to \eqref{xy} satisfying $x_2 = x_3 = 0$ and $x_4 = x_1$ to get that $T(\A) \geq |\A|^5$, whereupon, it suffices to consider the case when $0 \notin \A$. Here, note that for any given $z \in \A/\A$, all $x_1, \dots, y_4 \in \A$ satisfying $x_i = z y_i$ for every $1 \leq i \leq 4$ also satisfy \eqref{xy}. In particular, writing $q'(z) = \sum_{a_1, a_2 \in \A}\mathds{1}_{z = a_2/a_1}$, we have that
\begin{equation} \label{multlbd}
    T(\A) \geq \sum_{z \in \A/\A} q'(z)^4 \geq \frac{(\sum_{z \in \A/\A} q'(z))^4}{|\A/\A|^3}, 
\end{equation} 
where the second step follows from H\"{o}lder's inequality. As before, we see that
\[ \sum_{z \in \A/\A} q'(z) = |\A|^2\]
while a standard application of the Pl\"{u}nnecke--Ruzsa inequality \eqref{prineq} furnishes the estimate
\[ |\A/\A| \leq (|\A\cdot \A|/|\A|)^2 |\A| = {\rm M}^2 |\A|. \]
Combining these with \eqref{multlbd} delivers the claimed lower bound.
\end{proof}

\section{Proof of Theorem \ref{rsk24}}

We begin this section by restating the connection between $T(\A)$ and energies over affine groups. For every $a \in \mathbb{R}\setminus \{0\}$ and $b \in \mathbb{R}$, we define 
\[ \frak{m}_{a,b} = \begin{pmatrix} a & b \\ 0 & 1
\end{pmatrix} .\]
Given a non-empty set $\A \subseteq \RR$, define the set
\[ {\rm Aff}(\A) = \{ \frak{m}_{a,b} : a \in \A \setminus \{0\}, b \in \A\} .\]
We note that ${\rm Aff}(\RR)$ is a group with the group operation being matrix multiplication. Given some finitely-supported function $\nu : \RR \to [0,1]$, we can define a finitely-supported function $\sigma_{\nu}: {\rm Aff}(\RR) \to [0,1]$ by setting $\sigma_{\nu}(\frak{m}_{a,b}) = \nu(a) \nu(b)$ for every $a \in \RR \setminus \{0\}$ and $b \in \RR$. Furthermore, note that
% \[ \begin{pmatrix} a & b \\ 0 & 1
% \end{pmatrix}\begin{pmatrix} x & y  \\ 0 & 1
% \end{pmatrix} = \begin{pmatrix} ax & ay + b\\ 0 & 1
% \end{pmatrix} \ \ \text{and} \  \ 
% \begin{pmatrix} a & b \\ 0 & 1
% \end{pmatrix}^{-1} =  \begin{pmatrix} 1/a & -b/a \\ 0 & 1
% \end{pmatrix} ,\]
\[ \frak{m}_{a,b} \frak{m}_{x,y} = \frak{m}_{ax, ay +b} \ \ \text{and} \ \ \frak{m}_{a,b}^{-1} = \frak{m}_{a^{-1},-a^{-1}b} \]
holds for all $a,b,x,y \in \RR$ with $ax\neq 0$. Hence, given $a_1, \dots, b_4 \in \RR$ with $a_1a_3b_1b_3 \neq 0$, we see that $\frak{m}_{a_1,a_2} \frak{m}_{a_3,a_4}^{-1} = \frak{m}_{b_1,b_2} \frak{m}_{b_3,b_4}^{-1} $ holds if and only if
% \[\begin{pmatrix} a_1 & a_2 \\ 0 & 1
% \end{pmatrix} \begin{pmatrix} a_3 & a_4 \\ 0 & 1
% \end{pmatrix}^{-1} =  \begin{pmatrix} b_1 & b_2 \\ 0 & 1
% \end{pmatrix} \begin{pmatrix} b_3 & b_4 \\ 0 & 1
% \end{pmatrix}^{-1}  \]
% \[ \frak{m}_{a_1,a_2} \frak{m}_{a_3,a_4}^{-1} = \frak{m}_{b_1,b_2} \frak{m}_{b_3,b_4}^{-1} \ \ \text{holds if and only if} \ \
% \[ a_1/a_3 = b_1/b_3 \ \ \text{and} \ \ - a_1a_4/a_3 + a_2 =  -b_1 b_4/b_3 + b_2 \]
% \[ a_1/a_3 = b_1/b_3 \ \ \text{and} \ \  a_2 - b_2 =    a_1a_4/a_3  -b_1 b_4/b_3 = b_1 (a_4 - b_4)/b_3 \]
% \[ a_1b_3 = b_1 a_3 \ \ \text{and} \ \  b_3(a_2 - b_2)  = b_1 (a_4 - b_4) \]
\[ a_1/a_3 = b_1/b_3 \ \ \text{and} \ \ b_3( a_2 - b_2) = b_1 (a_4 - b_4) . \]
Defining
\[ X = \begin{pmatrix}
    b_2 & a_1 \\
    b_1 & a_2
\end{pmatrix}
\ \ \text{and} \ \ Y = \begin{pmatrix}
    b_4 & a_3 \\
    b_3 & a_4 
\end{pmatrix}
\]
and recalling \eqref{xy}, we see that $\frak{m}_{a_1,a_2} \frak{m}_{a_3,a_4}^{-1} = \frak{m}_{b_1,b_2} \frak{m}_{b_3,b_4}^{-1} $ holds if and only if $XY = YX$. Combining this observation with Lemma \ref{tn4} and \eqref{rhn4}, we discern that
\begin{equation} \label{affen}
    T(\mu_{\nu}) \ll E_{\sigma_{\nu}}({\rm Aff}(\supp(\nu))) + \nu(0)^3 + \n{\nu}_2^6. 
\end{equation} 

We will now present our proof of Theorem \ref{rsk24}.

\begin{proof}[Proof of Theorem \ref{rsk24}]
    Thus, let $\nu$ be a probability measure such that $\A:=\supp(\nu)$ is a finite, non-empty subset of $\RR$. For every $j \in \mathbb{N}$, let $\A_j = \{ x \in \A : 2^{-j} < \nu(x) \leq 2^{-j+1}\} $ and let $J \in \mathbb{N}$ be the smallest integer such that $2^{-J} < \n{\nu}_2^{16}$. Finally, let $\A' = \A \setminus \cup_{1 \leq j \leq J}\A_j$. Noting \eqref{affen}, it suffices to show that $E_{\sigma_{\nu}}({\rm Aff}(\A)) \ll \n{\nu}_2^{5+ c}$ for some fixed constant $c>0$. 

For every $1 \leq i, j \leq J$, we let
\[ S_{i,j} = \bigg\{ \begin{pmatrix} a & b \\ 0 & 1
\end{pmatrix} : a \in \A_i \setminus \{0\}, b \in \A_j \bigg\} \ \ \text{and} \ \  S'= {\rm Aff}(\A) \setminus (\cup_{1 \leq i,j \leq J} S_{i,j} ). \]
Applying Lemma \ref{wtun}, we get that
\begin{equation} \label{hu1}
    E_{\sigma_{\nu}}({\rm Aff}(\A))^{1/4} \ll \sum_{1 \leq i,j \leq J} E_{\sigma_{\nu}}(S_{i,j})^{1/4} + E_{\sigma_{\nu}}(S')^{1/4}.
\end{equation}
From the definition of $S'$, we get that $\sigma_{\nu}(s) \ll 2^{-J} \ll \n{\nu}_2^{16}$ for every $s \in S'$, whence,
\begin{equation} \label{ert5}
E_{\sigma_{\nu}}(S') = \sum_{s_2, s_3 ,s_4\in S'} \sigma_{\nu}(s_2) \sigma_{\nu}(s_3)  \sigma_{\nu}(s_4) \sigma_{\nu}(s_3 \cdot s_4^{-1}\cdot s_2)  \ll \max_{s \in S'} \sigma_{\nu}(s)  \ll \n{\nu}_2^{16}. 
\end{equation}

For any $1 \leq i,j \leq J$, we see that
\begin{align*} 
E_{\sigma_{\nu}}(S_{i,j}) 
& = \sum_{c_1,\dots,c_4 \in \A_i \setminus \{0\}}\sum_{d_1,\dots,d_4 \in \A_j} \nu(c_1) \dots \nu(d_4) \mathds{1}_{c_1/c_2 = c_3/c_4} \mathds{1}_{c_4(d_1 - d_3) = c_3(d_2 - d_4)} \\
& \ll 2^{-4i-4j} \sum_{c_1,\dots,c_4 \in \A_i \setminus \{0\}} \sum_{d_1,\dots,d_4 \in \A_j} \mathds{1}_{c_1/c_2 = c_3/c_4} \mathds{1}_{c_4(d_1 - d_3) = c_3(d_2 - d_4)}.
\end{align*}
Writing $\A_i' = \A_i \setminus \{0\}$, if $|\A_j|^{1/2} \leq |\A_i'|\leq |\A_j|^2$,  then we may apply Lemma \ref{rudshk} to get that
\begin{align*}
    E_{\sigma_{\nu}}(S_{i,j}) \ll 2^{-4i} 2^{-4j} |\A_i'|^{5/2 - c_1} |\A_j|^3 
    = (|\A_i'| 2^{-2i})^{3/2 + c_1} (|\A_i'|2^{-i})^{1 - 2c_1}(|\A_j|2^{-2j})(|\A_j|2^{-j})^2
\end{align*}
holds for some absolute constant $c_1 \in (0,1)$. Since 
\begin{equation} \label{ndb}
2^{-2k} |\A_k| \ll \n{\nu}_2^2 \ \ \text{and} \ \ 2^{-k}|\A_k| \ll \n{\nu}_1 \leq 1 
\end{equation}
for every $k \in \{i,j\}$, we get that
\begin{equation} \label{hu2}
    E_{\sigma_{\nu}}(S_{i,j}) \ll \n{\nu}_2^{5 + 2c_1}.
\end{equation}
 
On the other hand, if $|\A_i'| < |\A_j|^{1/2}$, then \eqref{asym1} gives us
\begin{equation} \label{ans}
    E_{\sigma_{\nu}}(S_{i,j}) \ll 2^{-4i} |\A_i'|^{5/2} 2^{-4j}|\A_j|^3 + 2^{-4i}|\A_i'|^3 2^{-4j}|\A_j|^2.
\end{equation}
Utilising \eqref{ndb} immediately gives us that 
\[ 2^{-4i}|\A_i'|^3 2^{-4j}|\A_j|^2 \ll \n{\nu}_2^6,\]
whereupon, it suffices to estimate the first term on the right hand side in \eqref{ans}. We first consider the case when $2^{-2i} |\A_i'| \leq (2^{-2j}|\A_j|)^{15/16}$. Here, we have that
\begin{align*}
2^{-4i} |\A_i'|^{5/2} 2^{-4j}|\A_j|^3 
& \ll 2^{-15j/4- 4j}|\A_j|^{15/8+3} |\A_i'|^{1/2} \ll 2^{-7j - 3j/4} |\A_j|^{5 + 1/8} \\
& = (2^{-2j} |\A_j|)^{2 + 5/8} (2^{-j}|\A_j|)^{2+1/2} \ll \n{\nu}_2^{5 + 1/4}.
\end{align*}
Moreover, if $2^{-2j}|\A_j| < (2^{-2i} |\A_i'|)^{16/15}$, then 
\[ 2^{-4i} |\A_i'|^{5/2} 2^{-4j}|\A_j|^3 \ll 2^{-4i - 32i/15}|\A_i'|^{5/2 + 16/15} \ll 
% (\n{\nu}_2^2)^{5/2 + 1/15} =
\n{\nu}_2^{5 + 2/15}.
 \]
 % \[ 4 + 32/15 - 5/2 - 16/15 = 3/2 + 16/15 = 5/2 + 1/15\]
 In either case, whenever $|\A_i'| < |\A_j|^{1/2}$, one gets that
 \begin{equation} \label{hu3}
     E_{\sigma_{\nu}}(S_{i,j}) \ll \n{\nu}_2^{5 + 2/15}.
 \end{equation}

 We now turn to the case when $|\A_i'| \geq |\A_j|^{2}$. As in the preceding argument, we first consider the subcase when $(2^{-2i} |\A_i'|)^{7/8} \geq 2^{-2j}|\A_j|$. Here, we see that
 \[ 2^{-4i} |\A_i'|^{5/2} 2^{-4j}|\A_j|^3 \ll 2^{-4i - 7i/2}|\A_i'|^{5/2 + 7/4} |\A_j| \ll 2^{-7i-i/2}|\A_i'|^{4 + 3/4} \ll
 % \n{\nu}_2^{2(3 -1/4)} =
 \n{\nu}_2^{5 + 1/2}.\]
 Moreover, if $2^{-2i}|\A_i'| < (2^{-2j}|\A_j|)^{8/7}$, then we have that
 \[  2^{-4i} |\A_i'|^{5/2} 2^{-4j}|\A_j|^3  \ll (2^{-2j}|\A_j|)^{\frac{3}{2} \cdot \frac{8}{7}} 2^{-2j} |\A_j| \ll \n{\nu}_2^{2(1 + 12/7)} = \n{\nu}_2^{5 + 3/7}. \]
 As in the previous case, putting these bounds together with \eqref{ans}, one gets that 
 \[ E_{\sigma_{\nu}}(S_{i,j}) \ll \n{\nu}_2^{5 + 1/2} . \]
 
 Substituting the above along with \eqref{ert5}, \eqref{hu2} and \eqref{hu3} into \eqref{hu1}, we find that
 \[ E_{\sigma_{\nu}}({\rm Aff}(\A)) \ll J^8 (\n{\nu}_2^{5 + 2c_1} +\n{\nu}_2^{5 + 2/15} +   \n{\nu}_2^{5 + 1/2} + \n{\nu}_2^{16} ),\]
 whereupon, noting the fact that $J \ll \log (2/\n{\nu}_2^{16}) \ll \log (2/\n{\nu}_2) \ll \n{\nu}_2^{-c_1/100}$ and setting $c = \min\{c_1/2,1/15\}$ gives us
 \[ E_{\sigma_{\nu}}({\rm Aff}(\A)) \ll \n{\nu}_2^{5 + c}.\]
 Amalgamating this with \eqref{affen} finishes the proof of Theorem \ref{rsk24}.
\end{proof}

In the above proof, we employ the connection between $T(\mu_{\nu})$ and growth in groups in two key ways, the first being our usage of Lemma \ref{rudshk} and the second being that \eqref{affen} and \eqref{hu1} allow us to upper bound $T(\mu_{\nu})$ by incidence-type estimates over two asymmetric sets. This can be seen as an approximate version of the following speculative triangle inequality which stipulates that for any finitely--supported functions $\nu, \nu_1, \dots, \nu_r: \mathbb{R} \to [0,1]$ satisfying $\nu = \nu_1 + \dots + \nu_r$, one has \eqref{approxtriangle}.
% \begin{equation}  \label{triangle}
% \text{if} \ \ \nu = \sum_{i=1}^r \nu_i, \ \ \text{then} \ \   T(\mu_{\nu}) \ll r^{O(1)} \max_{1 \leq i \leq r} T(\mu_{\nu_i}) . 
% \end{equation}
While we are unable to prove such a result, if true, this would provide an alternative proof of Theorem \ref{rsk24} that circumvents Lemma \ref{rudshk} and employs results such as Corollary \ref{dve}, thus giving better quantitative exponents.

%---------------------------------------------------------------------------------------------------------------------------
%---------------------------------------------------------------------------------------------------------------------------
%---------------------------------------------------------------------------------------------------------------------------
%---------------------------------------------------------------------------------------------------------------------------
%---------------------------------------------------------------------------------------------------------------------------
%---------------------------------------------------------------------------------------------------------------------------

\bibliographystyle{amsbracket}

\begin{thebibliography}{18}

\bibitem{AV2009}
F. Amoroso, E. Viada, \emph{Small points on subvarieties of a torus,} Duke Math. J. \textbf{150} (2009), no. 3, 407--442.


\bibitem{BS1994}
A. Balog, E. Szemer\'{e}di, \emph{A statistical theorem of set addition}, Combinatorica \textbf{14} (1994), no.3, 263-268.



\bibitem{BW2017}
A. Balog, T. D. Wooley, \emph{A low-energy decomposition theorem}, Q. J. Math. \textbf{68} (2017), no. 1, 207-226.


% \bibitem{Be1983}
% J. Beck, \emph{On the lattice property of the plane and some problems of Dirac, Motzkin and Erd\H{o}s in combinatorial geometry}, Combinatorica \textbf{3} (1983), no.3-4, 281-297.


\bibitem{BC2004}
J. Bourgain, M. C. Chang, \emph{On the size of $k$-fold sum and product sets of integers}, J. Amer. Math. Soc.,
\textbf{17} (2004), no. 2, 473-497.


% \bibitem{BD2015}
% J. Bourgain, C. Demeter, \emph{The proof of the $l^2$ decoupling conjecture}, Ann. of Math. (2) \textbf{182} (2015),
% no. 1, 351-389.


% \bibitem{BKT2004}
% J. Bourgain, N. Katz, T. Tao, \emph{A sum-product estimate in finite fields, and applications}, Geom. Funct. Anal. \textbf{14} (2004), no.1, 27-57.



\bibitem{BSW2024}
T. Browning, W. Sawin, V. Y. Wang, \emph{Pairs of commuting integer matrices}, arXiv:2409.01920.


\bibitem{CM2025}
J. Chapman, A. Mudgal, \emph{On commuting integer matrices}, forthcoming.


\bibitem{Ch2003}
M. C. Chang, \emph{The Erd\H{o}s--Szemer\'{e}di problem on sum set and product set}, Ann. of Math. (2) \textbf{157} (2003), no. 3, 939-957.



\bibitem{Ch2009}
M. C. Chang, \emph{Some consequences of the polynomial Freiman-Ruzsa conjecture}, C. R. Math. Acad. Sci. Paris \textbf{347} (2009), no. 11-12, 583-588.



\bibitem{Eb2015}
S. Eberhard, \emph{Commuting probabilities of finite groups}, Bull. Lond. Math. Soc. \textbf{47} (2015), no. 5, 796--808.



\bibitem{ET1968}
P. Erd\H{o}s, P. Tur\'{a}n, \emph{On some problems of a statistical group-theory. IV}, Acta Math. Acad. Sci. Hungar. \textbf{19} (1968), 413--435.




\bibitem{ESS2002}
J. H. Evertse, H. P. Schlickewei, W. M. Schmidt, \emph{Linear equations in variables which lie in a multiplicative group},  Ann. of Math. (2) \textbf{155} (2002), no. 3, 807-836.


\bibitem{FF1960}
W. Feit, N. J. Fine, \emph{Pairs of commuting matrices over a finite field}, Duke Math. J. \textbf{27} (1960), 91-94.


% \bibitem{GG2006}
% R. M. Guralnick, G. R. Robinson, \emph{On the commuting probability in finite groups}, J. Algebra \textbf{300} (2006), no. 2, 509--528.


\bibitem{Gow1998}
W. T. Gowers, \emph{A new proof of Szemer\'edi's theorem for arithmetic progressions of length four}, Geom. Funct. Anal. \textbf{8} (1998), no. 3, 529-551. 


\bibitem{GGMT2023}
W. T. Gowers, B. Green, F. Manners, T. Tao, \emph{On a conjecture of Marton}, to appear in Ann. of Math. (2), arXiv:2311.05762.



\bibitem{GMT2023}
B. Green, F. Manners, T. Tao, \emph{Sumsets and entropy revisited}, to appear in Random Structures Algorithms, arXiv:2306.13403.


\bibitem{Gu1973}
W. H. Gustafson, \emph{What is the probability that two group elements commute?}, Amer. Math. Monthly  \textbf{80} (1973), 1031--1034.


% \bibitem{GK2015}
% L. Guth, N. H. Katz, \emph{On the Erd\H{o}s distinct distances problem in the plane}, Ann. of Math. (2) \textbf{181} (2015), no.1, 155-190.


% \bibitem{HP2021}
% B. Hanson, G. Petridis, \emph{A question of Bukh on sums of dilates}, Discrete Anal. \textbf{2021}, Paper No. 13, 21pp.


\bibitem{Hel2008}
H. A. Helfgott, \emph{Growth and generation in $\mathrm{SL}_2(\FF_p)$}, Ann. of Math. (2)  \textbf{167} (2008), no. 2, 601--623.


\bibitem{Lu2017}
B. Lund, \emph{Incidences and Extremal Problems on Finite Point Sets,} Thesis (Ph.D.)-Rutgers The State University of New Jersey - New Brunswick. 2017. 101 pp.



% \bibitem{MMS2020}
% N. Moshchevitin, B. Murphy, I. D. Shkredov, \emph{Popular products and continued fractions}, Israel J. Math. \textbf{238} (2020), no. 2, 807--835.




\bibitem{Mu2021}
A. Mudgal, \emph{Energy estimates in sum-product and convexity problems}, to appear in Amer. J. Math., arXiv:2109.04932.


\bibitem{Mu2023}
A. Mudgal, \emph{Diameter free estimates for the quadratic Vinogradov mean value theorem},  Proc. Lond. Math. Soc. (3) \textbf{126} (2023), no. 1, 76-128.


\bibitem{Mu2024a}
A. Mudgal, \emph{An Elekes--R\'{o}nyai theorem for sets with few products}, Int. Math. Res. Not. IMRN \textbf{2024} (2024), no. 13, 10410-10424


\bibitem{Mu2024b}
A. Mudgal, \emph{Unbounded expansion of polynomials and products}, Math. Ann. \textbf{390} (2024), no. 1, 381–415


\bibitem{Mur2021}
B. Murphy, \emph{Upper and lower bounds for rich lines in grids}, Amer. J. Math.  \textbf{143} (2021), no. 2, 577--611.


\bibitem{MRNS2015}
B. Murphy, O. Roche-Newton, I. Shkredov, \emph{Variations on the sum-product problem}, SIAM J. Discrete Math. \textbf{29} (2015), no.1, 514-540.


\bibitem{Ne1989}
P. M. Neumann, \emph{Two combinatorial problems in group theory}, Bull. London Math. Soc. \textbf{21} (1989), no. 5, 456--458.


\bibitem{PZ2021}
D. P\'{a}lv\"{o}lgyi, D. Zhelezov, \emph{Query complexity and the polynomial Freiman--Ruzsa conjecture}, Adv. Math.
\textbf{392} (2021), Paper No. 108043, 18 pp.


\bibitem{Pe2012}
G. Petridis, \emph{New proofs of Pl\"{u}nnecke--type estimates for product sets in groups}, Combinatorica \textbf{32}
(2012), no. 6, 721-733.




\bibitem{PR2022}
G. Petridis, O. Roche-Newton, M. Rudnev, A. Warren, \emph{An energy bound in the affine group}, Int. Math. Res. Not. IMRN \textbf{2022}, no. 2, 1154--1172.




\bibitem{RNR2015}
O. Roche-Newton, M. Rudnev, \emph{On the Minkowski distances and products of sum sets}, Israel J. Math. \textbf{209} (2015), no.2, 507--526.



% \bibitem{Ru2018}
% M. Rudnev, \emph{On the number of incidences between points and planes in three dimensions}, Combinatorica \textbf{38} (2018), no.1, 219-254.


\bibitem{RS2022}
M. Rudnev, I. D. Shkredov, \emph{On the growth rate in $\mathrm{SL}_2(\FF_p)$, the affine group and sum-product type implications}, Mathematika   \textbf{68} (2022), no. 3, 738--783.




% \bibitem{SdZ2017}
% S. Stevens, F. de Zeeuw, \emph{An improved point-line incidence bound over arbitrary fields}, Bull. Lond. Math. Soc. \textbf{49} (2017), no.5, 842-858.


\bibitem{Sch2015}
T. Schoen, \emph{New bounds in Balog--Szemer\'{e}di--Gowers theorem}, Combinatorica \textbf{35} (2015), no. 6, 695-701.


\bibitem{Shk2013}
I. Shkredov, \emph{Some new results on higher energies}, Trans. Moscow Math. Soc. 2013, 31-63.


% \bibitem{Shk2024}
% I. D. Shkredov, \emph{Additive dimension and the growth of sets}, Discrete Math. \textbf{347} (2024), no. 9, Paper No. 114077, 25 pp.


\bibitem{So2009}
J. Solymosi, \emph{Bounding multiplicative energy by the sumset}, Adv. Math. \textbf{222} (2009), no. 2, 402-408.


% \bibitem{Tao2015}
% T. Tao, \emph{Expansion in finite simple groups of Lie type}, Graduate Studies in Mathematics, 164. American Mathematical Society, Providence, RI, 2015. xiv+303 pp. ISBN: 978-1-4704-2196-0.


 \bibitem{TV2006}
 T. Tao, V. H. Vu, \emph{Additive combinatorics}, Cambridge Studies in Advanced Mathematics, \textbf{105}. Cambridge University Press, Cambridge, 2006.




% \bibitem{Xu2021}
% B. Xue, \emph{Asymmetric estimates and the sum-product problems}, Acta Arith. \textbf{198} (2021), no. 3, 289-311.
\end{thebibliography}
\providecommand{\bysame}{\leavevmode\hbox to3em{\hrulefill}\thinspace}

\end{document}